\theoremstyle{plain}
\newtheorem{theorem}{Theorem}[section]
\newtheorem{lemma}[theorem]{Lemma}
\theoremstyle{definition}
\newtheorem{remark}[theorem]{Remark}
\theoremstyle{plain}
\newtheorem{assumption}[theorem]{Assumption}
\definecolor{MyDarkGreen}{rgb}{0,0.45,0}
\newcommand{\RED}[1]{{\color{red}#1}}
\def\trait #1 #2 #3 {\vrule width #1pt height #2pt depth #3pt}
\def\fin{\hfill
        \trait .3 5 0
        \trait 5 .3 0
        \kern-5pt
        \trait 5 5 -4.7
        \trait 0.3 5 0
\medskip}
\newenvironment{proof}{\textit{Proof.}}{\fin}
\newcommand{\TERM}[2]{\big(\textsf{#1}_{#2}\big)}
\newcommand{\DOFS}[2]{(\textbf{#1}-\textbf{#2})}
\newcommand{\MESH}[1]{\mathcal{M}#1}
\newcommand{\INTP}{\footnotesize{I}}
\newcommand{\REAL}{\mathbbm{R}}
\newcommand{\restrict}[2]{{#1}_{|{#2}}}
\newcommand{\STACKON}[2]{\stackon{$#1$}{\footnotesize{#2}}}
\newcommand{\PGRAPH}[1]{\medskip\noindent\textbf{#1}.}
\newcommand{\EOD}{\end{document}}
\newcommand{\fv}{\mathbf{f}}
\newcommand{\nv}{\mathbf{n}}
\newcommand{\qv}{\mathbf{q}}
\newcommand{\uv}{\mathbf{u}}
\newcommand{\vv}{\mathbf{v}}
\newcommand{\wv}{\mathbf{w}}
\newcommand{\xv}{\mathbf{x}}
\newcommand{\yv}{\mathbf{y}}
\newcommand{\Vv}{\mathbf{V}}
\newcommand{\as}{a}
\newcommand{\bs}{b}
\newcommand{\hs}{h}
\newcommand{\ks}{k}
\newcommand{\ns}{n}
\newcommand{\ps}{p}
\newcommand{\qs}{q}
\renewcommand{\ss}{s}
\newcommand{\ts}{t}
\newcommand{\us}{u}
\newcommand{\vs}{v}
\newcommand{\xs}{x}
\newcommand{\ys}{y}
\newcommand{\Cs}{C}
\newcommand{\Qs}{Q}
\newcommand{\Ss}{S}
\newcommand{\Vs}{V}
\newcommand{\calM}{\mathcal{M}}
\newcommand{\calO}{\mathcal{O}}
\newcommand{\HONE}  {H^1}
\newcommand{\HONEzr}{H^1_0}
\newcommand{\HTWO}  {H^2}
\newcommand{\LTWO}  {L^2}
\newcommand{\LTWOzr}{L^2_0}
\newcommand{\HS}[1] {H^{#1}}
\newcommand{\CS}[1] {C^{#1}}
\newcommand{\PS}[1] {\mathbbm{P}_{#1}}
\newcommand{\Vvhk}{\Vv^{h}_{k}}
\newcommand{\Vvh}[1]{\Vv_{#1}^{h}}
\newcommand{\Qshkk}{\Qs^{\hh}_{k-1}}
\newcommand{\Vshkp}  {\Vs^{\FO,h}_{k}}
\newcommand{\Vvhkp}  {\Vv^{\FO,h}_{k}}
\newcommand{\Vvhkpp}{\Vv^{\FT,h}_{k}}
\newcommand{\Vvhpp}[1]{\Vv^{\FT,h}_{#1}}
\newcommand{\FO}{\textit{F1}}
\newcommand{\FT}{\textit{F2}}
\newcommand{\VvhkFO}{\Vv^{\FO,h}_{k}}
\newcommand{\VvhkFT}{\Vv^{\FT,h}_{k}}
\renewcommand{\P} {\textrm{E}}            
\newcommand  {\E} {\textrm{e}}
\newcommand  {\V} {\textrm{v}}            
\newcommand{\hh}{h}
\newcommand{\Th}{\Omega_{\hh}}
\newcommand{\xvP}{\xv_{\P}}        
\newcommand{\dims}{2}              
\newcommand{\hP}{\hh_{\P}}
\newcommand{\hE}{\hh_{\E}}
\newcommand{\mP}{\ABS{\P}}
\newcommand{\mE}{\ABS{\E}}
\newcommand{\Eset}{\mathcal{E}}    
\newcommand{\Vset}{\mathcal{V}}    
\newcommand{\NMB}{N}
\newcommand{\NPV}{\NMB^{\Vset}_{\P}}      
\newcommand{\NPE}{\NMB^{\Eset}_{\P}}      
\newcommand{\dV}{\,d\xv}
\newcommand{\dS}{\,ds}
\newcommand{\ds}{\,ds} 
\newcommand{\DIV}  {\text{div}\,}
\newcommand{\norE} {\mathbf{n}_{\E}}
\newcommand{\norPE}{\mathbf{n}_{\P,\E}}
\newcommand{\norEx} {\ns_{\E,x}}
\newcommand{\norEy} {\ns_{\E,y}}
\newcommand{\tngE} {\mathbf{t}_{\E}}
\newcommand{\X}   {\mathbf{x}}
\newcommand{\xV}{\X_{\V}}
\newcommand{\bil}[2]{\langle#1,#2\rangle}
\newcommand{\scal}  [2]{(#1,#2)}
\newcommand{\abs}   [1]{|#1|}
\newcommand{\ABS}   [1]{\left|#1\right|}
\newcommand{\snorm}  [2]{|#1|_{#2}}
\newcommand{\norm}  [2]{||#1||_{#2}}
\newcommand{\Piz}[1]{\Pi^{0}_{#1}}
\newcommand{\PinP}[1]{\Pi^{\nabla,\P}_{#1}}
\newcommand{\PizP}[1]{\Pi^{0,\P}_{#1}}
\newcommand{\PiF} {\pi_{F}}
\newcommand{\Pio}{\pi_{1}}
\newcommand{\PioP}{\pi_{1}^{\P}}
\newcommand{\Pit}{\pi_{2}}
\newcommand{\PitP}{\pi_{2}^{\P}}
\newcommand{\SPh}{\Ss^{\P}_{\hh}}
\newcommand{\ash}{\as_{\hh}}
\newcommand{\bsh}{\bs_{\hh}}
\newcommand{\asPh}{\as^{\P}_{\hh}}
\newcommand{\bsPh}{\bs^{\P}_{\hh}}
\newcommand{\asP}{\as^{\P}}
\newcommand{\bsP}{\bs^{\P}}
\newcommand{\psh}{p_{\hh}}
\newcommand{\psI}{p_{\INTP}}
\newcommand{\qsh}{\qs_{\hh}}
\newcommand{\qsI} {\qs_{\INTP}}
\newcommand{\vsh} {\vs_{\hh}}
\newcommand{\uvh} {\uv_{\hh}}
\newcommand{\uvI} {\uv_{\INTP}}
\newcommand{\vvh} {\vv_{\hh}}
\newcommand{\vvI} {\vv_{\INTP}}
\newcommand{\vshx} {\vs_{\hh,x}}
\newcommand{\vshy} {\vs_{\hh,y}}
\newcommand{\wvh} {\wv_{\hh}}
\newcommand{\fvh} {\fv_{\hh}}
\newcommand{\qvh} {\qv_{\hh}}
\newcommand{\xVp} {\xV^{\prime}}
\newcommand{\xVpp}{\xV^{\prime\prime}}
\newcommand{\btauh}{\bm\tau_{\hh}}
\newcommand{\kb}{\bar{k}}
\newcommand{\dvh}{{\bm\delta}_{\hh}}
\newcommand{\ssh}{\sigma_{\hh}}
\newcommand{\psiv}{{\bm\Psi}}
\newcommand{\phis}{\varphi}
\newcommand{\psivI}{{\bm\Psi}_{\INTP}}
\newcommand{\phisI}{\varphi_{\INTP}}
\newif\ifARXIV
\begin{document}
%
\begin{frontmatter} 
  \title{Conforming virtual element approximations of the
    two-dimensional Stokes problem}

  \author[IMATI] {Gianmarco Manzini}
  \author[DICEA]{and Annamaria Mazzia}

  \address[IMATI]{
    Istituto di Matematica Applicata e Tecnologie Informatiche,
    Consiglio Nazionale \\delle Ricerche,
    via Ferrata 1,
    27100 Pavia,
  }

    
  \address[DICEA]{Dipartimento di Ingegneria Civile, Edile e Ambientale - ICEA,
    Universit\`a di Padova, 35131 Padova, Italy}
  
  \begin{abstract}
    The virtual element method (VEM) is a Galerkin approximation
    method that extends the finite element method to polytopal meshes.
    In this paper, we present two different conforming virtual element
    formulations for the numerical approximation of the Stokes problem
    that work on polygonal meshes.
    The velocity vector field is approximated in the virtual element
    spaces of the two formulations, while the pressure variable is
    approximated through discontinuous polynomials.
    Both formulations are inf-sup stable and convergent with optimal
    convergence rates in the $\LTWO$ and energy norm.
    We assess the effectiveness of these numerical approximations by
    investigating their behavior on a representative benchmark
    problem.
    The observed convergence rates are in accordance with the
    theoretical expectations and a weak form of the zero-divergence
    constraint is satisfied at the machine precision level.
  \end{abstract}
  
  \begin{keyword}
    Incompressible two-dimensional Stokes equation,
    virtual element method,
    enhanced formulation,
    error analysis.\\
    \emph{2020 Mathematics Subject Classification: Primary: 65M60, 65N30; Secondary: 65M22.}
  \end{keyword}

\end{frontmatter}

\renewcommand{\arraystretch}{1.}
\raggedbottom



\section{Introduction}
\label{sec:intro}

Many physical phenomena in physics and engineering can be modeled by
the Stokes flow~\cite{Galdi:2011}.
Noteworthy applications are, for example, Stokes flows in porous
media~\cite{Bang-Lukkassen:1999}, design and development of efficient
fibrous filters~\cite{Linden-Cheng-Wiegmann:2018} and micro-fluid
devices~\cite{Smith-Barbati-Santana-Gleghon-Kirby:2012}, dynamics of
droplets~\cite{Kitahata-Yoshinaga-Nagai-Sumino:2013}, bio-suspensions
and sedimentation~\cite{Hofer:2018}.
A very successful approach for the numerical treatment of the Stokes
equations in variational form is based on the finite element method
(FEM)~\cite{Cai-Tong-Vassilevski-Wang:2010,Crouzeix-Raviart:1973,Girault-Raviart:1986}.
The FEM normally uses triangular and quadrilateral meshes in the
two-dimensional (2D) case and tetrahedral and hexahedral meshes in the
three-dimensional case (3-D).
Furthermore, in the last two decades a great effort has been devoted
in the design of numerical methods for partial differential equations
(PDEs) suitable to polygonal and polyhedral
meshes~\cite{Wachspress:2015,
  Kuznetsov-Repin:2003,
  Sukumar-Tabarraei:2004,
  BeiraodaVeiga-Lipnikov-Manzini:2014}.
To this end, it is worth mentioning the mimetic finite the difference
(MFD)
method~\cite{Lipnikov-Manzini-Shashkov:2014,BeiraodaVeiga-Lipnikov-Manzini:2014}
and its variational reformulation that led to the virtual element
method
(VEM)~\cite{BeiraodaVeiga-Brezzi-Cangiani-Manzini-Marini-Russo:2013}.
The MFD was designed to preserve several fundamental properties of
PDEs, such as the maximum/minimum principle, the conservation of
fundamental quantities in physics (mass, momentum, energy) and the
solution symmetries.
The MFD method was successfully applied to the numerical approximation
on unstructured polygonal and polyhedral meshes of diffusion
problems~\cite{Brezzi-Lipnikov-Shashkov-Simoncini:2007,Brezzi-Lipnikov-Shashkov:2006},
convection–diffusion problems~\cite{Cangiani-Manzini-Russo:2009},
elasticity problems~\cite{Lipnikov-Morel-Shashkov:2004},
gas dynamic problems~\cite{Campbell-Shashkov:2001},
and electromagnetic problems~\cite{Hyman-Shashkov:2001}.
On the other hand, the VEM is a finite element method that does not
require the explicit knowledge of the basis functions and use of
quadrature formulas to compute the bilinear forms of the Galerkin
formulation.
Indeed, the VEM can handle the construction of the bilinear forms on
general polygonal and polyhedral elements through special polynomial
projections of the basis functions and their derivatives (gradients,
curl, divergence).
Such projections are computable from the degrees of freedom of the
virtual element functions and ensure the polynomial consistency of the
bilinear forms.
The connection between the VEM and the FEM on polygonal/polyhedral
meshes is thoroughly investigated
in~\cite{Manzini-Russo-Sukumar:2014,Cangiani-Manzini-Russo-Sukumar:2015,DiPietro-Droniou-Manzini:2018},
between VEM and discontinuous skeletal gradient discretizations
in~\cite{DiPietro-Droniou-Manzini:2018}, and between the VEM and the
BEM-based FEM method
in~\cite{Cangiani-Gyrya-Manzini-Sutton:2017:GBC:chbook}.

The VEM was originally formulated
in~\cite{BeiraodaVeiga-Brezzi-Cangiani-Manzini-Marini-Russo:2013} as a
conforming FEM for the Poisson problem.
Then, it was later extended to convection-reaction-diffusion problems
with variable coefficients
in~\cite{Ahmad-Alsaedi-Brezzi-Marini-Russo:2013,BeiraodaVeiga-Brezzi-Marini-Russo:2016b}.
Meanwhile, the nonconforming formulation for diffusion problems was
proposed in~\cite{AyusodeDios-Lipnikov-Manzini:2016} as the finite
element reformulation of~\cite{Lipnikov-Manzini:2014}.
Mixed VEM for elliptic problems were introduced
in~\cite{Brezzi-Falk-Marini:2014}, and later extended to meshes with
curved edges in~\cite{Dassi-Fumagalli-Losapio-Scialo-Scotti-Vacca:2020a}.
Implementation of mixed methods is discussed
in~\cite{Dassi-Vacca:2019,Dassi-Scacchi:2020,Dassi-Fumagalli-Losapio-Scialo-Scotti-Vacca:2020b}.

The connection with de~Rham diagrams and Nedelec elements and the
application to the electromagnetics has been explored
in~\cite{BeiraodaVeiga-Brezzi-Marini-Russo:2016a}.
A practical application of these concepts can be found in
\cite{BeiraodaVeiga-Dassi-Manzini-Mascotto:2021,NaranjoAlvarez-Bokil-Gyrya-Manzini:2020}.
Other significant applications of the VEM on general meshes are found,
for example, in~\cite{%
  Antonietti-Manzini-Verani:2018,%
  Antonietti-Manzini-Verani:2019:CAMWA:journal,%
  Brezzi-Marini:2013,%
  BeiraodaVeiga-Manzini:2014,%
  BeiraodaVeiga-Manzini:2015,%
  BeiraodaVeiga-Mora-Vacca:2019,%
  BeiraodaVeiga-Manzini-Mascotto:2019,%
  Benedetto-Berrone-Pieraccini-Scialo:2014,%
  Benedetto-Berrone-Borio-Pieraccini-Scialo:2016b,%
  Benedetto-Berrone-Scialo:2016,%
  Benvenuti-Chiozzi-Manzini-Sukumar:2019:CMAME:journal,%
  Berrone-Borio-Scialo:2016,%
  Berrone-Pieraccini-Scialo:2016,%
  Berrone-Borio-Manzini:2018:CMAME:journal,%
  Cangiani-Gyrya-Manzini:2016,%
  Cangiani-Georgoulis-Pryer-Sutton:2016,%
  Cangiani-Manzini-Sutton:2017,%
  Certik-Gardini-Manzini-Vacca:2018:ApplMath:journal,
  Certik-Gardini-Manzini-Mascotto-Vacca:2019:CAMWA:journal,%
  Gardini-Manzini-Vacca:2019:M2AN:journal,%
  Mora-Rivera-Rodriguez:2015,%
  Natarajan-Bordas-Ooi:2015,%
  Paulino-Gain:2015,%
  Perugia-Pietra-Russo:2016,%
  Wriggers-Rust-Reddy:2016,%
  Zhao-Chen-Zhang:2016%
}.

In this work, we consider two possible numerical formulations of the
VEM for the discretization of the two-dimensional (2D) Stokes
equation.
In both formulation, we approximate the two components of the velocity
vector separately by using a variant of the conforming virtual element
space originally proposed
in~\cite{BeiraodaVeiga-Brezzi-Cangiani-Manzini-Marini-Russo:2013} and
already considered in~\cite{Manzini-Mazzia:2021}.
In the first formulation we assume that the edge trace of each
component of the velocity is a polynomial of degree $k+1$, where $k$
is the maximum degree of the polynomials that are in the virtual
element space.
This definition of the scalar virtual element space is a special case
of the \emph{generalized local virtual element space} that is proposed
in~\cite[Section~3]{BeiraodaVeiga-Vacca:2020-arXiv}.
In the second formulation, we assume that only the trace of the normal
component of the velocity vector is a polynomial of degree $k+1$,
while the trace of the tangential component is a polynomial of degree
$k$.
For both formulations, we also consider the modified (``enhanced'')
definition of the virtual element
space~\cite{Ahmad-Alsaedi-Brezzi-Marini-Russo:2013}, which allows us
to construct the $\LTWO$ orthogonal projection onto the polynomials of
degree $k$.
In both formulations, the scalar unknown, e.g., the pressure, is
approximated by discontinuous polynomials on the mesh elements.
These two virtual element formulations satisfy the inf-sup stability
condition, which is crucial to prove the well-posedness of the method,
and can be proved to have an optimal convergence rate for the
approximation errors in the $\LTWO$ norm and in the $\HONE$-seminorm.
A similar approach for the incompressible Stokes equations led to the
low-order accurate MFD methods
in~\cite{BeiraodaVeiga-Gyrya-Lipnikov-Manzini:2009,BeiraodaVeiga-Lipnikov:2010},
that are equivalent to the formulations proposed in our work for
$k=1$.

All our numerical experiments confirm the expected optimal behavior of
these two formulations, whose accuracy is comparable, although the
second formulation requires less degrees of freedom than the first
one.
The zero divergence constraint is satisfied in a variational sense,
i.e., the projection of the divergence on the subset of polynomials
used in the scheme formulation is zero.
It is worth mentioning that other virtual element approaches were
recently proposed in the literature that approximate the Stokes
velocity in such a way that its divergence is a polynomial that is set
to zero in the scheme.
This strategy provides an approximation of the Stokes velocity that
satisfies the zero divergence constraint in a pointwise sense.
We refer the interested reader to the works of References~\cite{
  BeiraodaVeiga-Lovadina-Vacca:2017,
  BeiraodaVeiga-Lovadina-Vacca:2018,
  BeiraodaVeiga-Mora-Vacca:2019,
  BeiraodaVeiga-Dassi-Vacca:2020,
  Chernov-Marcati-Mascotto:2021}.
However, the polynomial projection of the velocity divergence in our
VEM is zero up to the machine precision, so if we consider such
projection as the virtual element approximation of the velocity
divergence, this approximation is identically zero almost everywhere
in the computational domain.

\subsection{Structure of the paper}
The outline of the paper is as follows.
In Section~\ref{sec:Stokes}, we introduce the Stokes problem.
In Section~\ref{sec:VEM}, we discuss two different virtual element
formulations for numerically solving this problem.
In Section~\ref{sec:convergence}, we investigate the convergence of
these formulations theoretically, and derive optimal convergence rates
in the energy and $\LTWO$ norms for the velocity approximation and in
the $\LTWO$ norm for the pressure approximation.
In Section~\ref{sec:numerical}, we assess the accuracy of
these virtual element approximations by investigating their behavior
on a  representative benchmark problem.
In Section~\ref{sec:conclusions}, we offer our final conclusions.


\subsection{Notation and technicalities}
\label{subsec:notation}
We use the standard definition and notation of Sobolev spaces, norms
and seminorms, cf.~\cite{Adams-Fournier:2003}.
Let $k$ be a nonnegative integer number.
The Sobolev space $\HS{k}(\omega)$ consists of all square integrable
functions with all square integrable weak derivatives up to order $k$
that are defined on the open, bounded, connected subset $\omega$ of
$\REAL^{2}$.
As usual, if $k=0$, we prefer the notation $\LTWO(\omega)$.
We will also use the subspace of $\LTWO(\Omega)$ denoted by
$\LTWOzr(\Omega)$ and defined on the computational domain $\Omega$ as
\begin{align}
  \LTWOzr(\Omega):=\bigg\{\,\qs\in\LTWO(\Omega)\,:\,\int_{\Omega}\qs\dV=0\,\bigg\}.
\end{align}
Norm and seminorm in $\HS{k}(\omega)$ are denoted by
$\norm{\cdot}{k,\omega}$ and $\snorm{\cdot}{k,\omega}$, respectively.
We use the integral notation to denote the $\LTWO$-inner product
between vector-valued fields, although for notation's conciseness, we
may prefer to use the notation ``$(\cdot,\cdot)$'' in a few situations.


\subsection{Mesh definition and regularity assumptions}
\label{subsec:mesh:regularity:assumptions}
For exposition's sake, we consider an open, bounded, polygonal domain
$\Omega$ and a family of mesh decompositions of $\Omega$ denoted by
$\mathcal{T}=\{\Th\}_{\hh}$.
Each mesh $\Th$ is a set of non-overlapping, bounded (closed) elements
$\P$ such that $\overline{\Omega}=\cup_{\P\in\Th}\P$, where
$\overline{\Omega}$ is the closure of $\Omega$ in $\REAL^2$.
The subindex $\hh$, which labels each mesh $\Th$, is the maximum of
the diameters $\hP=\sup_{\xv,\yv\in\P}\abs{\xv-\yv}$.
Each element $\P$ has a non-intersecting polygonal boundary
$\partial\P$ formed by $\NPE$ straight edges $\E$ connecting the
$\NPV$ ($=\NPE$) polygonal vertices.
The sequence of vertices forming $\partial\P$ is oriented in the
counter-clockwise direction and the vertex coordinates are denoted by
$\xV=(\xs_{\V},\ys_{\V})$.
We denote the measure of $\P$ by $\mP$, its barycenter (center of
gravity) by $\xvP:=(\xs_{\P},\ys_{\P})$, the unit normal vector to
each edge $\E\in\partial\P$ and pointing out of $\P$ by $\norPE$, and
the length of $\E$ by $\hE$.
Moreover, we assume that the orientation of the mesh edges in every
mesh is fixed \emph{once and for all}, so that we can unambiguously
introduce $\norE$, the unit normal vector to edge $\E$.
The orientation of this vector is independent of the element $\P$ to
which $\E$ belongs, and may differ from $\norPE$ only by the
multiplicative factor $-1$.

\PGRAPH{Mesh regularity assumptions}
%
In the definition of the admissible meshes, we first assume that the
elemental boundaries are ``polylines'', i.e., continuously connected
portions of straight lines.
Then, we need the following regularity assumptions on the family of
mesh decompositions $\{\Th\}_{\hh}$ in order to use the interpolation
and projection error estimates from the theory of polynomial
approximation of functions in Sobolev
spaces~\cite{Brenner-Scott:1994}.

\medskip
\begin{assumption}[Mesh regularity]~\\
  \vspace{-\baselineskip}
  \begin{itemize}
  \item There exists a positive constant $\varrho$
    independent of $\hh$ such that for every polygonal element $\P$ it
    holds that
    \begin{description}
    \item[]\textbf{(M1)}~~$\P$ is star-shaped with respect to a disk  with radius $\ge\varrho\hP$;
    \item[]\textbf{(M2)}~~for every edge $\E\in\partial\P$ it holds that
      $\hE\geq\varrho\hP$.
    \end{description}
  \end{itemize}
\end{assumption}

\medskip
\begin{remark}
  The star-shapedness property \textbf{(M1)} implies that all the mesh
  elements are \emph{simply connected} subsets of $\REAL^{2}$.
  The scaling property \textbf{(M2)} implies that the number of edges
  in all the elemental boundaries is uniformly bounded from above over
  the whole mesh family $\{\Th\}_{\hh}$.
\end{remark}

\medskip
These mesh assumptions are quite general and, as observed from the
very first publication on the VEM, see, for example,
\cite{BeiraodaVeiga-Brezzi-Cangiani-Manzini-Marini-Russo:2013}, allow
the method a great flexibility in the geometric shape of the mesh
elements.
For example, we can consider elements with hanging nodes as in the
adaptive mesh refinement (AMR) technique and elements with a
non-convex shape.
In this work we avoid elements with intersecting boundaries, elements
with ``holes'', and elements totally surrounding other elements.
However, elements with such more challenging shapes have already been
considered in the virtual element formulation to show the robustness
of the method~\cite{Paulino-Gain:2015}.
A recent review of the mesh regularity assumptions in the VEM
literature and a thorough investigation of the VEM performance on mesh
families with extreme characteristics can also be found
in~\cite{Sorgente-Biasotti-Manzini-Spagnuolo:2021:arXiv,Sorgente-VEM-Book:2021}.

\subsection{Polynomials}

Hereafter,
$\PS{\ell}(\P)$ denotes the linear space of polynomials of degree up
to $\ell$ defined on $\P$, with the useful convention that
$\PS{-1}(\P)=\{0\}$;
$\big[\PS{\ell}(\P)\big]^2$ denotes the space of two-dimensional
vector-valued fields of polynomials of degree up to $\ell$ on $\P$;
$\big[\PS{\ell}(\P)\big]^{2\times2}$ denotes the space of
$2\times2$-sized tensor-valued fields of polynomials of degree up to
$\ell$ on $\P$.
Similar definitions also hold for the space of univariate polynomials
defined on all mesh edges $\E$.
Then, we define the linear space of discontinuous scalar, vector and
tensor polynomial fields by collecting together the local definitions,
so that
\begin{align*}
  \PS{\ell}(\Th)&:=\Big\{
  \qs\in\LTWO(\Omega)\,:\,\restrict{\qs}{\P}\in\PS{\ell}(\P)\quad\forall\P\in\Th
  \Big\},\\[1em]
  \big[\PS{\ell}(\Th)\big]^2&:=
  \Big\{
  \qv\in\big[\LTWO(\Omega)\big]^2\,:\,\restrict{\qv}{\P}\in\big[\PS{\ell}(\P)\big]^2
  \quad\forall\P\in\Th
  \Big\},\\[1em]
  \big[\PS{\ell}(\Th)\big]^{2\times2}&:=
  \Big\{
  \bm{\kappa}\in\big[\LTWO(\Omega)\big]^{2\times2}\,:\,
  \restrict{{\bm\kappa}}{\P}\in\big[\PS{\ell}(\P)\big]^{2\times2}
  \quad\forall\P\in\Th  
  \Big\}.
\end{align*}
We will also use the norm and seminorm:
\begin{align}
  \norm{\vv}{1,\hh}^2=\norm{\vv}{0,\Omega}^2+\snorm{\vv}{1,\hh}^2
  \quad\textrm{with}\quad
  \snorm{\vv}{1,\hh}^2 =
  \sum_{\P\in\Th}\snorm{\vv}{1,\P}^2
  \label{eq:broken:seminorm}
\end{align}
for every function $\vv$ defined in the broken Sobolev space
\begin{align*}
  \big[\HONE(\Th)\big]^2 =
  \Big\{\vv\in\big[\LTWO(\Omega)\big]^2\,:\,\restrict{\vv}{\P}\in\big[\HONE(\P)\big]^2\quad\forall\P\in\Th\Big\},
\end{align*}
which is the space of square integrable vector-valued functions whose
restriction to every mesh element $\P$ is in $\big[\HONE(\P)\big]^2$.

Space $\PS{\ell}(\P)$ is the span of the finite set of \emph{scaled
monomials of degree up to $\ell$}, that are given by
\begin{align*}
  \calM_{\ell}(\P) =
  \bigg\{\,
  \left( \frac{\xv-\xvP}{\hP} \right)^{\alpha}
  \textrm{~with~}\abs{\alpha}\leq\ell
  \,\bigg\},
\end{align*}
where 
\begin{itemize}
\item $\xvP$ denotes the center of gravity of $\P$ and
  $\hP$ its characteristic length, as, for instance, the edge
  length or the cell diameter;
\item $\alpha=(\alpha_1,\alpha_2)$ is the two-dimensional multi-index
  of nonnegative integers $\alpha_i$ with degree
  $\abs{\alpha}=\alpha_1+\alpha_{2}\leq\ell$ and such that
  $\xv^{\alpha}=\xs_1^{\alpha_1}\xs_{2}^{\alpha_{2}}$ for any
  $\xv\in\REAL^{2}$ and
  $\partial^{\abs{\alpha}}\slash{\partial\xv^{\alpha}}=\partial^{\abs{\alpha}}\slash{\partial\xs_1^{\alpha_1}\partial\xs_2^{\alpha_2}}$.
\end{itemize}
The dimension of $\PS{\ell}(\P)$ equals
$N_{\ell}=(\ell+1)(\ell+2)/2$, the cardinality of the basis set
$\calM_{\ell}(\P)$.

Let $\vs$ and $\vv=(\vs_x,\vs_y)^T$ denote a (smooth enough) scalar
and vector-valued field.
Then, 
\begin{itemize}

\item the elliptic projection $\PinP{\ell}\vs\in\PS{\ell}(\P)$ is
  the solution of the variational problem
  \begin{align}
    \int_{\P}\nabla\big(\vs-\PinP{\ell}\vs\big)\cdot\nabla\qs\dV &= 0 \qquad\forall\qs\in\PS{\ell}(\P),\\[0.5em]
    \int_{\partial\P}\big(\vs-\PinP{\ell}\vs\big)\dS&=0;
    \label{eq:proj:H1:P:def}
  \end{align}

\item the orthogonal projection $\PizP{\ell}\vs\in\PS{\ell}(\P)$ is
  the solution of the variational problem
  \begin{align}
    \int_{\P}\big(\vs-\PizP{\ell}\vs\big)\qs\dV = 0
    \qquad\forall\qs\in\PS{\ell}(\P);
    \label{eq:scalar:proj:L2:P:def}
  \end{align}
  
\item the orthogonal projection of a vector-valued field
  $\vv=(\vs_x,\vs_y)^T$
  is the solution of the variational problem
  \begin{align}
    \int_{\P}\big(\vv-\PizP{\ell}\vv\big)\cdot\qv\dV = 0
    \qquad\forall\qv\in\big[\PS{\ell}(\P)\big]^2,
    \label{eq:vector:proj:L2:P:def}
  \end{align}
  and can be computed componentwisely, i.e.,
  $\PizP{\ell}\vv=(\PizP{\ell}\vs_x,\PizP{\ell}\vs_y)^T\in\big[\PS{\ell}(\P)\big]^2$,
  where $\PizP{\ell}\vs_x$ and $\PizP{\ell}\vs_y$ are the scalar
  orthogonal projections defined above;
  
\item the gradient of vector $\vv$ and its orthogonal projection
  $\PizP{\ell}\nabla\vv\in\big[\PS{\ell}(\P)\big]^{2\times2}$ onto the
  linear space of $2\times2$-sized matrix-valued polynomials of degree
  $\ell$, which are defined componentwisely as follows:
  \begin{align}
    \nabla\vv =
    \left(
    \begin{array}{cc}
      \frac{\partial\vs_x}{\partial\xs} & \quad\frac{\partial\vs_x}{\partial\ys}\\[1.0em]
      \frac{\partial\vs_y}{\partial\xs} & \quad\frac{\partial\vs_y}{\partial\ys}\\
    \end{array}
    \right)
    \qquad\textrm{and}\qquad
    \PizP{\ell}\nabla\vv =
    \left(
    \begin{array}{cc}
      \PizP{\ell}\frac{\partial\vs_x}{\partial\xs} & \quad\PizP{\ell}\frac{\partial\vs_x}{\partial\ys}\\[1.0em]
      \PizP{\ell}\frac{\partial\vs_y}{\partial\xs} & \quad\PizP{\ell}\frac{\partial\vs_y}{\partial\ys}\\
    \end{array}
    \right),
  \end{align}
  and this latter one is the solution of the variational problem:
  \begin{align}
    \int_{\P}\big(\nabla\vv-\PizP{\ell}\nabla\vv\big):\bm\kappa\dV = 0
    \qquad\forall{\bm\kappa}\in\big[\PS{\ell}(\P)\big]^{2\times2}.
    \label{eq:tensor:proj:L2:P:def}
  \end{align}
\end{itemize}

\section{The Stokes problem and the virtual element discretization}
\label{sec:Stokes}

The incompressible Stokes problem for the vector-valued field $\uv$
and the scalar field $\ps$ is governed by the system of equations:
\begin{align}
  - \Delta\uv +\nabla\ps &= \fv \phantom{0}    \quad\textrm{in~}\Omega,\label{eq:stokes:A}\\[0.2em]
  \DIV\uv                &= 0   \phantom{\fv}  \quad\textrm{in~}\Omega,\label{eq:stokes:B}\\[0.2em]
  \uv                    &= 0   \phantom{\fv}  \quad\textrm{on~}\Gamma\label{eq:stokes:C}
\end{align}
on the computational domain $\Omega$ with boundary $\Gamma$.
We refer to $\uv$ and $\ps$ as the \emph{Stokes velocity} and the
\emph{Stokes pressure}.
To ease the exposition, we consider only the case of homogeneous
Dirichlet boundary conditions, see~\eqref{eq:stokes:C}.
However, the extension to nonhomogeneous Dirichlet boundary conditions
is deemed straightforward and the general case is considered in the
section of numerical experiments.

\medskip
\noindent
The variational formulation of \eqref{eq:stokes:A}-\eqref{eq:stokes:C}
reads as: \emph{Find
  $(\uv,\ps)\in\big[\HONEzr(\Omega)\big]^2\times\LTWOzr(\Omega)$} such
that
\begin{align}
  \as(\uv,\vv) + \bs(\vv,\ps) &= (\fv,\vv) \phantom{0}        \qquad\forall\vv\in\big[\HONEzr(\Omega)\big]^2,\label{eq:stokes:var:A}\\[0.5em]
  \bs(\uv,\qs)                &= 0         \phantom{(\fv,\vv)}\qquad\forall\qs\in\LTWOzr(\Omega),            \label{eq:stokes:var:B}
\end{align}
where the bilinear forms
$\as(\cdot,\cdot):\big[\HONE(\Omega)\big]^2\times\big[\HONE(\Omega)\big]^2\to\REAL$ and
$\bs(\cdot,\cdot):\big[\HONE(\Omega)\big]^2\times\LTWO(\Omega)\to\REAL$ are
\begin{align}
  \as(\vv,\wv)&:=\int_{\Omega}\nabla\vv:\nabla\wv\dV
  \phantom{\int_{\Omega}\DIV\vv\,\qs\dV}\hspace{-1.25cm}
  \forall\vv,\wv\in\HONE(\Omega),
  \label{eq:as:def}\\[0.5em]
  \bs(\vv,\qs)&:=-\int_{\Omega}\qs\DIV\vv\dV
  \phantom{\int_{\Omega}\nabla\vv:\nabla\wv\dV}\hspace{-1.25cm}
  \forall\vv\in\HONE(\Omega),\,\qs\in\LTWO(\Omega).
  \label{eq:bs:def}
\end{align}
In the following section, it will be convenient to split these
bilinear forms on the mesh elements by rewriting them in the following
way:
\begin{align}
  \as(\vv,\wv)&=\sum_{\P\in\Th}\asP(\vv,\wv)\quad\textrm{with}\quad
  \asP(\vv,\wv)=\int_{\P}\nabla\vv:\nabla\wv\dV,
  \label{eq:asP:def}\\[0.5em]
  \bs(\vv,\qs)&=\sum_{\P\in\Th}\bsP(\vv,\qs)\quad\textrm{with}\quad
  \bsP(\vv,\qs)=-\int_{\P}\qs\DIV\vv\dV.
  \label{eq:bsP:def}
\end{align}

The bilinear form $\as(\cdot,\cdot)$ is continuous and coercive.
The bilinear form $\bs(\cdot,\cdot)$ is continuous and satisfies the
inf-sup condition:
\begin{align}
  \inf_{\qs\in\LTWOzr(\Omega)\backslash\{0\}}\sup_{\vv\in[\HONEzr(\Omega)\backslash\{0\}]^2}\frac{ \bs(\vv,\qs) }{ \norm{\vv}{1,\Omega}\,\norm{\qs}{0,\Omega} }\geq\beta,
  \label{eq:exact:inf-sup}
\end{align}
for some real, strictly positive constant $\beta$.
These properties imply the existence and uniqueness of the solution
pair $(\uv,\ps)$, and, so, the well-posedness of the variational
formulation~\eqref{eq:stokes:var:A}-\eqref{eq:stokes:var:B}, and the
stability inequality
\begin{align*}
  \norm{\uv}{1,\Omega} + \norm{\ps}{0,\Omega}
  \leq C\norm{\fv}{-1,\Omega},
\end{align*}
for a right-hand side forcing term $\fv\in\HS{-1}(\Omega)$, and a
constant $C$ that depends only on $\Omega$,
cf.~\cite{Boffi-Brezzi-Fortin:2013,Girault-Raviart:1986,Girault-Raviart:1979}.

\medskip
Let $k\geq1$ be a given integer number.
Our virtual element discretizations have the general abstract form:
\emph{Find
  $(\uvh,\psh)\in\Vvhk\times\Qshkk$}
\begin{align}
  \ash(\uvh,\vvh) + \bsh(\vvh,\psh) &= \bil{\fvh}{\vvh} \phantom{0}               \qquad\forall\vvh\in\Vvhk, \label{eq:stokes:vem:A}\\[0.5em]
  \bsh(\uvh,\qsh)                   &= 0                \phantom{\bil{\fvh}{\vvh}}\qquad\forall\qsh\in\Qshkk.\label{eq:stokes:vem:B}
\end{align}
Here, $\Vvhk$ is a finite-dimensional conforming subspace of
$\big[\HONEzr(\Omega)\big]^2$ and $\Qshkk$ a finite-dimensional
discontinuous subspace of $\LTWOzr(\Omega)$.
We use the integer $k$, which is a polynomial degree, to denote the
accuracy of the method.
The vector field $\uvh$ and the scalar field $\psh$ are the virtual
element approximation of $\uv$ and $\ps$, respectively.
The bilinear forms $\ash(\cdot,\cdot):\Vvhk\times\Vvhk\to\REAL$ and
$\bsh(\cdot,\cdot):\Vvhk\times\Qshkk\to\REAL$ are the virtual element
approximations to the corresponding bilinear forms $\as(\cdot,\cdot)$
and $\bs(\cdot,\cdot)$.
The linear functional $\bil{\fvh}{\cdot}$ is the virtual element
approximation of the right-hand side of~\eqref{eq:stokes:var:A}.
The definition of all these mathematical objects is discussed in the
next section, where we present, analyze and investigate numerically
two new virtual element formulations that are suitable to polygonal
meshes.

\section{Virtual element approximations of the Stokes problem}
\label{sec:VEM}

We present two different virtual element approximations of the 2-D
Stokes problem in variational form.
For both formulations, the Stokes pressure is approximated by a
piecewise polynomial function that belongs to the space
\begin{align}
  \Qshkk&:=\Big\{\qsh\in\LTWOzr(\Omega)\,:\,\restrict{\qsh}{\P}\in\PS{k-1}(\P)\quad\forall\P\in\Th\Big\}=\PS{k-1}(\Th)\cap\LTWOzr(\Omega),\label{eq:SV:scalar:space:def}
\end{align}
and the degrees of freedom are the polynomial moments in every element
against the polynomials of degree $k-1$.
The Stokes velocity field is approximated in the finite-dimensional
subspace of $\big[\HONEzr(\Omega)\big]^2$ given by
\begin{align}
  \Vvhk:=\Big\{\vvh\in\big[\HONEzr(\Omega)\big]^2\,:\,\restrict{\vvh}{\P}\in\Vvhk(\P)\quad\forall\P\in\Th\Big\}.
  \label{eq:VEM:global:space}
\end{align}
This functional space is defined by ``gluing together'' in a
conforming way the local virtual element spaces $\Vvhk(\P)$, defined
on the mesh elements $\P\in\Th$.
In particular, we denote the elemental space of the first formulation
by $\VvhkFO(\P)$ (\emph{formulation $\FO$}) and that of the second
formulation by $\VvhkFT(\P)$ (\emph{formulation $\FT$}), and we will
use the generic symbols $\Vvhk(\P)$ (local space) and $\Vvhk$ (global
space) when we discuss properties that hold regardless of the specific
space definition.
For both formulation, we also consider the modified definition of the
elemental spaces according to the so called \emph{enhancement
strategy}~\cite{Ahmad-Alsaedi-Brezzi-Marini-Russo:2013}.
This strategy allows us to compute the $\LTWO$-orthogonal projection
onto the local subspace of vector polynomials of degree $k$, i.e., the
subspace $\big[\PS{k}(\P)\big]^2\subset\Vvhk(\P)$.
This orthogonal projection is required in the formulation of the
right-hand side of Eq.~\eqref{eq:stokes:vem:A}.

\medskip
In the rest of this section, we first review the general construction
of the virtual element approximation.
Then, for each formulation
\begin{description}
\item[-] $(i)$ we explicitly define the local virtual element space
  and its degrees of freedom and discuss their unisolvence;
\item[-] $(ii)$ we prove that the following polynomial projections of
  $\nabla\vvh$, $\DIV\vvh$ and $\vvh$ are computable for every virtual
  element vector-valued field $\vvh$ using only the degrees of freedom
  of $\vvh$:
  $\PizP{k-1}\nabla\vvh\in\big[\PS{k-1}(\P)\big]^{2\times2}$;
  $\PizP{k-1}\DIV\vvh\in\PS{k-1}(\P)$;
  $\PinP{k}\vvh\in\big[\PS{k}(\P)\big]^2$;
  $\PizP{\kb}\vvh\in\big[\PS{\kb}(\P)\big]^2$ where $\kb=max(0,k-2)$
  for the regular space definition or $\kb=\ks$ for the enhanced space
  definition;
  (we recall that the formal definitions of these operators are given
  in~\eqref{eq:proj:H1:P:def}-\eqref{eq:tensor:proj:L2:P:def}).
\end{description}

\PGRAPH{Construction of the virtual element bilinear form $\ash$}
Using these projection operators, we define the virtual element
bilinear form $\ash(\cdot,\cdot)$ as the sum of local bilinear forms
$\asPh(\cdot,\cdot):\Vvhk(\P)\times\Vvhk(\P)\to\REAL$ as follows:
\begin{align}
  &\ash(\vvh,\wvh) = \sum_{\P\in\Th}\asPh(\vvh,\wvh)
  \label{eq:ash:def}
  \intertext{where~}
  &\asPh(\vvh,\wvh)
  = \int_{\P}\PizP{k-1}\nabla\vvh:\PizP{k-1}\nabla\wvh\dV
  + \SPh\big( (1-\Pi^{\P}_{k})\vvh, (1-\Pi^{\P}_{k})\wvh \big).
  \label{eq:asPh:def}
\end{align}
Here, $\SPh(\cdot,\cdot):\Vvhk(\P)\times\Vvhk(\P)\to\REAL$ is the
local bilinear form providing the stabilization term, and
$\Pi^{\P}_{k}$ denote either the $\LTWO$-orthogonal projection
$\PizP{k}$ (when computable) or the elliptic projection $\PinP{k}$.
The term $\SPh(\cdot,\cdot)$ can be any symmetric, positive definite
bilinear form for which there exist two real, positive constant
$\sigma_*$ and $\sigma^*$ independent of $\hh$ (and $\P$) such that
\begin{align*}
  \sigma_*\asP(\vvh,\vvh)\leq\SPh(\vvh,\vvh)\leq\sigma^*\asP(\vvh,\vvh)
  \qquad
  \forall\vvh\in\Vvhk(\P)\cap\textrm{ker}(\Pi^{\P}_{k}),
\end{align*}
where $\asP(\cdot,\cdot)$ is defined in~\eqref{eq:asP:def}.
Several possible stabilizations have been proposed over the last few
years and are available from the technical literature,
cf.~\cite{Mascotto:2018}.
The local bilinear form $\asPh(\cdot,\cdot)$ has two fundamental
properties that are used in the analysis:
\begin{itemize}
\item \textbf{Polynomial consistency}: for every vector field
  $\vvh\in\Vvhk(\P)$ and vector polynomial field
  $\qvh\in\big[\PS{k}(\P)\big]^2$ it holds:
  \begin{align}
    \asPh(\vvh,\qvh) = \asP(\vvh,\qvh);
    \label{eq:consistency}
  \end{align}

  \medskip
\item \textbf{Stability}: there exist two real, positive constants
  $\alpha_*$ and $\alpha^*$ independent of $\hh$ such that
  \begin{align}
    \alpha_*\asP(\vvh,\vvh)\leq\asPh(\vvh,\vvh)\leq\alpha^*\asP(\vvh,\vvh)
    \qquad\forall\vvh\in\Vvhk(\P).
    \label{eq:ash:stability}
  \end{align}
  Both constants $\alpha_*$ and $\alpha^*$ may depend on the
  polynomial degree $k$ and the mesh regularity constant $\rho$.
\end{itemize}
By adding all the elemental contributions, we find that
$\ash(\cdot,\cdot)$ is a coercive bilinear form on $\Vvhk\times\Vvhk$:
\begin{align}
  \ash(\vvh,\vvh) \geq \alpha_*\snorm{\vvh}{1,\Omega}^2.
  \label{eq:ash:coercivity}
\end{align}
A second straighforward consequence of~\eqref{eq:ash:stability} and
the symmetry of $\asPh(\cdot,\cdot)$ is that this bilinear form is an
inner product on $\Vvhk(\P)\setminus\REAL$.
Using the Cauchy-Schwarz inequality, it holds that:
\begin{align}
  \asPh(\vvh,\wvh)
  \leq \big[\asPh(\vvh,\vvh)\big]^{\frac{1}{2}}\,\big[\asPh(\wvh,\wvh)\big]^{\frac{1}{2}}
  \leq \alpha^*\,\big[\asP(\vvh,\vvh)\big]^{\frac{1}{2}}\,\big[\asP(\wvh,\wvh)\big]^{\frac{1}{2}}
  =    \alpha^*\,\snorm{\vvh}{1,\P}\,\snorm{\wvh}{1,\P},
  \label{eq:ash:local:continuity}
\end{align}
which implies that the local bilinear form $\asPh(\cdot,\cdot)$ is continuous on
$\Vvhk(\P)\times\Vvhk(\P)$.
The global continuity of $\ash(\cdot,\cdot)$ follows on summing all
the local terms and using again the Cauchy-Schwarz inequality:
\begin{align}
  \ash(\vvh,\wvh)
  &=   \sum_{\P\in\Th}\asPh(\vvh,\wvh)
  \leq \alpha^*\,\sum_{\P\in\Th}\,\snorm{\vvh}{1,\P}\,\snorm{\wvh}{1,\P}
  \leq \alpha^*\,\Bigg(\sum_{\P\in\Th}\,\snorm{\vvh}{1,\P}^2\Bigg)^{\frac12}\,\Bigg(\sum_{\P\in\Th}\,\snorm{\wvh}{1,\P}^2\Bigg)^{\frac12}
  \nonumber\\[0.5em]
  &=    \alpha^*\,\snorm{\vvh}{1,\Omega}\,\snorm{\wvh}{1,\Omega}.
  \label{eq:ash:global:continuity}
\end{align}

\PGRAPH{Construction of the virtual element bilinear forms $\bsh$}
Similarly, we define the virtual element bilinear form
$\bsh(\cdot,\cdot)$ as the sum of local bilinear forms
$\bsPh(\cdot,\cdot):\Vvhk(\P)\times\PS{k-1}(\P)\to\REAL$ as follows:
\begin{align}
  \bsh(\vvh,\qsh) = \sum_{\P\in\Th}\bsPh(\vvh,\qsh)
  \qquad\textrm{where}\qquad
  \bsPh(\vvh,\qsh)
  = \int_{\P}\qsh\PizP{k-1}\DIV\vvh\dV.
  \label{eq:bsh:def}
\end{align}
From the definition of the orthogonal projection operator
$\PizP{k-1}$, it immediately follows that
\begin{align}
  \bsPh(\vvh,\qsh)=\bsP(\vvh,\qsh)
  \qquad\forall\vvh\in\Vvhk(\P),\,\qsh\in\PS{k-1}(\P).
  \label{eq:bsPh=bsP}
\end{align}
If we add this relation over all the mesh elements, we find that
\begin{align}
  \bsh(\vvh,\qsh) = \bs(\vvh,\qsh)
  \qquad\forall\vvh\in\Vvhk,\,\qsh\in\PS{k-1}(\Th),
  \label{eq:bsh=bs}
\end{align}
which will be used in the analysis of the next section.

\medskip
\begin{remark}
  Since $\PizP{k-1}(\DIV\uvh)$ for all elements $\P$ is a polynomial
  of degree $k-1$, equation~\eqref{eq:stokes:vem:B} is equivalent to
  require that $\PizP{k-1}(\DIV\uvh)=0$ in $\P$.
  This condition is the discrete analog in $\PS{k-1}(\P)$ of the
  incompressibility condition $\DIV\uv=0$.
\end{remark}

\PGRAPH{Construction of the virtual element right-hand side}
In every polygonal element $\P$, we approximate the right-hand side
vector $\fv$ with its polynomial projection $\fvh:=\PizP{\kb}\fv$
onto the local polynomial space $\PS{\kb}(\P)$.
We consider two possible choices of $\kb$ given the integer $k\geq1$:
\begin{itemize}
\item[$\bullet$] $\kb=\max(k-2,0)$: this is the setting proposed in the original
  paper~\cite{BeiraodaVeiga-Brezzi-Cangiani-Manzini-Marini-Russo:2013};
\item[$\bullet$] $\kb=k$: this is the setting proposed
  in Ref.~\cite{Ahmad-Alsaedi-Brezzi-Marini-Russo:2013}, which requires the
  enhanced definition of the virtual element space.
  We discuss the enhanced definition of the virtual element space of
  both formulations in the next sections.
\end{itemize}

\medskip
Finally, the right hand-side of equation~\eqref{eq:stokes:vem:A} is
given by
\begin{align}
  \bil{\fvh}{\vvh}
  = \sum_{\P\in\Th} \int_{\P}\PizP{\kb}\fvh\cdot\vvh\dV
  = \sum_{\P\in\Th} \int_{\P}\fvh\cdot\PizP{\kb}\vvh\dV,
  \label{eq:fvh:def}
\end{align}
where the second equality follows on applying the definition of the
orthogonal projector $\PizP{\kb}$.

We recall the following results pertaining these two possible
approximations of the right-hand side, which follows on noting that
$\big(1-\PizP{\kb}\big)$ is orthogonal to $\PizP{0}$ in the
$\LTWO$-inner product.
Assuming $\fv\in\big[\HS{s}(\Omega)\big]^{\dims}$ with
$1\leq\ss\leq\kb$, we find that
\begin{align}
  \ABS{\bil{\fvh}{\vvh}-\scal{\fv}{\vvh}}
  &=   \ABS{ \sum_{\P\in\Th}\int_{\P}\big(\PizP{\kb}\fv-\fv\big)\vvh\dV }
  \leq \sum_{\P\in\Th}\ABS{ \int_{\P}\big(\PizP{\kb}\fv-\fv\big)\big(\vvh-\PizP{0}\vvh\big)\dV }
  \nonumber\\[0.5em]
  &\leq \sum_{\P\in\Th} \norm{\PizP{\kb}\fv-\fv}{0,\P}\,\norm{\vvh-\PizP{0}\vvh}{0,\P}
  \leq \Cs\hh^{s+1}\norm{\fv}{s,\P}\,\snorm{\vvh}{1,\P}.
  \label{eq:fv:bound:0}
\end{align}
For $\kb=0$ and assuming $\fv\in\big[\LTWO(\Omega)\big]^{\dims}$, we
find that
\begin{align}
  \ABS{\bil{\fvh}{\vvh}-\scal{\fv}{\vvh}}
  &=   \ABS{ \sum_{\P\in\Th}\int_{\P}\big(\PizP{0}\fv-\fv\big)\vvh\dV }
  \leq \sum_{\P\in\Th}\ABS{ \int_{\P}\big(\PizP{0}\fv-\fv\big)\big(\vvh-\PizP{0}\vvh\big)\dV }
  \nonumber\\[0.5em]
  &\leq \sum_{\P\in\Th} \norm{\PizP{0}\fv-\fv}{0,\P}\,\norm{\vvh-\PizP{0}\vvh}{0,\P}
  \leq \Cs\hh\norm{\fv}{0,\P}\,\snorm{\vvh}{1,\P}.
  \label{eq:fv:bound:1}
\end{align}

\subsection{Formulation $\FO$}
\label{subsec:first:formulation}

We set the virtual element space for the velocity vector-valued fields
of the first formulation as
\begin{align*}
  \Vvhkp(\P)=\Big[\Vshkp(\P)\Big]^2,
\end{align*}
where the corresponding scalar virtual element space is given by
\begin{align}
  \Vshkp(\P):=\Big\{\vsh\in\HONE(\P)\,:\,
  \restrict{\vsh}{\partial\P}\in\CS{0}(\partial\P),\,
  \restrict{\vsh}{\E}\in\PS{k+1}(\E)\,\forall\E\in\partial\P,\,
  \Delta\vsh\in\PS{k-2}(\P)
  \Big\}.
  \label{eq:FO:regular-space:def}
\end{align}
With a small abuse of notation, we denote the enhanced version of the
local space with the same symbol $\Vshkp(\P)$, and we consider the
following definition:
\begin{align}
  \Vshkp(\P):=\Big\{\vsh\in\HONE(\P)\,:\,
  &\restrict{\vsh}{\partial\P}\in\CS{0}(\partial\P),\,
  \restrict{\vsh}{\E}\in\PS{k+1}(\E)\,\forall\E\in\partial\P,\,
  \Delta\vsh\in\PS{k}(\P),\nonumber\\[0.25em]
  &\int_{\P}\big(\vsh-\PinP{k}\vsh\big)\qsh\dV=0\quad\forall\qsh\in\PS{k}(\P)\backslash{\PS{k-2}(\P)}
  \Big\},
  \label{eq:FO:enhanced-space:def}
\end{align}
where $\PS{k}(\P)\backslash{\PS{k-2}(\P)}$ is the space of polynomials
of degree exactly equal to $k$ or $k-1$.
This definition uses the elliptic projection operator $\PinP{k}$,
which is computable from the degrees of freedom defined below,
cf. Lemma~\ref{lemma:3}.

\medskip
\begin{remark}
  The virtual element space~\eqref{eq:FO:regular-space:def} and its
  modified version~\eqref{eq:FO:enhanced-space:def} differ from the
  spaces respectively defined in
  References~\cite{BeiraodaVeiga-Brezzi-Cangiani-Manzini-Marini-Russo:2013}
  and~\cite{Ahmad-Alsaedi-Brezzi-Marini-Russo:2013} because all the
  edge traces of a virtual element function are polynomials of degree
  $k+1$ instead of $k$.
  This definition is a special case of the generalized local virtual
  element space that is considered
  in~\cite[Section~3]{BeiraodaVeiga-Vacca:2020-arXiv} for the
  discretization of the Poisson equation.
  In fact, the local scalar space~\eqref{eq:FO:regular-space:def} can
  be obtained by setting $k_{\partial}=k+1$
  in~\cite[Eq.~(7)]{BeiraodaVeiga-Vacca:2020-arXiv} (with the same
  meaning for the parameter $k$).
\end{remark}

\medskip
\begin{remark}
  Assuming that the trace on the edges of the elemental boundary is a
  polynomial of degree $k+1$ instead of $k$ does not change the
  convergence rate of the method and implies that an additional degree
  of freedom is needed for each velocity components on every edge,
  thus increasing the complexity and the computational costs.
  However, it makes the proof of the inf-sup condition almost
  straightforward, which is crucial to prove the well-posedness and
  convergence of the method.
  So, this formulation allows us to build a stable numerical
  approximation to the Stokes problem that holds on any kind of
  polygonal meshes, including triangular and square meshes, for all
  orders of accuracy $k\geq1$.
\end{remark}


\begin{figure}[!t]
  \centering
  \begin{tabular}{ccc}
    \includegraphics[width=0.28\textwidth]{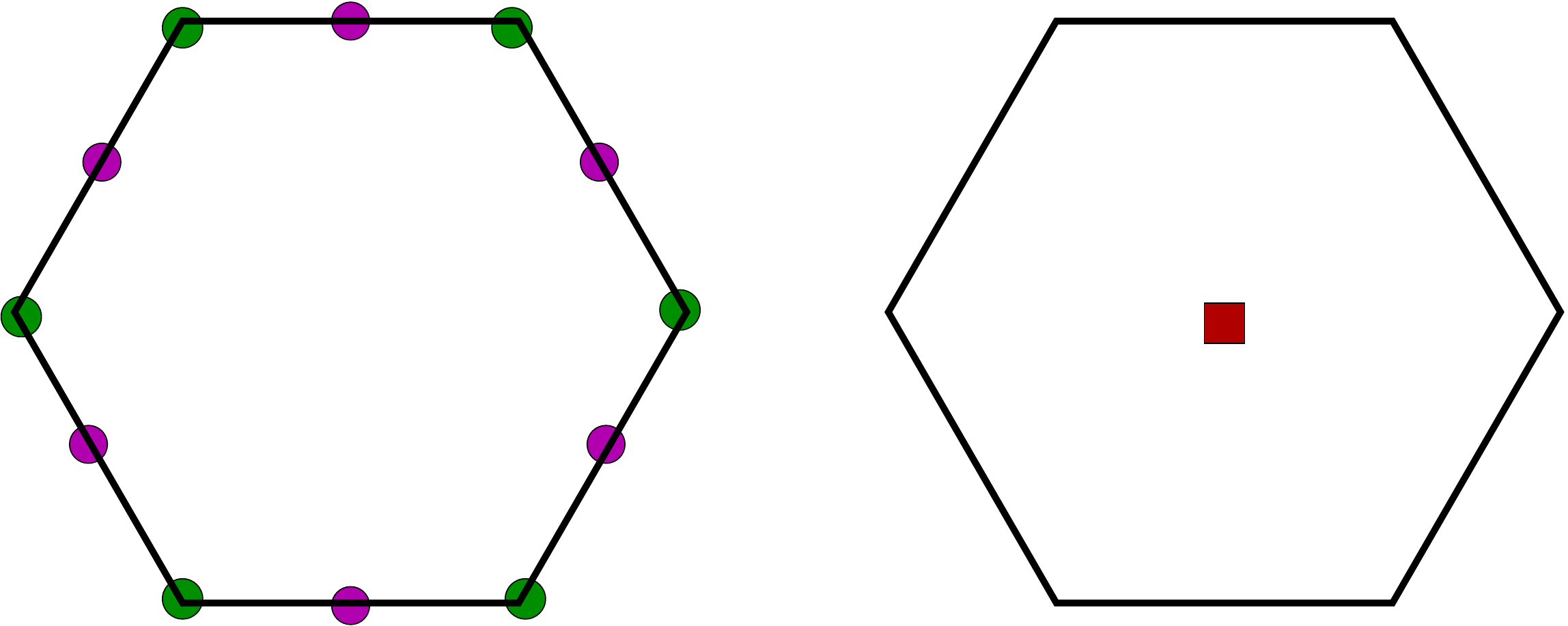} &\qquad   
    \includegraphics[width=0.28\textwidth]{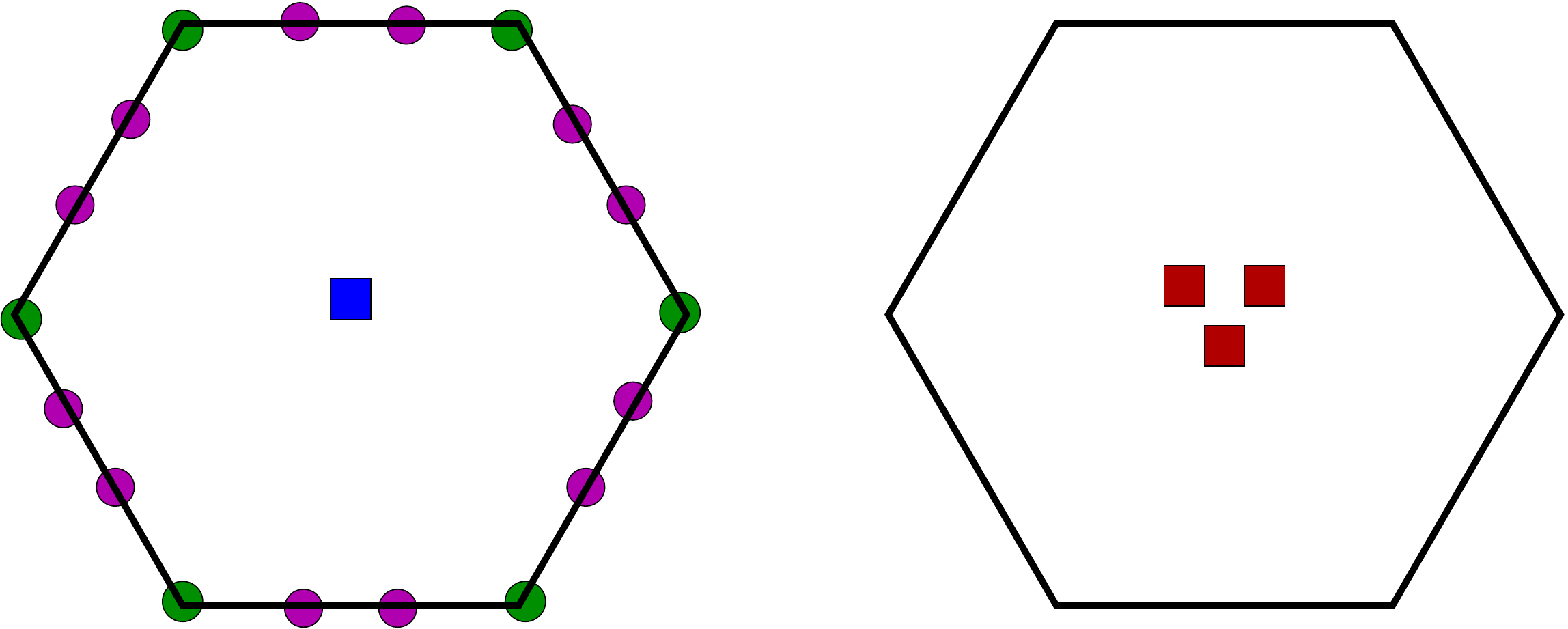} &\qquad   
    \includegraphics[width=0.28\textwidth]{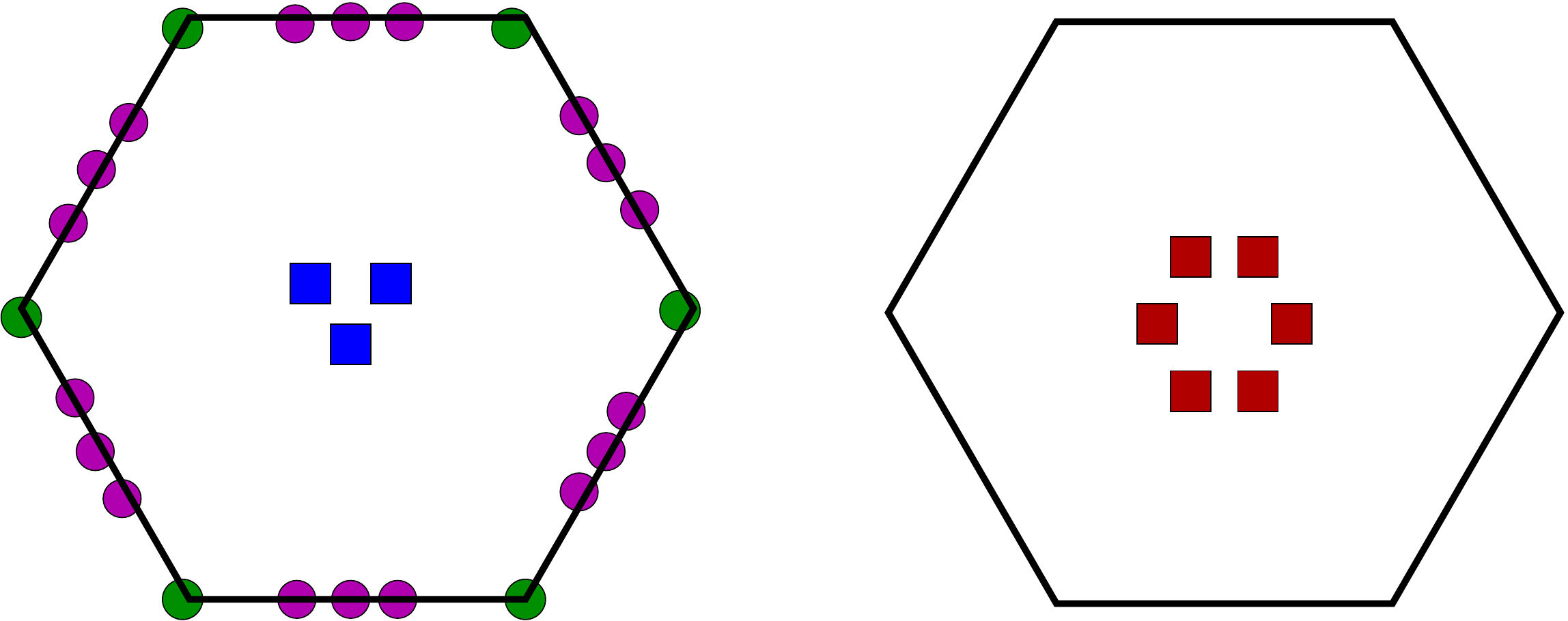} \\[0.5em] 
    \hspace{2mm}$\mathbf{k=1}$ & \hspace{9mm}$\mathbf{k=2}$ & \hspace{9mm}$\mathbf{k=3}$ 
  \end{tabular}
  \caption{First virtual element formulation: degrees of freedom of
    each component of the virtual element vector-valued fields (left)
    and the scalar polynomial fields (right) of an hexagonal element
    for the accuracy degrees $k=1,2,3$.
    Nodal values at the polygonal vertices and edge polynomial moments
    are marked by a circular bullet; cell polynomial moments are
    marked by a square bullet.}
  \medskip
  \label{fig:dofs:FO}
\end{figure}

\medskip
\noindent
The degrees of freedom of this formulation for the spaces defined
in~\eqref{eq:FO:regular-space:def}
and~\eqref{eq:FO:enhanced-space:def} are given by:

\medskip
\begin{description}
\item[-]\DOFS{F1}{a} for $k\geq1$, the vertex values $\vsh(\xV)$, $\V\in\partial\P$;
\item[-]\DOFS{F1}{b} for $k\geq1$, the polynomial edge moments of
  $\vsh$
  \begin{align}
    \frac{1}{\mE}\int_{\E}\vsh(\ss)\qsh(\ss)\ds\quad\forall\qsh\in\PS{k-1}(\E)
  \end{align}
  for every edge $\E\in\partial\P$;
\item[-]\DOFS{F1}{c} for $k\geq2$, the polynomial cell moments of
  $\vsh$
  \begin{align}
    \frac{1}{\mP}\int_{\P}\vsh(\xv)\qsh(\xv)\dV\quad\forall\qsh\in\PS{k-2}(\P).
  \end{align}
\end{description}
Figure~\ref{fig:dofs:FO} shows the degrees of freedom for each
component of the velocity vector and the pressure for $k=1,2,3$ on an
hexagonal element.

\medskip
\noindent
\begin{lemma}[Unisolvence of the degrees of freedom]
  \label{lemma:F1:unisolvence}
  The degrees of freedom \DOFS{F1}{a}, \DOFS{F1}{b}, and \DOFS{F1}{c}
  are unisolvent in the space $\Vshkp(\P)$ for both the definitions
  given in~\eqref{eq:FO:regular-space:def}
  and~\eqref{eq:FO:enhanced-space:def}.
\end{lemma}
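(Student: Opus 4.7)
The plan is to verify unisolvence by the standard dimension--plus--injectivity argument: first confirm that the cardinality of the degree set \DOFS{F1}{a}--\DOFS{F1}{c} equals $\dim\Vshkp(\P)$, then show that any $\vsh\in\Vshkp(\P)$ with all dofs equal to zero must vanish identically. For the dimension count, I will observe that prescribing a function in the space amounts to prescribing its trace on $\partial\P$ (a continuous, piecewise polynomial of degree $k+1$, contributing $\NPV+\NPE k$ parameters) together with $\Delta\vsh\in\PS{k-2}(\P)$ of dimension $k(k-1)/2$ in the regular case~\eqref{eq:FO:regular-space:def}, and $\Delta\vsh\in\PS{k}(\P)$ minus the $2k+1$ enhancement constraints of~\eqref{eq:FO:enhanced-space:def} (which again gives $k(k-1)/2$ interior parameters). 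In both cases the total agrees with the cardinality of \DOFS{F1}{a}--\DOFS{F1}{c}, so it suffices to prove injectivity.

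For the boundary step I would argue edge-by-edge: the trace $\vsh|_{\E}\in\PS{k+1}(\E)$ vanishes at both endpoints of $\E$ by \DOFS{F1}{a}, so it factors as $s(s-\hE)\,r(s)$ with $r\in\PS{k-1}(\E)$; the vanishing of the moments against $\PS{k-1}(\E)$ in \DOFS{F1}{b}, tested with $r$, yields $\int_{\E}s(\hE-s)\,r^2\ds=0$, hence $r\equiv 0$ and $\vsh|_{\partial\P}=0$. Next, integrating by parts one writes
\begin{align*}
\int_{\P}\abs{\nabla\vsh}^{2}\dV = -\int_{\P}\vsh\,\Delta\vsh\dV,
\end{align*}
since $\vsh$ vanishes on $\partial\P$. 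In the regular case $\Delta\vsh\in\PS{k-2}(\P)$, so the right-hand side is zero directly from \DOFS{F1}{c}; this forces $\nabla\vsh=0$ and, combined with the boundary condition, $\vsh=0$.

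The enhanced case is the only subtle step. I would split $\Delta\vsh = q_1 + q_2$ with $q_1\in\PS{k-2}(\P)$ and $q_2\in\PS{k}(\P)\setminus\PS{k-2}(\P)$. The contribution of $q_1$ vanishes by \DOFS{F1}{c}. For $q_2$, I would invoke the enhancement orthogonality to rewrite
\begin{align*}
\int_{\P}\vsh\,q_2\dV = \int_{\P}\PinP{k}\vsh\cdot q_2\dV,
\end{align*}
and then show that $\PinP{k}\vsh=0$ under the vanishing-dof hypothesis. This follows because, testing the defining identity of $\PinP{k}\vsh$ with any $q\in\PS{k}(\P)$ and integrating by parts, the boundary term disappears (since $\vsh|_{\partial\P}=0$) and the remaining volume term $-\int_{\P}\vsh\,\Delta q\dV$ vanishes by \DOFS{F1}{c} because $\Delta q\in\PS{k-2}(\P)$; hence $\nabla\PinP{k}\vsh\equiv 0$, and the boundary-average normalization in~\eqref{eq:proj:H1:P:def} pins the remaining constant to zero. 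Combining everything gives $\int_{\P}\abs{\nabla\vsh}^{2}\dV=0$ and therefore $\vsh=0$.

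The main obstacle is precisely this last maneuver in the enhanced case: one has to be careful that the enhancement condition couples the dofs with $\PinP{k}\vsh$ in a nontrivial way, so the argument only closes after verifying that $\PinP{k}\vsh$ is itself forced to zero by the vanishing of \DOFS{F1}{a}--\DOFS{F1}{c}. Everything else reduces to a clean boundary polynomial argument plus Green's identity.
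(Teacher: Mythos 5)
Your proof is correct and follows essentially the same route as the paper: the energy identity $\int_{\P}\abs{\nabla\vsh}^2\dV=-\int_{\P}\vsh\,\Delta\vsh\dV+\int_{\partial\P}\vsh\,\partial_n\vsh\dS$ combined with a dimension count. You actually supply more detail than the paper, which only sketches the regular case and defers the enhanced case to \cite{Ahmad-Alsaedi-Brezzi-Marini-Russo:2013}; your explicit verification that $\PinP{k}\vsh=0$ under vanishing degrees of freedom, which closes the enhanced case, is the right (and standard) way to fill that gap.
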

\begin{proof}
  The proof of the unisolvence of the degrees of freedom
  \DOFS{F1}{a}-\DOFS{F1}{c} for $\Vshkp(\P)$ follows by adapting the
  arguments used in
  \cite[Proposition~1]{BeiraodaVeiga-Brezzi-Cangiani-Manzini-Marini-Russo:2013}
  for the space defined in~\eqref{eq:FO:regular-space:def}
  and~\cite[Proposition~2]{Ahmad-Alsaedi-Brezzi-Marini-Russo:2013} for
  the space defined in~\eqref{eq:FO:enhanced-space:def}.
  We briefly sketch the proof of the unisolvence for the space defined
  in~\eqref{eq:FO:regular-space:def}.
  For every virtual element function in $\Vshkp(\P)$, we consider the
  integration by parts:
  \begin{align}
    \int_{\P}\ABS{\nabla\vsh}^2\dV
    = -\int_{\P}\vsh\cdot\Delta\vsh + \sum_{\E\in\partial\P}\int_{\E}\vsh\,\norE\cdot\nabla\vsh\dS
    = \TERM{I}{} + \TERM{II}{}.
  \end{align}
  Now, assume that the degrees of freedom \DOFS{F1}{a}, \DOFS{F1}{b},
  and \DOFS{F1}{c} are all zero.
  Then,
  \begin{description}
  \item[-] for $k=1$, it holds that $\Delta\vsh=0$; for $k\geq2$, it
    holds that $\Delta\vsh$ is a polynomial of degree $k-2$ and
    $\TERM{I}{}$ is a degree of freedom, hence it is zero by hypothesis;
  \item[-] the trace of $\vsh$ along each edge $\E\in\partial\P$ is a
    polynomial of degree $k+1$ that can be recovered by the
    interpolation of the degrees of freedom \DOFS{F1}{a} and
    \DOFS{F1}{b}.
    Since these degrees of freedom are zero by hypothesis, their trace
    interpolation is zero.
  \end{description}
  Consequently, $\nabla\vsh=0$, which implies that $\vsh$ is constant
  on $\P$, and this constant is zero since it coincides with the value
  of all its degrees of freedom, which we assume to be zero.
  The proof of the unisolvence for the space defined
  in~\eqref{eq:FO:regular-space:def} is completed by noting that the
  number of the degrees of freedom equals the dimension of space
  $\Vshkp(\P)$.
  Similar modifications to the argument
  of~\cite[Proposition~2]{Ahmad-Alsaedi-Brezzi-Marini-Russo:2013} make
  it possible to prove the unisolvence for the enhanced virtual
  element space defined in~\eqref{eq:FO:enhanced-space:def}.
\end{proof}

\medskip
\noindent
\begin{lemma}
  \label{lemma:2}
  Let $\P$ be an element of mesh $\Th$.
  For every virtual element function $\vsh\in\Vshkp(\P)$, the
  polynomial projection $\PizP{k-1}\nabla\vsh$ is computable using the
  degrees of freedom \DOFS{F1}{a}, \DOFS{F1}{b}, and \DOFS{F1}{c} of
  $\vsh$.
\end{lemma}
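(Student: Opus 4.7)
The plan is to show that for every test polynomial $\qv\in\big[\PS{k-1}(\P)\big]^2$ the integral $\int_{\P}\nabla\vsh\cdot\qv\dV$ is computable from the degrees of freedom \DOFS{F1}{a}-\DOFS{F1}{c}, since by the variational definition \eqref{eq:vector:proj:L2:P:def} computing $\PizP{k-1}\nabla\vsh$ amounts to solving a linear system whose right-hand side entries are exactly such integrals and whose matrix (a polynomial mass matrix on $\big[\PS{k-1}(\P)\big]^2$) is explicitly invertible.

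The key step is the integration by parts
\begin{align*}
\int_{\P}\nabla\vsh\cdot\qv\dV
= -\int_{\P}\vsh\,\DIV\qv\dV + \sum_{\E\in\partial\P}\int_{\E}\vsh\,(\qv\cdot\norPE)\dS,
\end{align*}
which splits the unknown quantity into a boundary contribution and a volume contribution, each of which I will check is computable.

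For the boundary term, I would observe that on each edge $\E$, the trace $\restrict{\vsh}{\E}$ is by definition a polynomial of degree $k+1$ and is therefore completely determined by the $k+2$ boundary data on $\E$, namely the two vertex values supplied by \DOFS{F1}{a} and the $k$ edge moments against $\PS{k-1}(\E)$ supplied by \DOFS{F1}{b}; since $\qv\cdot\norPE$ is a univariate polynomial of degree $k-1$ on $\E$, the edge integral is evaluated exactly once the trace is reconstructed. For the volume term, I would note that $\DIV\qv\in\PS{k-2}(\P)$, so for $k\geq2$ this integral is a linear combination of the cell moments \DOFS{F1}{c}, while for $k=1$ it vanishes identically and nothing is needed.

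The only subtle point, which is really the whole reason for the enlarged edge trace space $\PS{k+1}(\E)$ in \eqref{eq:FO:regular-space:def}, is that the boundary integrand $\vsh(\qv\cdot\norPE)$ has degree $2k$ on each edge and so requires the full polynomial trace; the boundary DOFs were designed precisely to reconstruct that trace. No properties specific to the non-enhanced space~\eqref{eq:FO:regular-space:def} are used, so the same argument yields the result for the enhanced space~\eqref{eq:FO:enhanced-space:def}, whose boundary DOFs and moments up to degree $k-2$ coincide with those above. I do not expect any genuine obstacle: the argument is a direct computation and the one thing to verify carefully is simply that the edge-trace reconstruction from \DOFS{F1}{a}-\DOFS{F1}{b} is well posed, which follows because on each edge the $k+2$ linear functionals consisting of the two endpoint evaluations and the $k$ moments against $\PS{k-1}(\E)$ are unisolvent on $\PS{k+1}(\E)$.
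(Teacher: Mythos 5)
Your proof is correct and follows essentially the same route as the paper's: integrate by parts, reconstruct the degree-$(k+1)$ edge trace of $\vsh$ from \DOFS{F1}{a}--\DOFS{F1}{b} to evaluate the boundary term, and use the cell moments \DOFS{F1}{c} for the volume term (the paper merely argues component by component, testing $\partial\vsh\slash\partial\xs$ and $\partial\vsh\slash\partial\ys$ against scalar polynomials in $\PS{k-1}(\P)$, which is equivalent to your vector-valued test space). One peripheral remark is slightly off: the enlarged trace space $\PS{k+1}(\E)$ is not what makes this projection computable (the identical argument works with degree-$k$ traces, as in the standard conforming VEM); the paper motivates the enlargement by the inf-sup condition, not by this lemma.
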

\begin{proof}
  To prove that $\PizP{k-1}\big(\nabla\vsh\big)$ is computable, we
  explicitly prove that
  $\PizP{k-1}\big(\partial\vsh\slash{\partial\xs}\big)$ is computable.
  Then, the same argument can be applied to prove that
  $\PizP{k-1}(\partial\vsh\slash{\partial\ys})$ is also computable.
  To this end, we start from the definition of the orthogonal
  projection and integrate by parts:
  \begin{align}
    \int_{\P}\qsh\PizP{k-1}\frac{\partial\vsh}{\partial\xs}\dV
    = \int_{\P}\qsh\frac{\partial\vsh}{\partial\xs}\dV
    = -\int_{\P}\vsh\frac{\partial\qsh}{\partial\xs}\dV
    + \sum_{\E\in\partial\P}\ns_x\int_{\E}\vsh\qsh\dS
    = \TERM{I}{} + \TERM{II}{},
  \end{align}
  which holds for every $\qsh\in\PS{k-1}(\P)$.
  Term $\TERM{I}{}$ is computable since
  $\partial\qsh\slash{\partial\xs}\in\PS{k-2}(\P)$ and this integral is
  determined by the degrees of freedom of $\vsh$ in \DOFS{F1}{c}.
  Term $\TERM{II}{}$ is computable since the polynomial $\qsh$ is known
  and $\restrict{\vsh}{\E}\in\PS{k+1}(\E)$ can be interpolated from
  the degrees of freedom of $\vsh$ given by \DOFS{F1}{a} and
  \DOFS{F1}{b} on every edge $\E\in\partial\P$.
\end{proof}

\medskip
\noindent
\begin{remark}
  \label{remark:2}
  For all scalar virtual element functions $\vsh\in\Vshkp(\P)$, the
  polynomial projections
  $\PizP{k-1}\big(\partial\vsh\slash{\partial\xs}\big)$ and
  $\PizP{k-1}\big(\partial\vsh\slash{\partial\ys}\big)$ forming
  $\PizP{k-1}\nabla\vsh$ are computable by using the degrees of
  freedom of $\vsh$.
  Consequently, the polynomial projections
  $\PizP{k-1}\nabla\vvh\in\big[\PS{k-1}(\P)\big]^{2\times2}$ and
  $\PizP{k-1}\DIV\vvh\in\PS{k-1}(\P)$ are computable for all virtual
  vector-valued fields $\vvh\in\big[\Vshkp(\P)\big]^2$.
\end{remark}

\medskip
\noindent
\begin{lemma}
  \label{lemma:3}
  Let $\P$ be an element of mesh $\Th$.
  For all virtual element functions $\vsh\in\Vshkp(\P)$, the
  polynomial projection $\PinP{k}\vsh\in\PS{k}(\P)$ is computable
  from the degrees of freedom of $\vsh$.
\end{lemma}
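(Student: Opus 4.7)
The plan is to characterize $\PinP{k}\vsh$ through the linear system obtained by expanding $\PinP{k}\vsh\in\PS{k}(\P)$ in a basis of $\PS{k}(\P)$, for instance the scaled monomials $\calM_{k}(\P)$, and then imposing the defining conditions~\eqref{eq:proj:H1:P:def}. The matrix of this system contains only polynomial inner products $\int_{\P}\nabla\qsh_i\cdot\nabla\qsh_j\dV$ and boundary integrals $\int_{\partial\P}\qsh_i\dS$ of known polynomials, hence is explicitly computable. The work is therefore entirely concentrated in showing that the two data pieces on the right-hand side, namely $\int_{\P}\nabla\vsh\cdot\nabla\qsh\dV$ for every $\qsh\in\PS{k}(\P)$ and $\int_{\partial\P}\vsh\dS$, can be evaluated from the degrees of freedom \DOFS{F1}{a}--\DOFS{F1}{c} of $\vsh$.

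For the gradient term, I would integrate by parts, writing
\begin{equation*}
\int_{\P}\nabla\vsh\cdot\nabla\qsh\dV
= -\int_{\P}\vsh\,\Delta\qsh\dV
+ \sum_{\E\in\partial\P}\int_{\E}\vsh\,(\nabla\qsh\cdot\norPE)\dS.
\end{equation*}
Since $\qsh\in\PS{k}(\P)$, one has $\Delta\qsh\in\PS{k-2}(\P)$, so for $k\geq 2$ the cell integral is a linear combination of the scaled cell moments \DOFS{F1}{c}, while for $k=1$ this term vanishes identically. On each edge $\E$ the polynomial $\nabla\qsh\cdot\norPE$ is known explicitly, and $\restrict{\vsh}{\E}\in\PS{k+1}(\E)$ can be reconstructed uniquely from the two vertex values in \DOFS{F1}{a} together with the $k$ edge moments in \DOFS{F1}{b} (these $k+2$ linear functionals are unisolvent on $\PS{k+1}(\E)$), so the boundary integrals are computable as well. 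This exact match between the polynomial degree $k-2$ of $\Delta\qsh$ and the cell moments available in \DOFS{F1}{c} is the main structural feature that makes the whole argument go through.

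For the second defining equation, $\int_{\partial\P}\vsh\dS$ splits as $\sum_{\E\in\partial\P}\int_{\E}\vsh\dS$, and each edge integral is computable for the same reason as above, namely that $\restrict{\vsh}{\E}$ is a polynomial of degree $k+1$ fully determined by \DOFS{F1}{a} and \DOFS{F1}{b}. Combining the two parts yields a linear system with known matrix and known right-hand side, whose unique solution delivers the coefficients of $\PinP{k}\vsh$ in the chosen basis. The argument applies verbatim to both the standard space~\eqref{eq:FO:regular-space:def} and the enhanced space~\eqref{eq:FO:enhanced-space:def}, since the computation of $\PinP{k}\vsh$ uses only the edge traces and the cell moments against $\PS{k-2}(\P)$, which are degrees of freedom in both definitions. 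The only conceptual obstacle — making sure that no term slips out of reach of the available DOFs — is resolved by the Laplacian drop from $\PS{k}$ to $\PS{k-2}$ under integration by parts, and is essentially bookkeeping once this observation is in place.
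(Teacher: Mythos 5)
Your proposal is correct and follows essentially the same route as the paper's proof: integrate by parts, use that $\Delta\qsh\in\PS{k-2}(\P)$ so the cell term is recovered from the moments \DOFS{F1}{c}, and reconstruct the edge trace $\restrict{\vsh}{\E}\in\PS{k+1}(\E)$ from \DOFS{F1}{a} and \DOFS{F1}{b} for the boundary term. Your additional remarks on the normalization condition $\int_{\partial\P}(\vsh-\PinP{k}\vsh)\dS=0$ and the explicit linear-system formulation are fine but not needed beyond what the paper records.
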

\begin{proof}
  The same argument of Lemma~\ref{lemma:2} is used here.
  We start from the definition of the elliptic projection and we
  integrate by parts:
  \begin{align}
    \int_{\P}\nabla\PinP{k}\vsh\cdot\nabla\qsh\dV
    = \int_{\P}\nabla\vsh\cdot\nabla\qsh\dV
    = -\int_{\P}\vsh\Delta\qsh\dV + \sum_{\E\in\partial\P}\int_{\E}\vsh\norE\cdot\nabla\vsh\dS
    = \TERM{I}{} + \TERM{II}{}.
    \label{eq:lemma3:aux:00}
  \end{align}
  Since in~\eqref{eq:lemma3:aux:00} we take $\qsh\in\PS{k}(\P)$ and
  $\Delta\qsh\in\PS{k-2}(\P)$, term $\TERM{I}{}$ is computable using
  the degrees of freedom \DOFS{F1}{c} of $\vsh$.
  Similarly, since $\restrict{\vsh}{\E}\in\PS{k+1}(\E)$ is computable
  from an interpolation of the degrees of freedom \DOFS{F1}{a} and
  \DOFS{F1}{b}, term $\TERM{II}{}$ is computable.
\end{proof}

\medskip
\noindent
\begin{remark}
  $\PinP{k}\vvh$ is computable componentwisely for every vector-valued
  virtual element field $\vvh\in\Vvhkp(\P)$ and is used in the
  stabilization term of $\asPh(\cdot,\cdot)$, cf.~\eqref{eq:asPh:def}.
\end{remark}

\subsection{Formulation $\FT$}
\label{subsec:second:formulation}

We denote the tangential and normal components of $\vvh$ along the
edge $\E\in\partial\P$ by $\restrict{\vvh}{\E}\cdot\tngE$ and
$\restrict{\vvh}{\E}\cdot\norE$, where $\tngE$ and $\norE$ are the
unit tangential and orthogonal vector of $\E$.
The virtual element space of the second formulation is defined as:
\begin{align}
  \Vvhkpp(\P):=\Big\{
  \vvh\in\big[\HONE(\P)\big]^2:\,
  \restrict{\vvh}{\partial\P}\in\big[\CS{0}(\partial\P)\big]^2,
  \restrict{\vvh}{\E}\cdot\tngE\in\PS{k}(\E),
  \restrict{\vvh}{\E}\cdot\norE\in\PS{k+1}(\E),
  \Delta\vvh\in\big[\PS{k-2}(\P)\big]^2
  \Big\}.
  \label{eq:FT:regular-space:def}
\end{align}
With a small abuse of notation we denote the ``enhanced'' version of
this space with the same symbol ``$\Vvhkpp$'':
\begin{align}
  \Vvhkpp(\P):=\Big\{
  \vvh\in\big[\HONE(\P)\big]^2:\,
  &
  \restrict{\vvh}{\partial\P}\in\big[\CS{0}(\partial\P)\big]^2,
  \restrict{\vvh}{\E}\cdot\tngE\in\PS{k}(\E),
  \restrict{\vvh}{\E}\cdot\norE\in\PS{k+1}(\E),
  \Delta\vvh\in\big[\PS{k}(\P)\big]^2,\,
  \nonumber\\[0.25em]
  &
  \int_{\P}\big(\vvh-\PinP{k}\vvh\big)\cdot\qvh\dV=0\quad\forall\qvh\in\big[\PS{k}(\P)\backslash{\PS{k-2}(\P)}\big]^2
  \Big\},
  \label{eq:FT:enhanced-space:def}
\end{align}
where $\PS{k}(\P)\backslash{\PS{k-2}(\P)}$ is the space of polynomials
of degree exactly equal to $k$ and $k-1$.
This definition uses the elliptic projection operator $\PinP{k}$,
which is computable from the degrees of freedom defined below,
cf.~Lemma~\ref{lemma:6}.

Note that the normal component of $\vvh$ is a polynomial of degree
$k+1$ while the tangential component is a polynomial of degree $k$.
These conditions are reflected by the following degrees of freedom,
which are the same for the virtual element functions defined in
both~\eqref{eq:FT:regular-space:def}
and~\eqref{eq:FT:enhanced-space:def}:


\begin{figure}[!t]
  \centering
  \begin{tabular}{ccc}
    \includegraphics[width=0.28\textwidth]{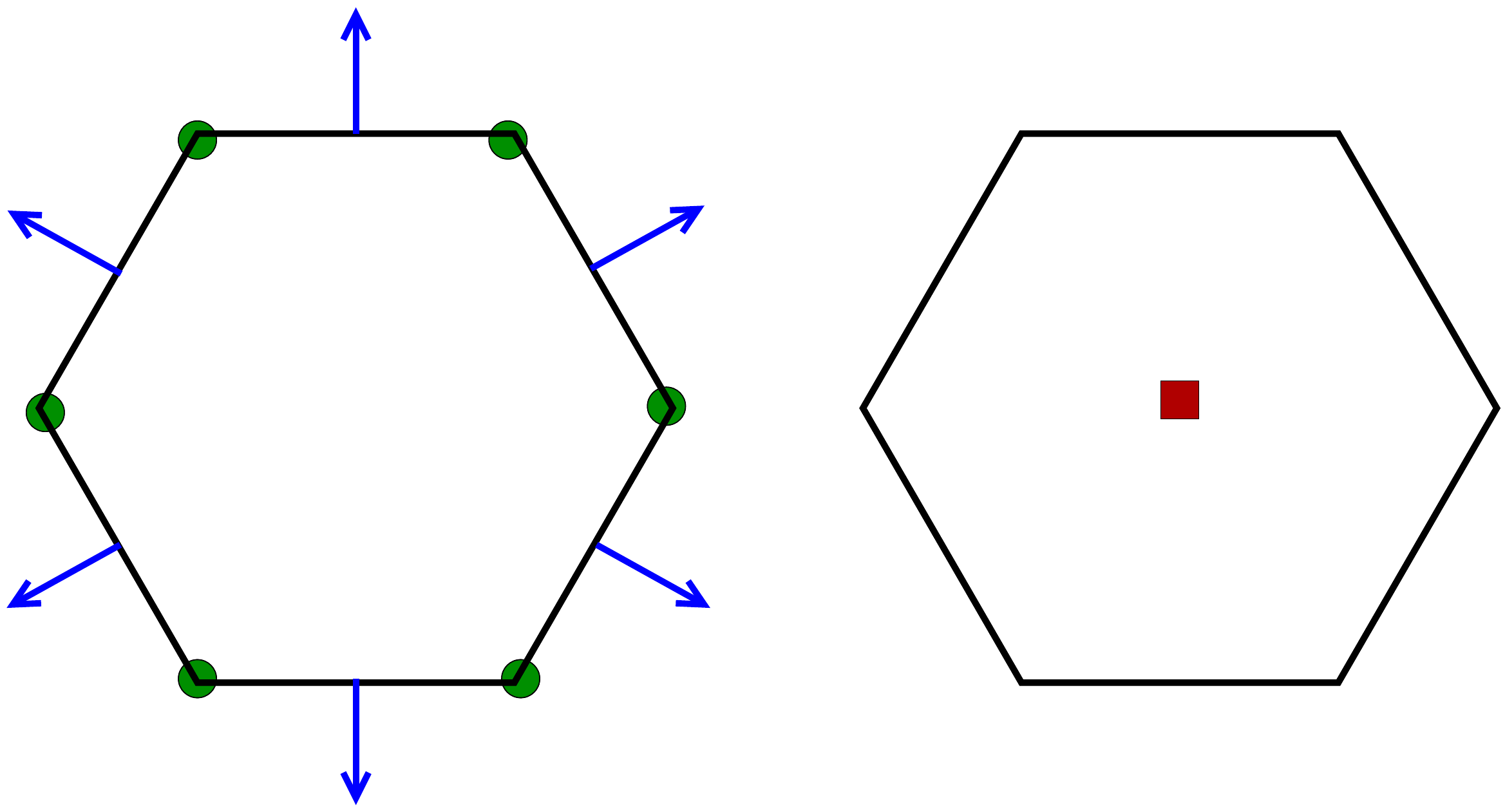} &\qquad   
    \includegraphics[width=0.28\textwidth]{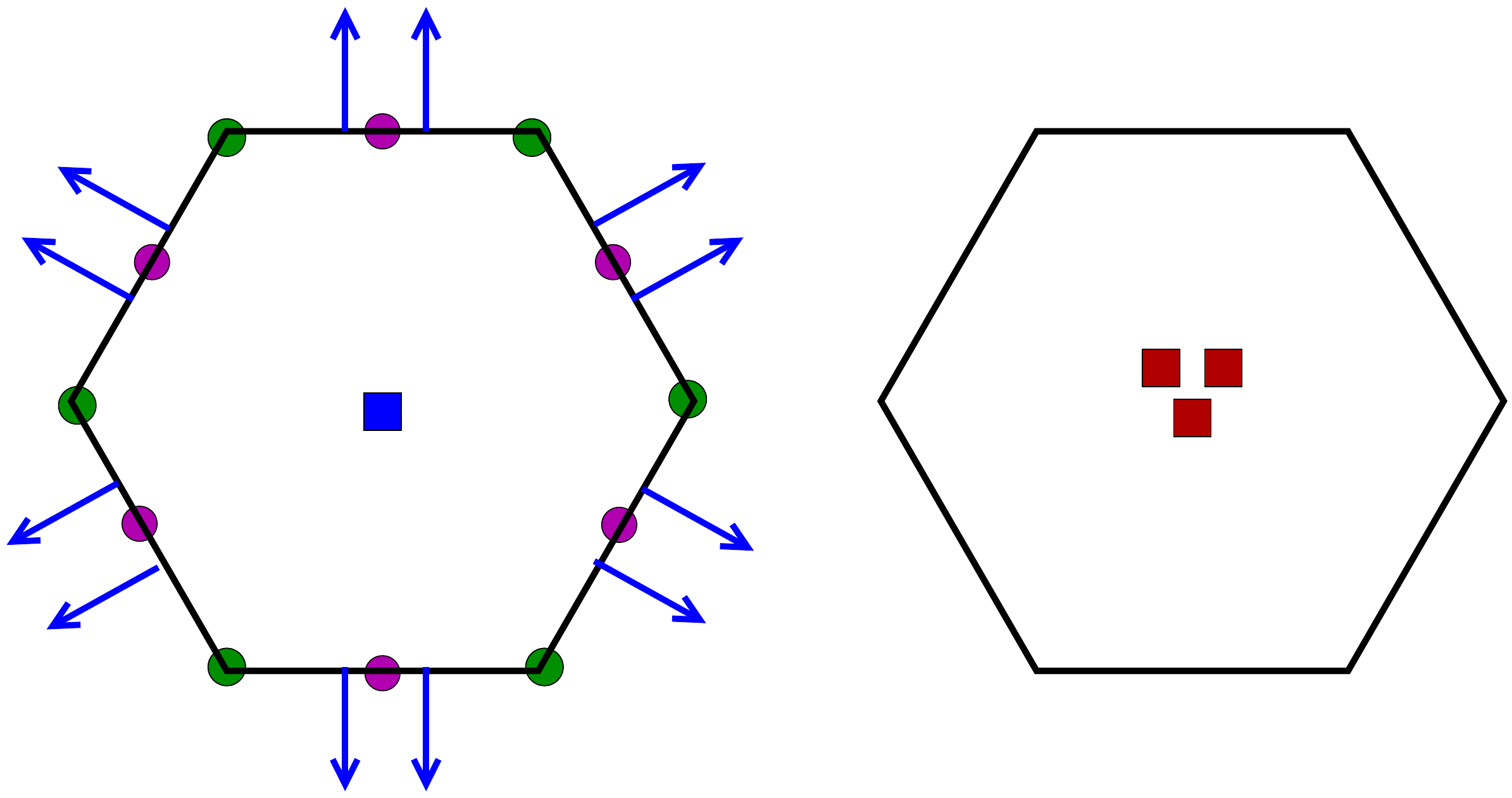} &\qquad   
    \includegraphics[width=0.28\textwidth]{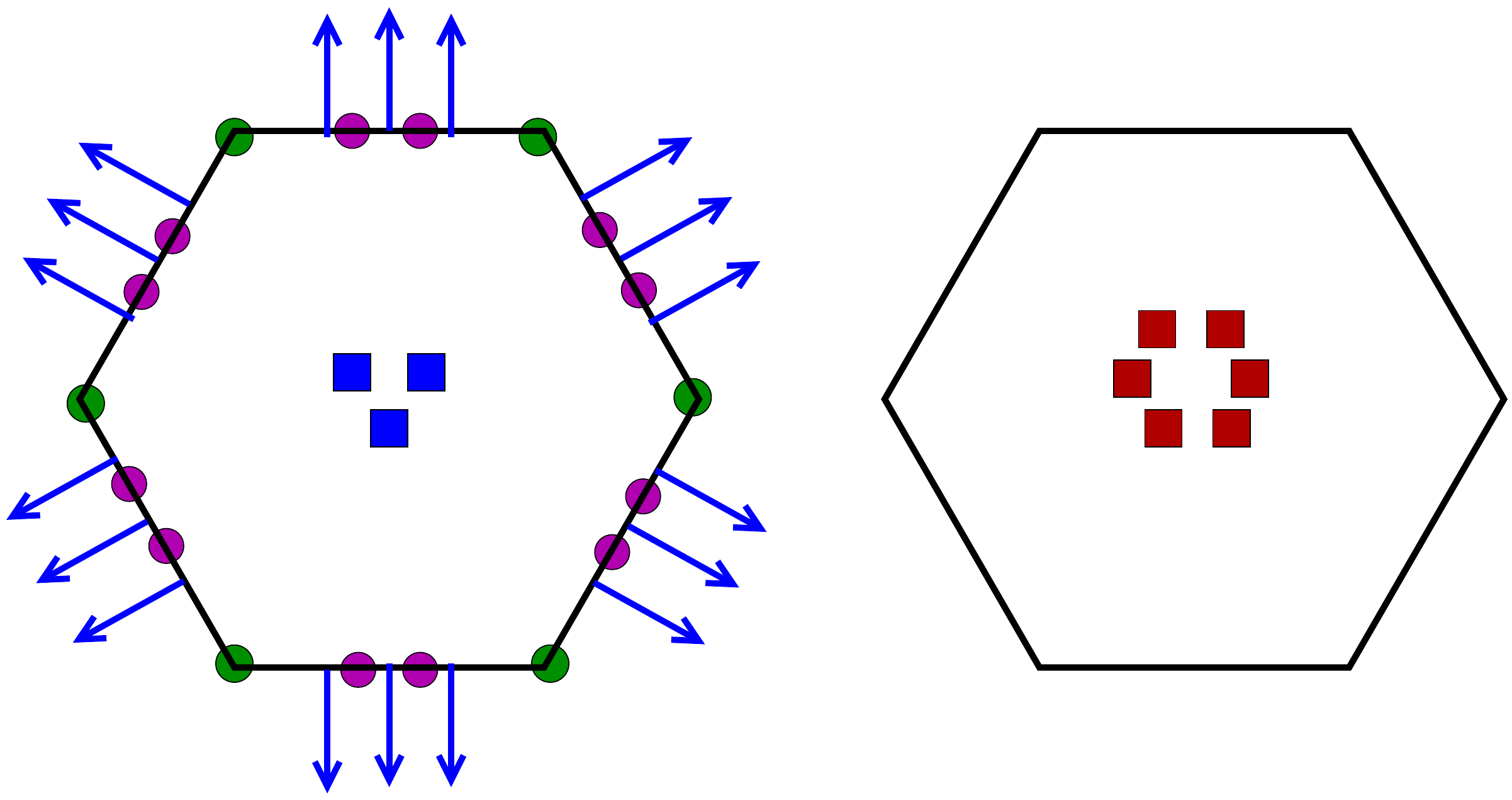} \\[0.5em] 
    \hspace{2mm}$\mathbf{k=1}$ & \hspace{9mm}$\mathbf{k=2}$ & \hspace{9mm}$\mathbf{k=3}$ 
  \end{tabular}
  \caption{Second virtual element formulation: degrees of freedom of
    the virtual element vector-valued fields (left) and the scalar
    polynomial fields (right) of an hexagonal element for the accuracy
    degrees $k=1,2,3$.
    Nodal values are marked by a circular bullet at the vertices; the
    edge moments of the tangential and normal components of the
    vector-valued fields are respectively marked by circular bullets
    and arrows in the interior of the edges.
    Cell polynomial moments for both the vector and scalar fields are
    marked by a square bullet.}
  \medskip
  \label{fig:dofs:FT}
\end{figure}

\medskip
\begin{description}
\item[-]\DOFS{F2}{a} for $k\geq1$, the vertex values $\vvh(\xV)$;
\item[-]\DOFS{F2}{b} for $k\geq1$, the polynomial edge moments of
  $\vvh\cdot\norE$:
  \begin{align}
    \frac{1}{\mE}\int_{\E}\vvh\cdot\norE\qsh\dS
    \qquad \forall \qsh\in\PS{k-1}(\E)
  \end{align}
  for every edge $\E\in\partial\P$;
\item[-]\DOFS{F2}{c} for $k\geq2$, the polynomial edge moments of
  $\vvh\cdot\tngE$:
  \begin{align}
    \frac{1}{\mE}\int_{\E}\vvh\cdot\tngE\qsh\dS
    \qquad \forall \qsh\in\PS{k-2}(\E)
  \end{align}
  for every edge $\E\in\partial\P$;
\item[-]\DOFS{F2}{d} for $k\geq2$, the polynomial cell moments of
  $\vvh$:
  \begin{align}
    \frac{1}{\mP}\int_{\P}\vvh\cdot\qvh\dV
    \qquad \forall\qvh\in\big[\PS{k-2}(\P)\big]^2.
  \end{align}
\end{description}
Figure~\ref{fig:dofs:FT} shows the degrees of freedom of the velocity
vector and the pressure for $k=1,2,3$ on an hexagonal element.

\medskip
\noindent
\begin{remark}
  \label{remark:4}
  In this virtual element space, the normal component of $\vvh$ has an
  increased polynomial degree.
  For example, for $k=1$ the vector field $\vvh\in\Vvhpp{1}(\P)$ is
  such that $\vvh\cdot\norE\in\PS{2}(\E)$ and
  $\vvh\cdot\tngE\in\PS{1}(\E)$ for every edge $\E\in\partial\P$.
  These degrees of freedom are the same used in the low-order MFD
  method of Reference~\cite{BeiraodaVeiga-Gyrya-Lipnikov-Manzini:2009}
  and our VEM is actually a reformulation of this mimetic scheme in
  the variational setting and a generalization to orders of accuracy
  that are higher than one.
  The analysis of the mimetic method and its extension to the
  three-dimensional case is presented
  in~\cite{BeiraodaVeiga-Lipnikov-Manzini:2010} and considers the
  additional edge degrees of freedom as associated with edge bubble
  functions.
\end{remark}

\medskip
\noindent
\begin{remark}
  \label{remark:5}
  Using the degrees of freedom \DOFS{F2}{a} and \DOFS{F2}{b} the
  edge traces $\vvh\cdot\norE\in\PS{k+1}(\E)$ and
  $\vvh\cdot\tngE\in\PS{k}(\E)$ are computable by solving a suitable
  interpolation problem.
  Consider the edge $\E=(\xVp,\xVpp)$ defined by the vertices $\xVp$
  and $\xVpp$.
  Then, 
  \begin{description}
  \item[-] to interpolate $\vvh\cdot\norE\in\PS{k+1}(\E)$ we need
    $k+2$ independent pieces of information, which are provided by
    $\vvh(\xVp)\cdot\norE$, $\vvh(\xVpp)\cdot\norE$ from the degrees
    of freedom \DOFS{F2}{a} and by the $k$ moments of $\vvh\cdot\norE$
    from the degrees of freedom \DOFS{F2}{b};
  \item[-] to interpolate $\vvh\cdot\tngE\in\PS{k}(\E)$ we need $k+1$
    independent pieces of information, which are provided by
    $\vvh(\xVp)\cdot\tngE$, $\vvh(\xVpp)\cdot\tngE$ from the degrees
    of freedom \DOFS{F2}{a} and by the $k-1$ moments of
    $\vvh\cdot\tngE$ from the degrees of freedom \DOFS{F2}{c}.
  \end{description}
\end{remark}

\medskip
\noindent
\begin{lemma}[Unisolvence of the degrees of freedom]
  \label{lemma:BF:unisolvence}
  The degrees of freedom \DOFS{F2}{a}-\DOFS{F2}{d} are unisolvent
  for both the regular and enhanced definition of $\Vvhkpp(\P)$,
  respectively given in~\eqref{eq:FT:regular-space:def}
  and~\eqref{eq:FT:enhanced-space:def}.
  \end{lemma}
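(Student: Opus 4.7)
The plan is to mirror the argument of Lemma~\ref{lemma:F1:unisolvence}, adapted to the vector-valued setting with the asymmetric normal--tangential splitting of the edge traces. First I would verify the dimension count. For the regular definition~\eqref{eq:FT:regular-space:def}, the boundary data on each edge consists of a polynomial of degree $k+1$ in the normal direction and of degree $k$ in the tangential direction, i.e.\ $k+2$ and $k+1$ parameters respectively, tied together at vertices by the continuity requirement $\restrict{\vvh}{\partial\P}\in\big[\CS{0}(\partial\P)\big]^2$. Together with the $2N_{k-2}$ internal parameters induced by $\Delta\vvh\in[\PS{k-2}(\P)]^{2}$, this yields exactly $2\NPV+(2k-1)\NPE+2N_{k-2}$ parameters, matching the cardinality of \DOFS{F2}{a}--\DOFS{F2}{d}. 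For the enhanced definition~\eqref{eq:FT:enhanced-space:def} the Laplacian lives in the larger space $[\PS{k}(\P)]^{2}$, but the extra orthogonality constraint against $[\PS{k}(\P)\backslash\PS{k-2}(\P)]^{2}$ restores the same total dimension.

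Next I would assume all degrees of freedom vanish and show $\vvh\equiv\mathbf{0}$. By Remark~\ref{remark:5}, the traces $\vvh\cdot\norE$ and $\vvh\cdot\tngE$ are uniquely determined on each edge $\E\in\partial\P$ from \DOFS{F2}{a}, \DOFS{F2}{b}, and \DOFS{F2}{c} by standard polynomial interpolation, so the vanishing of these DoFs forces $\vvh=\mathbf{0}$ on $\partial\P$. Applying the vector Green identity componentwise,
\begin{align*}
  \int_{\P}\ABS{\nabla\vvh}^{2}\dV = -\int_{\P}\vvh\cdot\Delta\vvh\dV + \sum_{\E\in\partial\P}\int_{\E}\vvh\cdot\big((\nabla\vvh)\,\norE\big)\dS,
\end{align*}
in which the boundary integral vanishes.

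I would then split the analysis of the interior term according to the two forms of the local space. In the regular case $\Delta\vvh\in[\PS{k-2}(\P)]^{2}$, so $\int_{\P}\vvh\cdot\Delta\vvh\dV$ is a linear combination of the cell moments \DOFS{F2}{d} and is zero. In the enhanced case $\Delta\vvh\in[\PS{k}(\P)]^{2}$; decomposing $\Delta\vvh=\qvh_{1}+\qvh_{2}$ with $\qvh_{1}\in[\PS{k-2}(\P)]^{2}$ (killed by \DOFS{F2}{d}) and $\qvh_{2}\in[\PS{k}(\P)\backslash\PS{k-2}(\P)]^{2}$, the enhancement constraint in~\eqref{eq:FT:enhanced-space:def} replaces $\int_{\P}\vvh\cdot\qvh_{2}\dV$ by $\int_{\P}\PinP{k}\vvh\cdot\qvh_{2}\dV$. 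Repeating the integration-by-parts computation of Lemma~\ref{lemma:3} componentwise shows that $\PinP{k}\vvh$ is computable from \DOFS{F2}{a}--\DOFS{F2}{c} alone, and therefore vanishes identically under our assumption. Consequently $\int_{\P}\ABS{\nabla\vvh}^{2}\dV=0$, so $\vvh$ is constant on $\P$, and is zero since its boundary trace vanishes. Combined with the dimension count, this proves unisolvence.

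The main technical obstacle I anticipate is verifying that the vector-valued elliptic projection $\PinP{k}\vvh$ in the enhanced case is indeed determined by the boundary degrees of freedom, since this relies on careful bookkeeping of the normal/tangential decomposition of the boundary integral $\int_{\partial\P}\vvh\cdot\big((\nabla\qvh)\,\nor\big)\dS$ together with the fact that $\Delta\qvh\in[\PS{k-2}(\P)]^{2}$ for $\qvh\in[\PS{k}(\P)]^{2}$, so that the volume contribution is captured by \DOFS{F2}{d}. Once this verification is in place, the remainder is a routine transcription of the scalar arguments in~\cite{BeiraodaVeiga-Brezzi-Cangiani-Manzini-Marini-Russo:2013,Ahmad-Alsaedi-Brezzi-Marini-Russo:2013}.
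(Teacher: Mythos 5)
Your proposal is correct and follows essentially the same route as the paper: vanishing degrees of freedom force the edge traces to zero via the normal/tangential interpolation of Remark~\ref{remark:5}, the energy identity $\int_{\P}\ABS{\nabla\vvh}^2\dV=\TERM{I}{}+\TERM{II}{}$ then kills $\nabla\vvh$, and the dimension count closes the argument; the paper handles the enhanced space only by citation to \cite{Ahmad-Alsaedi-Brezzi-Marini-Russo:2013}, whereas you spell out the decomposition of $\Delta\vvh$ and the use of the enhancement constraint, which is a welcome addition. One minor imprecision: $\PinP{k}\vvh$ is computable from \DOFS{F2}{a}--\DOFS{F2}{d}, not from \DOFS{F2}{a}--\DOFS{F2}{c} alone (the volume term $\int_{\P}\vvh\cdot\Delta\qvh\dV$ needs the cell moments for $k\geq2$), but since all degrees of freedom are assumed to vanish this does not affect your conclusion.
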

\begin{proof}
  The argument that we use to prove the assertion of the lemma is
  similar to the one used to prove the unisolvency of the degrees of
  freedom of the first formulation.
  First, consider a vector field in the virtual element space defined
  in~\eqref{eq:FT:regular-space:def}.
  An integration by parts yields:
  \begin{align}
    \int_{\P}\abs{\nabla\vvh}^2\dV
    = -\int_{\P}\vvh\cdot\Delta\vvh\dV + \sum_{\E\in\partial\P}\int_{\E}\vvh\cdot\nabla\vvh\cdot\norE\dS
    = \TERM{I}{} + \TERM{II}{}.
  \end{align}
  Next, we assume that all the degrees of freedom in \DOFS{F2}{a},
  \DOFS{F2}{b}, \DOFS{F2}{c}, and \DOFS{F2}{d} are zero.
  Then,
  \begin{description}
  \item[-] $\TERM{I}{}$ is zero because
    $\Delta\vvh\in\big[\PS{k-2}(\P)\big]^2$, and, hence, it is a
    degree of freedom of type \DOFS{F2}{d} for $k\geq2$ or zero for
    $k=1$;
  \item[-] to see that $\TERM{II}{}$ is also zero, we use the
    orthogonal decomposition
    $\vvh=(\vvh\cdot\norE)\norE+(\vvh\cdot\tngE)\tngE$ and note that
    $\vvh\cdot\norE=0$ and $\vvh\cdot\tngE=0$ since these traces are
    computed by the interpolation of the degrees of freedom
    \DOFS{F2}{a}, \DOFS{F2}{b}, and \DOFS{F2}{c}, and these data are
    zero by hypothesis.
    Therefore, $\restrict{\vvh}{\E}=0$ on every edge $\E\in\partial\P$
    and all the edge integrals of $\TERM{II}{}$ must be zero.
  \end{description}
  It follows that $\nabla\vvh=0$, i.e., all the spatial derivatives of
  the components of $\vvh$ are zero.
  Therefore, the vector-valued field $\vvh$ is constant on $\P$ and
  since all its degrees of freedom are zero the constant must be zero.
  The assertion of the lemma is finally proved by noting that the
  number of the degrees of freedom is equal to the dimension of
  $\Vvhkpp(\P)$.
  The unisolvence of the degrees of freedom \DOFS{F2}{a}-\DOFS{F2}{d}
  for the enhanced space defined in~\eqref{eq:FT:enhanced-space:def}
  follows by similarly adjusting the argument that is used in the
  proof
  of~\cite[Proposition~2]{Ahmad-Alsaedi-Brezzi-Marini-Russo:2013}.
\end{proof}

\medskip
\noindent
\begin{lemma}
  \label{lemma:5}
  Let $\P$ be an element of mesh $\Th$.
  For every virtual element function $\vvh\in\Vvhkpp(\P)$, the
  polynomial projection $\PizP{k-1}\nabla\vvh$ is computable from the
  degrees of freedom \DOFS{F2}{a}, \DOFS{F2}{b}, and \DOFS{F2}{c} of
  $\vv$.
\end{lemma}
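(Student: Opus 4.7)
The plan is to proceed by adapting the strategy of Lemma~\ref{lemma:2} to the tensor-valued projection $\PizP{k-1}\nabla\vvh$ and to the normal/tangential split that is inherent in the second formulation. I would start from the defining variational problem~\eqref{eq:tensor:proj:L2:P:def}, so that for every tensor polynomial $\bm\kappa\in[\PS{k-1}(\P)]^{2\times 2}$ I have
\begin{equation*}
\int_{\P} \PizP{k-1}\nabla\vvh : \bm\kappa \dV = \int_{\P} \nabla\vvh : \bm\kappa \dV.
\end{equation*}
A row-wise integration by parts then transforms the right-hand side into
\begin{equation*}
\int_{\P} \nabla\vvh : \bm\kappa \dV = -\int_{\P} \vvh \cdot (\DIV \bm\kappa) \dV + \sum_{\E\in\partial\P} \int_{\E} \vvh \cdot (\bm\kappa\,\norPE) \dS = \TERM{I}{} + \TERM{II}{}.
\end{equation*}

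For the volume contribution $\TERM{I}{}$, I would observe that $\DIV\bm\kappa \in [\PS{k-2}(\P)]^2$, with the convention $[\PS{-1}(\P)]^2=\{0\}$. Hence $\TERM{I}{}$ either vanishes when $k=1$, or reduces to a linear combination of cell polynomial moments of $\vvh$ against $[\PS{k-2}(\P)]^2$ when $k\geq 2$; in the latter case these moments are precisely the internal degrees of freedom \DOFS{F2}{d}, which the statement implicitly uses in analogy with the role played by \DOFS{F1}{c} in Lemma~\ref{lemma:2}.

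For each edge contribution in $\TERM{II}{}$, I would decompose $\vvh$ orthogonally on $\E$ into its normal and tangential components, using the fact that $\{\norPE,\tngPE\}$ is an orthonormal frame, and write
\begin{equation*}
\int_{\E} \vvh \cdot (\bm\kappa\,\norPE) \dS = \int_{\E} (\vvh\cdot\norPE)(\norPE \cdot \bm\kappa\,\norPE) \dS + \int_{\E} (\vvh\cdot\tngPE)(\tngPE \cdot \bm\kappa\,\norPE) \dS.
\end{equation*}
The scalar factors $\norPE \cdot \bm\kappa\,\norPE$ and $\tngPE \cdot \bm\kappa\,\norPE$ are explicitly known polynomials on $\E$ of degree at most $k-1$. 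By Remark~\ref{remark:5}, the trace $\vvh \cdot \norPE \in \PS{k+1}(\E)$ is recoverable by interpolation from \DOFS{F2}{a} and \DOFS{F2}{b}, while $\vvh \cdot \tngPE \in \PS{k}(\E)$ is recoverable by interpolation from \DOFS{F2}{a} and \DOFS{F2}{c}. Both edge integrals are therefore computable from the prescribed degrees of freedom.

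I expect the main subtlety to be the bookkeeping of polynomial degrees: specifically, the enriched normal polynomial degree $k+1$ on each edge dovetails exactly with the edge polynomial degree $k-1$ arising from $\norPE \cdot \bm\kappa\,\norPE$, so that no information is lost after integration by parts and the known moments of the edge traces suffice to evaluate the line integrals. Apart from this, the argument is a direct tensor-valued version of Lemma~\ref{lemma:2} and no additional analytical ingredient is required.
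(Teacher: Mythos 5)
Your proof is correct and follows essentially the same route as the paper's: the same integration by parts, the same identification of the volume term with the internal moments \DOFS{F2}{d} (which, as you rightly note, the lemma statement omits from its list even though they are needed for $k\geq2$), and the same normal/tangential decomposition of the edge trace recovered via Remark~\ref{remark:5}. Your explicit splitting of the edge integrand into the two scalar factors $\norPE\cdot\bm\kappa\,\norPE$ and $\tngPE\cdot\bm\kappa\,\norPE$ is just a slightly more detailed rendering of the paper's appeal to the decomposition $\vvh=(\vvh\cdot\norE)\norE+(\vvh\cdot\tngE)\tngE$.
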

\begin{proof}
  We start from the definition of the orthogonal projection:
  \begin{align}
    \int_{\P}\PizP{k-1}\nabla\vvh:\btauh\dV
    = \int_{\P}\nabla\vvh:\btauh\dV
    \qquad\forall\btauh\in[\PS{k-1}(\P)]^{2\times2}.
  \end{align}
  To prove that the right-hand side is computable from the degrees of
  freedom of $\vvh$, we integrate by parts:
  \begin{align}
    \int_{\P}\nabla\vvh:\btauh\dV
    =
    -\int_{\P}\vvh\cdot\DIV\btauh\dV
    +\sum_{\E\in\partial\P}\int_{\E}\vvh\cdot\btauh\cdot\norE\dS
    = \TERM{I}{} + \TERM{II}{}.
  \end{align}
  Since $\DIV\btauh\in\big[\PS{k-2}(\P)\big]^2$, term $\TERM{I}{}$ is
  computable using the values \DOFS{F2}{d} of $\vvh$.
  Then, we observe that the traces
  $\restrict{\vvh}{\E}\cdot\norE\in\PS{k+1}(\E)$ and
  $\restrict{\vvh}{\E}\cdot\tngE\in\PS{k}(\E)$ are computable from the
  degrees of freedom \DOFS{F2}{a}-\DOFS{F2}{c}.
  On using the decomposition
  $\vvh=(\vvh\cdot\norE)\norE+(\vvh\cdot\tngE)\tngE$, we conclude that
  the trace $\restrict{\vvh}{\E}$ is computable.
  Therefore, all edge integrals and ultimately $\TERM{II}{}$ are
  computable.
\end{proof}

\medskip
\noindent
\begin{lemma}
  \label{lemma:6}
  Let $\P$ be an element of mesh $\Th$.
  For every virtual element function $\vvh\in\Vvhkpp(\P)$, the
  polynomial projection $\PinP{k}\vvh\in\big[\PS{k-1}(\P)\big]^{2}$ is
  computable from the degrees of freedom of $\vvh$.
\end{lemma}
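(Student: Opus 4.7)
The plan is to adapt the argument of Lemma~\ref{lemma:3} to the vector-valued setting, reducing the problem to checking that the two pieces of information that determine $\PinP{k}\vvh$, namely the integral $\int_{\P}\nabla\vvh:\nabla\qvh\,\dV$ for every $\qvh\in\bigl[\PS{k}(\P)\bigr]^{2}$ (which fixes $\PinP{k}\vvh$ up to a constant vector via the variational problem analogous to~\eqref{eq:proj:H1:P:def}) and the boundary-average condition $\int_{\partial\P}(\vvh-\PinP{k}\vvh)\dS=0$ (which fixes the additive constant componentwise), are both computable from the degrees of freedom \DOFS{F2}{a}--\DOFS{F2}{d}.

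For the first piece I integrate by parts,
\[
\int_{\P}\nabla\vvh:\nabla\qvh\,\dV
= -\int_{\P}\vvh\cdot\Delta\qvh\,\dV
+ \sum_{\E\in\partial\P}\int_{\E}\vvh\cdot\bigl(\nabla\qvh\cdot\norE\bigr)\,\dS
= \TERM{I}{} + \TERM{II}{}.
\]
Since $\qvh\in\bigl[\PS{k}(\P)\bigr]^{2}$ forces $\Delta\qvh\in\bigl[\PS{k-2}(\P)\bigr]^{2}$, the term $\TERM{I}{}$ is either zero (when $k=1$) or a linear combination of the cell moments \DOFS{F2}{d}, hence computable. For $\TERM{II}{}$, I decompose the trace on each edge as $\restrict{\vvh}{\E}=(\vvh\cdot\norE)\norE+(\vvh\cdot\tngE)\tngE$; by Remark~\ref{remark:5}, the polynomials $\vvh\cdot\norE\in\PS{k+1}(\E)$ and $\vvh\cdot\tngE\in\PS{k}(\E)$ are recovered by interpolation from \DOFS{F2}{a}, \DOFS{F2}{b} and \DOFS{F2}{c}, while the polynomial factor $\nabla\qvh\cdot\norE$ is explicitly known on $\E$, so each edge integral is a computable polynomial pairing on $\E$.

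For the second piece, $\int_{\partial\P}\vvh\,\dS=\sum_{\E\in\partial\P}\int_{\E}\restrict{\vvh}{\E}\,\dS$ is, componentwise, the sum of edge integrals of the interpolated polynomial trace $\restrict{\vvh}{\E}$, and is therefore computable from the same boundary degrees of freedom \DOFS{F2}{a}--\DOFS{F2}{c}. Combining the two, the full variational system that defines $\PinP{k}\vvh$ has a right-hand side computable from \DOFS{F2}{a}--\DOFS{F2}{d}, and solving the resulting finite linear system yields the coefficients of $\PinP{k}\vvh$ in any chosen polynomial basis.

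There is no real obstacle beyond the trace-recovery step: the only delicate point is that the normal component has degree $k+1$, which is precisely what is needed to pair against the polynomial $\nabla\qvh\cdot\norE$ (restricted to $\E$) without losing information, whereas the lower degree $k$ on the tangential component suffices because that contribution is paired against quantities of no higher degree. This is exactly the design principle behind the space~\eqref{eq:FT:regular-space:def}/\eqref{eq:FT:enhanced-space:def}, so the argument goes through for both the regular and the enhanced version of $\Vvhkpp(\P)$.
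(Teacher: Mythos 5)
Your proof is correct and follows essentially the same route as the paper's: integrate by parts, observe that the volume term pairs $\vvh$ against $\Delta\qvh\in\big[\PS{k-2}(\P)\big]^2$ and is therefore a combination of the moments \DOFS{F2}{d}, and recover the edge traces from \DOFS{F2}{a}--\DOFS{F2}{c} via the decomposition $\vvh=(\vvh\cdot\norE)\norE+(\vvh\cdot\tngE)\tngE$ to compute the boundary term; your additional verification that the normalization condition fixing the constant vector is also computable from the boundary degrees of freedom is a welcome detail the paper leaves implicit. One small caveat: your closing remark that the degree $k+1$ of the normal trace is ``precisely what is needed'' to pair against $\nabla\qvh\cdot\norE$ misattributes the design choice --- computability only requires the trace polynomials to be fully determined by the degrees of freedom, whatever their degrees, and the extra degree on the normal component is really there for the Fortin/inf-sup construction --- but this aside does not affect the validity of the argument.
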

\begin{proof}
  Consider the definition of the elliptic projection operator:
  \begin{align}
    \int_{\P}\nabla\PinP{k}\vvh:\nabla\qvh\dV =
    \int_{\P}\nabla\vvh:\nabla\qvh\dV
    \qquad\qvh\in\big[\PS{k}(\P)\big]^2.
  \end{align}
  We integrate the right-hand side by parts:
  \begin{align}
    \int_{\P}\nabla\vvh:\nabla\qvh\dV
    =
    - \int_{\P}\vvh\cdot\Delta\qvh\dV
    + \sum_{\E\in\partial\P}\int_{\E}\vvh\cdot\nabla\qvh\cdot\norE\dS
    = \TERM{I}{} + \TERM{II}{}.
  \end{align}
  Since we take $\qvh\in\big[\PS{k}(\P)\big]^2$ and
  $\Delta\qvh\in\big[\PS{k-2}(\P)\big]^2$, the first integral in term
  $\TERM{I}{}$ is the moment of $\vvh$ against a vector polynomial of
  degree $k-2$ and is, thus, computable using the degrees of freedom
  of $\vvh$ provided by \DOFS{F2}{d}.
  Then, we observe that the traces
  $\restrict{\vvh}{\E}\cdot\norE\in\PS{k+1}(\E)$ and
  $\restrict{\vvh}{\E}\cdot\tngE\in\PS{k}(\E)$ are computable from the
  degrees of freedom \DOFS{F2}{a}-\DOFS{F2}{c}.
  On using the decomposition
  $\vvh=(\vvh\cdot\norE)\norE+(\vvh\cdot\tngE)\tngE$, also the trace
  $\restrict{\vvh}{\E}$ is computable, cf. Remark~\ref{remark:5}.
  Therefore, all edge integrals and ultimately $\TERM{II}{}$ are
  computable.
\end{proof}

\section{Wellposedness and convergence analysis}
\label{sec:convergence}

In this section, we first prove the wellposedness of the two virtual
element formulations of Section~\ref{sec:VEM}.
Then, we prove that these two formulations are convergent and we
derive error estimates in the energy norm and the $\LTWO$ norm for the
velocity field and the $\LTWO$ norm for the pressure field.
The analysis is the same for both formulations $\FO$ and $\FT$,
regardless of using the non-enhanced or the enhanced definition of the
virtual element space.
For this reason, we use the generic symbol $\Vvhk(\P)$ to refer to the
two virtual element spaces introduced in Section~\ref{sec:VEM}, i.e.,
$\Vvhkp(\P)$ and $\Vvhkpp(\P)$.

Hereafter, we use the capitol letter ``$\Cs$'' to denote a generic
constant that is independent of $\hh$ but may depend on the other
parameters of the discretization, e.g., the polynomial degree $k$, the
mesh regularity constant $\rho$, the stability constants $\alpha_*$
and $\alpha^*$, etc.
The constant $\Cs$ may take a different value at any occurrence.

In some mathematical proofs, we may find it convenient to write
``$A\STACKON{=}{(X)}B$'' to mean that ``$A=B$ follows from equation
(X)'', i.e., to stack the equation reference number on the symbols
``$=$'', ``$\leq$'', ``$\geq$``etc.

\subsection{Wellposedness of the virtual element approximation}
\label{subsec:well-posedness}

To prove the wellposedness of our formulations, we must verify that
the virtual element space $\Vvhk$
and the discontinuous polynomial space $\Qshkk$
are such that: $(i)$ the bilinear form $\ash(\cdot,\cdot)$ is bounded
and coercive; $(ii)$ the bilinear form $\bsh(\cdot,\cdot)$ is bounded
and satisfies the inf-sup condition.
Properties $(i)$ are the immediate consequence of the stability
property~\eqref{eq:ash:stability} and the Cauchy-Schwarz inequality,
which imply~\eqref{eq:ash:coercivity}
and~\eqref{eq:ash:global:continuity}.
We rewrite these two inequalities here for the reader's convenience:
\begin{align}
  \ABS{\ash(\vvh,\wvh)}
  &\leq
  \alpha^*\snorm{\vvh}{1,\Omega}\,\snorm{\wvh}{1,\Omega}
  \phantom{\ash(\vvh,\vvh)}\hspace{-0.5cm}
  \forall\vvh,\,\wvh\in\Vvhk,
  \label{eq:continuity}
  \\[0.5em]
  \alpha_*\snorm{\vvh}{1,\Omega}^2
  &\leq
  \ash(\vvh,\vvh)
  \phantom{\Cs\snorm{\vvh}{1,\Omega}\,\snorm{\wvh}{1,\Omega}}\hspace{-0.5cm}
  \forall\vvh\in\Vvhk.
  \label{eq:coercivity}
\end{align}
Similarly, we can readily prove the boundedness of the bilinear form
$\bsh(\cdot,\cdot)$ by using the Cauchy-Schwarz inequality, so that
\begin{align*}
  \ABS{\bsh(\vvh,\qsh)}\leq\sqrt{2}\snorm{\vvh}{1,\Omega}\,\norm{\qsh}{0,\Omega}
  \qquad\forall\vvh\in\Vvhk,\,\qsh\in\PS{k-1}(\Th).
\end{align*}
Instead, the discrete inf-sup condition is proved in the following
lemma, which relies on the construction of a suitable Fortin operator,
see~\cite{Boffi-Brezzi-Fortin:2013}.
The construction of this operator is the same for both the regular and
the enhanced versions of formulations $\FO$ and $\FT$.

\medskip
\begin{lemma}[Inf-sup condition]
  \label{lemma:inf-sup:condition}
  The bilinear form $\bsh(\cdot,\cdot)$ is \emph{inf-sup stable} on
  $\Vvhk\times\Qshkk$ for the formulations \FO{} and \FT{} and for any
  given polynomial degree $k\geq1$.
\end{lemma}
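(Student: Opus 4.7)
The plan is to invoke the classical Fortin trick~\cite{Boffi-Brezzi-Fortin:2013}: combined with the continuous inf-sup~\eqref{eq:exact:inf-sup}, it suffices to construct an $\hh$-uniformly bounded operator $\Pi_F:[\HONEzr(\Omega)]^2\to\Vvhk$ enjoying the commuting property
\begin{align*}
  \bsh(\Pi_F\vv,\qsh)=\bs(\vv,\qsh)\qquad\forall\vv\in[\HONEzr(\Omega)]^2,\ \forall\qsh\in\Qshkk.
\end{align*}
Once $\Pi_F$ is available, the discrete inf-sup follows in one line: given $\qsh\in\Qshkk$, the continuous inf-sup furnishes $\vv$ with $\bs(\vv,\qsh)\geq\beta\snorm{\vv}{1,\Omega}\norm{\qsh}{0,\Omega}$, and then $\Pi_F\vv\in\Vvhk$ realizes the discrete supremum with inf-sup constant $\beta_h=\beta/\Cs$, where $\Cs$ is the continuity constant of $\Pi_F$.

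I would construct $\Pi_F\vv$ by prescribing its degrees of freedom so as to enforce the two matching conditions
\begin{align*}
  \int_{\E}(\Pi_F\vv-\vv)\cdot\norE\,\qs\,ds&=0\qquad\forall\qs\in\PS{k-1}(\E),\ \forall\E\in\Eset,\\
  \int_{\P}(\Pi_F\vv-\vv)\cdot\bm{\psi}\,d\xv&=0\qquad\forall\bm{\psi}\in[\PS{k-2}(\P)]^2,\ \forall\P\in\Th,
\end{align*}
together with a Cl\'ement-type choice of the vertex values (since a generic $\HONE$-function does not admit point evaluation in two dimensions). For $\FT$, the first condition is imposed directly via the dofs \DOFS{F2}{b} and the second via \DOFS{F2}{d}; the tangential-edge dofs \DOFS{F2}{c} can be freely matched to $\vv\cdot\tngE$. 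For $\FO$, the normal-moment condition is enforced by matching the component-wise edge moments \DOFS{F1}{b}---this is precisely where the enhanced edge-trace degree $k+1$ becomes essential, since with trace degree $k$ the edge dofs could only control moments against $\PS{k-2}(\E)$, one order too low to absorb the full pressure space.

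The commuting property is then immediate: invoking~\eqref{eq:bsh=bs} to replace $\bsh$ with $\bs$ on $\qsh\in\PS{k-1}(\Th)$, an elementwise integration by parts on $\bs(\Pi_F\vv-\vv,\qsh)$ produces a volumetric term against $\nabla\qsh\in[\PS{k-2}(\P)]^2$, which vanishes by the cell condition, and an interfacial term against $\restrict{\qsh}{\E}\in\PS{k-1}(\E)$, which vanishes edge-by-edge by the normal-moment condition (the boundary contributions disappear because $\Pi_F\vv,\vv\in[\HONEzr(\Omega)]^2$, and on interior edges the jump of $\qsh$ still lies in $\PS{k-1}(\E)$).

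The main obstacle is the $\hh$-uniform continuity bound $\snorm{\Pi_F\vv}{1,\Omega}\leq\Cs\snorm{\vv}{1,\Omega}$. I would follow the standard two-stage argument: first, a Cl\'ement-type quasi-interpolant $\vv_1\in\Vvhk$ driven by the boundary dofs is bounded in $\HONE$-seminorm by $\snorm{\vv}{1,\Omega}$ via scaled trace and local polynomial approximation inequalities on star-shaped polygons; second, a cell correction $\vv_2\in\Vvhk$ with vanishing boundary dofs enforces the cell-moment condition, and its seminorm is controlled elementwise by an inverse inequality combined with a local Poincar\'e-type bound applied to $\vv-\vv_1$. Both steps rely crucially on the mesh regularity \textbf{(M1)}--\textbf{(M2)}, and the construction is insensitive to whether the regular or the enhanced definition of the virtual element space is used, so the argument covers all four cases of $\FO$ and $\FT$ simultaneously.
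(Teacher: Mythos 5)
Your proposal follows essentially the same route as the paper: both build a Fortin operator by combining a Cl\'ement/quasi-interpolation step with a degree-of-freedom correction that matches the edge normal moments against $\PS{k-1}(\E)$ and the cell moments against $\big[\PS{k-2}(\P)\big]^2$, verify the commuting property by elementwise integration by parts together with the identity $\bsh=\bs$ on $\PS{k-1}(\Th)$, and conclude from the continuous inf-sup condition~\eqref{eq:exact:inf-sup}. One small imprecision in your continuity argument: the correction $\vv_2$ cannot have vanishing boundary degrees of freedom, since it must also restore the edge normal moments that the quasi-interpolant $\vv_1$ does not reproduce; this is precisely why the paper writes the Fortin operator as $\PiF\vv=\Pio\vv+\Pit(1-\Pio)\vv$ with the moment-matching operator $\Pit$ acting on the residual $(1-\Pio)\vv$, so that both the edge and cell moment corrections scale correctly with $\hh$.
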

\begin{proof}
  The proof is essentially based on the construction of a Fortin
  operator $\PiF:\big[\HONE(\Omega)\big]^2\to\Vvhk$ such that 
  \begin{align}
    \bs(\vv,\qsh) &= \bsh(\PiF\vv,\qsh)\qquad\forall\qsh\in\PS{k-1}(\Th),\label{eq:Fortin:bs=bsh}\\[0.5em]
    \norm{\PiF\vv}{1,\Omega} &\leq \norm{\vv}{1,\Omega},\label{eq:Fortin:boundedness}
  \end{align}
  for all $\vv\in\big[\HONE(\Omega)\big]^2$, cf.,
  e.g.,~\cite{Boffi-Brezzi-Fortin:2013}.
  As the proof is based on rather standard arguments, see e.g.,
  \cite[Proposition~3.1]{BeiraodaVeiga-Lovadina-Vacca:2017}, we only
  briefly mention its three main steps.

  \medskip
  In the first step, reasoning as
  in~\cite[Proposition~4.2]{Mora-Rivera-Rodriguez:2015} for the
  non-enhanced virtual element space and~\cite[Theorem~5 (case
    $d=2$)]{Cangiani-Georgoulis-Pryer-Sutton:2016} for the enhanced
  virtual element space, we can prove the existence of a
  quasi-interpolation operator
  $\PioP:\big[\HS{s+1}(\P)\big]^2\to\Vvhk(\P)$, $0\leq\ss\leq\ks$ for
  all elements $\P\in\Th$ such that
  \begin{align*}
    \norm{\vv-\PioP\vv}{0,\P} + \hP\snorm{\vv-\PioP\vv}{1,\P} \leq
    \Cs\hP^{s+1}\snorm{\vv}{s+1,\P}.
  \end{align*}
  Adding all elemental contributions, it is easy to see that
  \begin{align*}
    \norm{\vv-\Pio\vv}{1,\Omega} \leq \Cs \norm{\vv}{1,\Omega},
  \end{align*}
  where $\Pio:\big[\HS{s+1}(\Omega)\big]^2\to\Vvhk$ is the global
  quasi-interpolation operator such that
  $\restrict{\big(\Pio\vv\big)}{\P}=\PioP(\restrict{\vv}{\P})$ for all
  $\P\in\Th$.

  \medskip
  In the second step, for any $\vv\in\big[\HONE(\Omega)\big]^2$ we
  consider a vector-valued virtual element function $\vvh$ such that
  
  \medskip
  \begin{description}
  \item[$(i)$] for $k\geq1$, for all mesh edges $\E$, it holds that
    \begin{align}
      \int_{\E}\qsh\vvh\cdot\norE\dS = \int_{\E}\qsh\vv\cdot\norE\dS
      \qquad \forall\qsh\in\PS{k-1}(\E),
    \end{align}
    where we recall that $\norE$ is the unit normal vector to the edge
    $\E$, whose orientation is fixed once and for all;

    \medskip
  \item[$(ii)$] for $k\geq2$ and for all $\P\in\Th$, it holds that 
    \begin{align}
      \int_{\P}\vvh\cdot\qvh\dV = \int_{\P}\vv\cdot\qvh\dV
      \qquad \forall\qvh\in\big[\PS{k-2}(\P)\big]^2.
    \end{align}   
  \end{description}
  The vector-valued field $\vvh$ is easily determined in $\Vvhk$ by
  properly setting the degrees of freedom of the formulations $\FO$
  and $\FT$.
  In particular, if $\vvh\cdot\norE=\vshx\norEx+\vshy\norEy$ for
  $\norE=(\norEx,\norEy)^T$ and $\vvh=(\vshx,\vshy)^T$, then it holds
  that
  \begin{itemize}
  \item condition $(i)$ is verified by setting accordingly the degrees
    of freedom \DOFS{F1}{b} of formulation $\FO$ and \DOFS{F2}{b} of
    formulation $\FT$;
  \item condition $(ii)$ is verified by setting accordingly the
    degrees of freedom \DOFS{F1}{c} of formulation $\FO$ and
    \DOFS{F2}{d} of formulation $\FT$.
  \end{itemize}
  All the remaining degrees of freedom are set to zero.
  The unisolvency property ensures that such $\vvh$ exists and is
  unique in $\Vvhk$.
  We denote the correspondance between $\vv$ and $\vvh$ by introducing
  the elemental operator $\PitP:\big[\HONE(\P)\big]^2\to\Vvhk(\P)$,
  which is such that $\PitP\vv=\vvh$, and the global operator
  $\restrict{\big(\Pit\vv\big)}{\P}=\PitP(\restrict{\vv}{\P})$ for all
  $\P\in\Th$.

  \medskip
  In the third and last step, we define the Fortin operator as
  $\PiF\vv = \Pio\vv + \Pit(1-\Pio)\vv$.
  This operator satisfies~\eqref{eq:Fortin:bs=bsh}
  and~\eqref{eq:Fortin:boundedness}.
  The discrete inf-sup condition then follows immediately from the
  Fortin argument by using these relations and the continuous inf-sup
  condition~\eqref{eq:exact:inf-sup}.
\end{proof}

The properties of coercivity and boundedness of $\ash(\cdot,\cdot)$
and inf-sup stability (cf. Lemma~\ref{lemma:inf-sup:condition}) and
boundedness of $\bsh(\cdot,\cdot)$ implies the wellposedness of the
two virtual element formulations considered in this work.
We formally state this result in the next theorem.

\medskip
\begin{theorem}[Well-posedness]
  The virtual element formulations \FO{} and \FT{} for any given
  polynomial degree $k\geq1$ have one and only one solution pair
  $(\uvh,\psh)\in\Vvhk\times\Qshkk$, which is such that
  \begin{align}
    \norm{\uvh}{1,\Omega} + \norm{\psh}{0,\Omega} \leq \Cs\norm{\fv}{0,\Omega}.
  \end{align}
\end{theorem}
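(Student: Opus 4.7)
The plan is to apply the classical saddle-point theory of Brezzi, since every hypothesis needed has already been established in the preceding development. Indeed, coercivity of $\ash(\cdot,\cdot)$ on the whole $\Vvhk$ is \eqref{eq:coercivity}, continuity of $\ash(\cdot,\cdot)$ is \eqref{eq:continuity}, continuity of $\bsh(\cdot,\cdot)$ was stated right before Lemma~\ref{lemma:inf-sup:condition}, and the discrete inf-sup condition is precisely that lemma. The Poincar\'e inequality on $\big[\HONEzr(\Omega)\big]^2$ upgrades the $\HONE$-seminorm to a norm equivalent to $\norm{\cdot}{1,\Omega}$, so coercivity holds in the norm we use in the final bound. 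These four properties, together, are exactly Brezzi's assumptions, so the existence and uniqueness of the pair $(\uvh,\psh)\in\Vvhk\times\Qshkk$ follow at once.

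To obtain the quantitative stability estimate, I would proceed in the standard two-step fashion. First, test \eqref{eq:stokes:vem:A} with $\vvh=\uvh$ and use \eqref{eq:stokes:vem:B} to kill the mixed term $\bsh(\uvh,\psh)$; the coercivity bound \eqref{eq:coercivity} then yields
\begin{equation*}
\alpha_{*}\snorm{\uvh}{1,\Omega}^{2}\le \ash(\uvh,\uvh)=\bil{\fvh}{\uvh}.
\end{equation*}
The right-hand side is controlled by combining the definition \eqref{eq:fvh:def} with the $\LTWO$-stability of $\PizP{\kb}$ and a Cauchy--Schwarz argument, which gives $\bil{\fvh}{\uvh}\le\Cs\norm{\fv}{0,\Omega}\snorm{\uvh}{1,\Omega}$; dividing through and applying Poincar\'e delivers the velocity part of the bound, $\norm{\uvh}{1,\Omega}\le\Cs\norm{\fv}{0,\Omega}$.

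Second, to control the pressure I would isolate $\bsh(\vvh,\psh)$ from \eqref{eq:stokes:vem:A}, writing
\begin{equation*}
\bsh(\vvh,\psh)=\bil{\fvh}{\vvh}-\ash(\uvh,\vvh)
\end{equation*}
for every $\vvh\in\Vvhk$. The continuity of $\ash(\cdot,\cdot)$ and the right-hand-side bound just used give $|\bsh(\vvh,\psh)|\le\Cs\bigl(\norm{\fv}{0,\Omega}+\snorm{\uvh}{1,\Omega}\bigr)\snorm{\vvh}{1,\Omega}$. Dividing by $\snorm{\vvh}{1,\Omega}$, taking the supremum over $\vvh\in\Vvhk\setminus\{0\}$, and invoking the inf-sup condition of Lemma~\ref{lemma:inf-sup:condition} bounds $\norm{\psh}{0,\Omega}$ by the same constant times $\norm{\fv}{0,\Omega}$ (after absorbing the already-controlled velocity term). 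Summing the two bounds completes the claim.

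The only step that requires any genuine care is the right-hand-side estimate: one must observe that $\PizP{\kb}$ is an $\LTWO$-orthogonal projector, hence non-expansive in the $\LTWO$-norm, so that $\bil{\fvh}{\vvh}=\sum_{\P}\bigScalP{\PizP{\kb}\fv}{\vvh}\le\norm{\fv}{0,\Omega}\norm{\vvh}{0,\Omega}$, and then the Poincar\'e inequality converts $\norm{\vvh}{0,\Omega}$ into $\snorm{\vvh}{1,\Omega}$ with a constant depending only on $\Omega$. Everything else is a bookkeeping exercise within the Brezzi template.
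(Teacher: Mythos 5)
Your proof is correct and is precisely the standard Brezzi saddle-point argument that the paper itself invokes: the authors omit the proof entirely, remarking that it is a standard result and citing Boffi--Brezzi--Fortin, and all the ingredients you assemble (coercivity \eqref{eq:coercivity}, continuity \eqref{eq:continuity}, boundedness of $\bsh$, and the inf-sup condition of Lemma~\ref{lemma:inf-sup:condition}) are exactly the ones the paper established for that purpose. Your filled-in stability estimate, including the non-expansiveness of $\PizP{\kb}$ and the Poincar\'e step to pass from the seminorm to the full $\HONE$ norm, is sound.
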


\medskip
The proof is omitted as this is a standard result in the numerical
approximation of saddle-point problems,
cf.~\cite{Boffi-Brezzi-Fortin:2013}.

\subsection{Preliminary results}
\label{subsec:preliminary}

To derive the error estimates in the energy norm and the $\LTWO$ norm,
we need three technical lemmas that are preliminarly reported here.
The first two lemmas are reported without the proof as they are
well-known results from the approximation theory, see
\cite{Brenner-Scott:1994,Dupont-Scott:1980}.
In particular, the first lemma provides an estimate of the projection
error and is the vector version of the analogous result reported
in~\cite{BeiraodaVeiga-Brezzi-Cangiani-Manzini-Marini-Russo:2013} for
the scalar case.

\medskip
\begin{lemma}[Projection error]
  \label{lemma:projection:error}
  Under Assumptions~\textbf{(M1)}-\textbf{(M2)}, for every
  vector-valued field $\vv\in\big[\HS{s+1}(\P)\big]^2$
  with $1\leq\ss\leq\ell$ for some given integer number $\ell$, there
  exists a vector polynomial $\vv_{\pi}\in\big[\PS{\ell}(\P)\big]^2$
  such that
  \begin{align}
    &\norm{\vv-\vv_{\pi}}{0,\P} + \hP\snorm{\vv-\vv_{\pi}}{1,\P}\leq\Cs\hP^{s+1}\snorm{\vv}{s+1,\P},
  \end{align}
  where $\Cs$ is some positive constant that is independent of $\hP$
  but may depend on the polynomial degree $\ell$ and the mesh
  regularity constant $\varrho$.
\end{lemma}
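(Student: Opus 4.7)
The plan is to reduce the statement to a componentwise scalar estimate and then invoke a classical Bramble--Hilbert-type result on star-shaped domains. Since both $\norm{\cdot}{0,\P}$ and $\snorm{\cdot}{1,\P}$ split across the two Cartesian components of a vector-valued field, it will be enough to construct, for each scalar component $\vs$ of $\vv=(\vs_x,\vs_y)^T$, a polynomial $\qs_{\pi}\in\PS{\ell}(\P)$ satisfying the scalar analogue of the claimed bound with $\vs$ in place of $\vv$, and then assemble the two scalar approximants into $\vv_{\pi}\in\big[\PS{\ell}(\P)\big]^2$.

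For the scalar estimate I would take $\qs_{\pi}$ to be the averaged Taylor polynomial of degree $\ell$ of $\vs$ in the sense of Dupont--Scott, cf.~\cite{Dupont-Scott:1980,Brenner-Scott:1994}. Assumption \textbf{(M1)} guarantees that $\P$ is star-shaped with respect to a ball of radius at least $\varrho\hP$, which is exactly the geometric hypothesis required by the quantitative form of the Bramble--Hilbert lemma on $\P$. A standard scaling argument, combined with the fact that the chunkiness parameter of $\P$ is bounded in terms of $\varrho^{-1}$, then yields, for every integer $m$ with $0\le m\le s+1$, the bound
\begin{align*}
\snorm{\vs-\qs_{\pi}}{m,\P}\le\Cs\,\hP^{s+1-m}\snorm{\vs}{s+1,\P},
\end{align*}
where $\Cs$ depends only on $\ell$, $m$ and $\varrho$. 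Specializing this to $m=0$ and $m=1$, and summing the two resulting inequalities after multiplying the second one by $\hP$, gives the asserted combined estimate on each scalar component; collecting the two components then delivers the vector inequality with a (possibly different) constant $\Cs$ that still depends only on $\ell$ and $\varrho$.

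The only delicate point is to ensure that $\Cs$ is genuinely independent of the specific element $\P\in\Th$; this is precisely what the uniform star-shapedness built into \textbf{(M1)} is designed to buy, and it is handled once and for all by the Dupont--Scott theory. Assumption \textbf{(M2)} plays no role in this particular bound and would only enter later when one wishes to control edge or trace-type quantities with the same projection. Since no additional ingredients are needed, the argument is essentially a direct quotation of a classical result, which is why the paper chooses to report the lemma without proof.
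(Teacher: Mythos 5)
Your proposal is correct and follows exactly the route the paper relies on: the authors state this lemma without proof, citing \cite{Brenner-Scott:1994,Dupont-Scott:1980} and describing it as the componentwise vector version of the scalar result in \cite{BeiraodaVeiga-Brezzi-Cangiani-Manzini-Marini-Russo:2013}, which is precisely the averaged-Taylor-polynomial/Bramble--Hilbert argument on star-shaped domains that you outline. Your observations that \textbf{(M1)} supplies the uniform chunkiness bound and that \textbf{(M2)} is not needed for this particular estimate are both accurate.
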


\medskip
\noindent
The second lemma reports an estimate of the approximation errors for
the interpolants $\vvI$ and $\qsI$.
According
to~\cite{BeiraodaVeiga-Brezzi-Cangiani-Manzini-Marini-Russo:2013}, we
define the local interpolation $\vvI\in\Vvhk(\P)$ of a (smooth enough)
field $\vv$ as the virtual element field that has the same degrees of
freedom.
Similarly, we define the local interpolation $\qsI\in\Qshkk$ of a
(smooth enough) scalar function $\qs$ as the polynomial function that
has the same degrees of freedom.
Therefore, $\restrict{(\qsI)}{\P}\in\PS{k-1}(\P)$ for all elements
$\P\in\Th$, and
\begin{align}
  \int_{\Omega}\qsI(\xv)\dV = 0,
  \label{eq:interp:zero-average}
\end{align}
since according to~\eqref{eq:SV:scalar:space:def} it also holds that
$\qsI\in\LTWOzr(\Omega)$.

\medskip
\begin{lemma}[Interpolation error]
  \label{lemma:interpolation:error}
  Under Assumptions~\textbf{(M1)}-\textbf{(M2)}, for every
  vector-valued field $\vv\in\big[\HS{s+1}(\P)\big]^2$ and scalar
  function $\qs\in\HS{s}(\P)$ with $1\leq\ss\leq\ell$, for some given
  integer number $\ell$, there exist a vector-valued field
  $\vvI\in\Vvh{\ell}(\P)$ and a scalar field
  $\qsI\in\PS{\ell-1}(\P)$ such that
  \begin{align}
    \norm{\vv-\vvI}{0,\P} + \hP\snorm{\vv-\vvI}{1,\P}\leq\Cs\hP^{s+1}\snorm{\vv}{s+1,\P},\\[0.5em]
    \norm{\qs-\qsI}{0,\P} + \hP\snorm{\qs-\qsI}{1,\P}\leq\Cs\hP^{s} \snorm{\qs}{s,\P},
  \end{align}
  for some positive constant $\Cs$ that is independent of $\hP$ but
  may depend on the polynomial degree $\ell$ and the mesh regularity
  constant $\varrho$.
\end{lemma}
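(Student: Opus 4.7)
The proof is standard in the virtual element literature, but since there are two distinct formulations ($\FO$ and $\FT$) and both regular and enhanced variants, the plan must address all four cases in a uniform way. The plan is to establish the vector-valued estimate by combining the polynomial projection estimate from Lemma~\ref{lemma:projection:error} with an $\LTWO$- and $\HONE$-stability result for the interpolation operator $\vv\mapsto\vvI$; the scalar estimate will be immediate by recognizing $\qsI$ as the $\LTWO$-projection onto $\PS{\ell-1}(\P)$.

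For the scalar estimate, I would first observe that the degrees of freedom of $\Qshkk$ restricted to $\P$ are precisely the moments against $\PS{k-1}(\P)$ (with the single global zero-average constraint~\eqref{eq:interp:zero-average}). Hence, at the element level, $\qsI=\PizP{\ell-1}\qs$ is the $\LTWO$-orthogonal projection. The bound then follows directly from classical Bramble--Hilbert/Dupont--Scott polynomial approximation theory on star-shaped domains satisfying \textbf{(M1)}, yielding $\norm{\qs-\qsI}{0,\P}+\hP\snorm{\qs-\qsI}{1,\P}\leq\Cs\hP^{s}\snorm{\qs}{s,\P}$; the global zero-average is inherited from $\qs\in\LTWOzr(\Omega)$.

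For the vector estimate, I would use the by-now-classical three-step argument. First, invoke Lemma~\ref{lemma:projection:error} to obtain $\vv_{\pi}\in[\PS{\ell}(\P)]^2$ satisfying the stated bound. Second, since $[\PS{\ell}(\P)]^2\subset\Vvh{\ell}(\P)$ (true for both $\Vvhkp(\P)$ and $\Vvhkpp(\P)$, and for both the regular and enhanced definitions), the interpolation operator reproduces polynomials, so $(\vv_{\pi})_I=\vv_{\pi}$. Third, by the triangle inequality,
\begin{align*}
\norm{\vv-\vvI}{0,\P}+\hP\snorm{\vv-\vvI}{1,\P}
&\leq \norm{\vv-\vv_{\pi}}{0,\P}+\hP\snorm{\vv-\vv_{\pi}}{1,\P} \\
&\quad + \norm{(\vv-\vv_{\pi})_I}{0,\P}+\hP\snorm{(\vv-\vv_{\pi})_I}{1,\P}.
\end{align*}
The first two terms are controlled by Lemma~\ref{lemma:projection:error}, so it remains to bound the interpolation of $\vv-\vv_{\pi}$.

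The main obstacle, and therefore the crux of the argument, is establishing the stability bound
\[
\norm{\wvI}{0,\P}+\hP\snorm{\wvI}{1,\P}\leq \Cs\bigl(\norm{\wv}{0,\P}+\hP\snorm{\wv}{1,\P}+\hP^{s+1}\snorm{\wv}{s+1,\P}\bigr)
\]
for $\wv=\vv-\vv_{\pi}$. The plan is to expand $\wvI$ in the canonical virtual element basis, estimate each degree of freedom by standard scaled trace and inverse inequalities on the polygon (vertex values via a trace inequality from $\HS{s+1}$, edge moments of $\wv\cdot\norE$ or $\wv\cdot\tngE$ via an edge trace inequality, and cell moments via Cauchy--Schwarz), and then exploit the scaling $\hE\geq\varrho\hP$ from \textbf{(M2)} together with star-shapedness \textbf{(M1)} to bound the $\LTWO$- and $\HONE$-norms of $\wvI$ by an appropriate combination of these degree-of-freedom values. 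The technicalities differ slightly between formulation $\FO$ (all components polynomial on edges) and $\FT$ (split normal/tangential traces), but in both cases the scaling argument is identical in form; a reference to the standard construction in~\cite{BeiraodaVeiga-Brezzi-Cangiani-Manzini-Marini-Russo:2013} (non-enhanced) and~\cite{Cangiani-Georgoulis-Pryer-Sutton:2016} (enhanced) for the Poisson-type virtual element spaces, as already used in the Fortin construction of Lemma~\ref{lemma:inf-sup:condition}, closes the argument.
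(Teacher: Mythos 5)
The paper does not actually prove this lemma: it states it ``without the proof'' as a well-known result of polynomial approximation theory, citing \cite{Brenner-Scott:1994,Dupont-Scott:1980} (and, implicitly, the VEM interpolation results of \cite{Mora-Rivera-Rodriguez:2015,Cangiani-Georgoulis-Pryer-Sutton:2016} invoked later in the Fortin construction). So there is no in-paper argument to compare against; what you have written is a proof the authors chose to omit. Your route is the standard one and is sound: the scalar bound is immediate once you observe that, element by element, $\qsI$ coincides with the $\LTWO$-orthogonal projection $\PizP{\ell-1}\qs$ (and the global zero average is preserved because the projection conserves the elemental mean), while the vector bound follows from Lemma~\ref{lemma:projection:error}, polynomial reproduction of the interpolation operator (which does require checking $[\PS{\ell}(\P)]^2\subset\Vvh{\ell}(\P)$ for all four space variants, as you do), and a triangle inequality. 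The only genuinely delicate point is the one you correctly isolate as the crux: the stability of $\wv\mapsto\wvI$ in the scaled $\LTWO$/$\HONE$ norms. Two sub-issues deserve explicit mention there. First, the vertex-value degrees of freedom are not controlled by $\norm{\wv}{0,\P}+\hP\snorm{\wv}{1,\P}$ alone in two dimensions, which is precisely why your stability estimate must carry the extra $\hP^{s+1}\snorm{\wv}{s+1,\P}$ term (available here since $s\geq1$). Second, bounding $\snorm{\wvI}{1,\P}$ in terms of the degrees of freedom is itself nontrivial for virtual functions, which are only known implicitly; the usual argument uses the fact that $\wvI$ solves a Poisson problem with polynomial volume data and polynomial boundary trace, together with a lifting/energy argument on the star-shaped element. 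Deferring these two steps to the cited references is legitimate, but they are where the actual work lives, so a fully self-contained write-up would need to spell them out.
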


\medskip
\noindent
In the last lemma of this section we prove a relation between $\uvh$,
$\uvI$, $\psh$, and $\psI$ that will be used in the convergence
analysis of the next sections.

\medskip
\begin{lemma}
  Let $(\uv,\ps)\in\big[\HS{s+1}(\Omega)\big]^2\times\LTWOzr(\Omega)$,
  $s\geq1$, be the exact solution of the variational formulation of
  the Stokes problem given
  in~\eqref{eq:stokes:var:A}-\eqref{eq:stokes:var:B} and
  $(\uvI,\psI)\in\Vvhk\times\Qshkk$ the corresponding virtual element
  interpolation.
  Let $(\uvh,\psh)\in\Vvhk\times\Qshkk$ be the virtual element
  approximation to $(\uv,\ps)$
  solving~\eqref{eq:stokes:vem:A}-\eqref{eq:stokes:vem:B}.
  Then, it holds that
  \begin{align}
    \bs(\uvh-\uvI,\psh-\psI) = 0.
    \label{eq:aux:20}
  \end{align}
\end{lemma}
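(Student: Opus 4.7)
The plan is to split the bilinear form as
\[
\bs(\uvh-\uvI,\psh-\psI) = \bs(\uvh,\psh-\psI) - \bs(\uvI,\psh-\psI),
\]
and then show each term vanishes separately. Note first that $\psh-\psI\in\Qshkk\subset\PS{k-1}(\Th)$, which puts the second argument in the range where the key identities apply.

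For the first term, I would invoke the commuting identity~\eqref{eq:bsh=bs} between the discrete and continuous forms, which gives $\bs(\uvh,\psh-\psI)=\bsh(\uvh,\psh-\psI)$. The discrete mass-conservation equation~\eqref{eq:stokes:vem:B}, tested against $\qsh:=\psh-\psI\in\Qshkk$, then yields $\bsh(\uvh,\psh-\psI)=0$. So the first term is immediately zero.

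The second term requires a bit more work and is the main (though still essentially routine) obstacle. The idea is to prove the polynomial consistency
\[
\bsP(\uvI,\qsh) = \bsP(\uv,\qsh)\qquad\forall\qsh\in\PS{k-1}(\P),\ \P\in\Th,
\]
so that summing over elements and using $\DIV\uv=0$ gives $\bs(\uvI,\psh-\psI)=\bs(\uv,\psh-\psI)=0$. To establish this elementwise identity, I would integrate by parts:
\[
-\int_{\P}\qsh\DIV\uvI\dV = \int_{\P}\nabla\qsh\cdot\uvI\dV - \int_{\partial\P}\qsh\,\uvI\cdot\norP\dS.
\]
The volume term involves $\nabla\qsh\in[\PS{k-2}(\P)]^2$, so it is determined by the cell moments of $\uvI$ against vector polynomials of degree $\le k-2$, which are exactly the degrees of freedom \DOFS{F1}{c} of formulation $\FO$ or \DOFS{F2}{d} of formulation $\FT$; by definition of the interpolant these moments coincide with those of $\uv$. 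For the boundary integral, on each edge $\E\subset\partial\P$ we have $\qsh|_\E\in\PS{k-1}(\E)$, so the integral is determined by the moments of $\uvI\cdot\norE$ against polynomials of degree $\le k-1$, which for formulation $\FO$ are encoded jointly by \DOFS{F1}{a}-\DOFS{F1}{b} (since the edge trace of each component is a polynomial of degree $k+1$ reconstructed from those DoFs) and for formulation $\FT$ are exactly the DoFs \DOFS{F2}{b} together with the vertex values \DOFS{F2}{a}. In both cases these data agree with the corresponding ones for $\uv$, so the boundary integral for $\uvI$ coincides with that for $\uv$. Applying the same integration by parts backwards for $\uv$ gives $\bsP(\uvI,\qsh)=\bsP(\uv,\qsh)$.

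Summing the elemental identities over $\P\in\Th$ yields $\bs(\uvI,\qsh)=\bs(\uv,\qsh)$ for every $\qsh\in\PS{k-1}(\Th)$. Taking $\qsh=\psh-\psI$ and using $\bs(\uv,\qsh)=-\int_\Omega\qsh\DIV\uv\dV=0$ because $\DIV\uv=0$ from~\eqref{eq:stokes:B}, I conclude that $\bs(\uvI,\psh-\psI)=0$, and hence~\eqref{eq:aux:20}.
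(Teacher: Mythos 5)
Your proposal is correct and follows essentially the same route as the paper: the first term is killed by the identity $\bsh=\bs$ on $\Vvhk\times\PS{k-1}(\Th)$ together with the discrete divergence equation, and the second by the elementwise integration-by-parts argument showing $\bsP(\uvI,\qsh)=\bsP(\uv,\qsh)$ through the matching of cell and edge moments of the interpolant, followed by $\DIV\uv=0$. The only (cosmetic) difference is that you split the bilinear form at the outset, whereas the paper first proves $\bs(\vvI,\qsh)=\bs(\vv,\qsh)$ for a generic $\vv$ and then specializes; your explicit accounting of which degrees of freedom determine each term is a welcome elaboration of the paper's terser "using the definition of the interpolant''.
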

\begin{proof}
  Let $\P$ be an element of mesh $\Th$ and $k\geq1$ an integer number.
  Consider the function $\vv\in\big[\HS{s+1}(\P)\big]^2$, $s\geq1$,
  and its virtual element interpolant $\vvI\in\Vvhk(\P)$.
  Integrating by parts twice and using the definition of the
  interpolant $\vvI$, we find that:
  \begin{align}
    -\bsP(\vv,\qsh)
    = \int_{\P}\qsh\DIV\vv\dV
    &=
    -\int_{\P}\nabla\qsh\cdot\vv\dV
    +\sum_{\E\in\partial\P}\int_{\E}\qsh\norE\cdot\vv\ds
    \nonumber\\[0.5em]
    &=
    -\int_{\P}\nabla\qsh\cdot\vvI\dV
    +\sum_{\E\in\partial\P}\int_{\E}\qsh\norE\cdot\vvI\ds
    = \int_{\P}\qsh\DIV\vvI\dV
    = -\bsP(\vvI,\qsh),
    \label{eq:bsP=bsP}
  \end{align}
  which holds for all $\qsh\in\PS{k-1}(\P)$.
  The identity chain~\eqref{eq:bsP=bsP} implies that
  $\bsP(\vv,\qsh)=\bsP(\vvI,\qsh)$, and, adding this relation over all
  elements $\P$ yields $\bs(\vv,\qsh)=\bs(\vvI,\qsh)$.
  By taking $\vv=\uv$, equation \eqref{eq:stokes:var:B} implies that
  $\bs(\uvI,\qsh)=\bs(\uv,\qsh)=0$.
  Likewise, by taking $\vvh=\uvh$, equations~\eqref{eq:bsh=bs}
  and~\eqref{eq:stokes:vem:B} imply that
  $\bs(\uvh,\qsh)=\bsh(\uvh,\qsh)=0$.
  Taking the difference of the left-most left-hand side of the two
  previous identities yields $\bs(\uvh-\uvI,\qsh)=0$, which holds for
  all $\qsh\in\Qshkk$.
  The assertion of the lemma readily follows by taking $\qsh=\psh-\psI$.
\end{proof}

\subsection{Error estimate in the energy norm}
\label{subsec:error:estimate:H1}

\begin{theorem}
  \label{theorem:H1:estimate}
  Let $\uv\in\big[\HS{s+1}(\Omega)\cap\HONEzr(\Omega)\big]^2$ and
  $\ps\in\HS{s}(\Omega)\cap\LTWOzr(\Omega)$, $1\leq\ss\leq\ks$,
  be the solution of the variational formulation of the Stokes problem
  given in~\eqref{eq:stokes:var:A}-\eqref{eq:stokes:var:B}.
  Let $(\uvh,\psh)\in\Vvhk\times\Qshkk$ be the solution of the virtual
  element variational formulation
  \eqref{eq:stokes:vem:A}-\eqref{eq:stokes:vem:B} under the mesh
  regularity assumptions $\textbf{(M1)}-\textbf{(M2)}$ and for any
  polynomial degree $k\geq1$.
  Then, there exists a real, strictly positive constant $\Cs$
  independent of $\hh$ such that the following abstract estimate
  holds:
  \begin{align}
    \snorm{\uv-\uvh}{1,\Omega} + \norm{\ps-\psh}{0,\Omega}
    \leq\Cs\Bigg(
    \snorm{\uv-\uvI}{1,\Omega}
    + \snorm{\uv-\uv_{\pi}}{1,\hh}
    + \norm{\ps-\psI}{0,\Omega}
    + \sup_{\vvh\in\Vvhk\setminus\{\mathbf{0}\}}\frac{\ABS{\bil{\fvh}{\vvh}-\scal{\fv}{\vvh}}}{\snorm{\vvh}{1,\Omega}}
    \Bigg)
    \label{eq:theo:abstract}
  \end{align}
  where $\uvI\in\Vvhk$ and $\psI\in\Qshkk$ are the interpolants of
  $\uv$ and $\ps$ from Lemma~\ref{lemma:interpolation:error}, and
  $\uv_{\pi}\in\big[\PS{k}(\Th)\big]^2$ is any polynomial
  approximation of $\uv$ that is defined in accordance with
  Lemma~\ref{lemma:projection:error}.
  Moreover, if
  $\fv\in\big[\HS{t}(\Omega)\big]^2$, $t\geq0$, it holds that
  \begin{align}
    \snorm{\uv-\uvh}{1,\Omega} + \norm{\ps-\psh}{0,\Omega}
    \leq \Cs\Big(
    \hh^{s}\big(\norm{\uv}{s+1,\Omega} + \norm{\ps}{s,\Omega}\big) + \hh^{\min(t,\kb)+1}\norm{\fv}{t,\Omega} 
    \Big),
    \label{eq:the:H1:estimates}
  \end{align}
  where $\kb$ is defined as in~\eqref{eq:fvh:def}.
\end{theorem}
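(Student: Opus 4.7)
The plan is to follow the classical saddle-point error analysis, adapted to the virtual element framework where neither $\ash$ nor $\bil{\fvh}{\cdot}$ agree exactly with their continuous counterparts. The main technical ingredients will be the polynomial consistency~\eqref{eq:consistency} of $\ash$, the identity $\bsh(\vvh,\qsh)=\bs(\vvh,\qsh)$ on $\Vvhk\times\PS{k-1}(\Th)$ from~\eqref{eq:bsh=bs}, the orthogonality relation $\bs(\uvh-\uvI,\psh-\psI)=0$ proved in~\eqref{eq:aux:20}, and the interpolation/projection estimates of Lemmas~\ref{lemma:projection:error}--\ref{lemma:interpolation:error}.

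\textbf{Velocity estimate.} First I would split the error by the interpolant, writing $\uv-\uvh=(\uv-\uvI)+(\uvI-\uvh)$, and set $\dvh:=\uvh-\uvI\in\Vvhk$, $\ssh:=\psh-\psI\in\Qshkk$. By coercivity~\eqref{eq:coercivity}, it suffices to bound $\ash(\dvh,\dvh)$. Using the discrete equation~\eqref{eq:stokes:vem:A} for $\uvh$, introducing $\uv_\pi\in[\PS{k}(\Th)]^2$, applying element-wise polynomial consistency $\asPh(\uv_\pi,\dvh)=\asP(\uv_\pi,\dvh)$, and then using the exact equation~\eqref{eq:stokes:var:A} to replace $\as(\uv,\dvh)$ by $\scal{\fv}{\dvh}-\bs(\dvh,\ps)$, I would reach a decomposition of the form
\begin{align*}
\ash(\dvh,\dvh)
&= \bigl(\bil{\fvh}{\dvh}-\scal{\fv}{\dvh}\bigr)
+ \bs(\dvh,\ps-\psI)
- \bsh(\dvh,\ssh)
- \ash(\uvI-\uv_\pi,\dvh)
- \as(\uv_\pi-\uv,\dvh).
\end{align*}
The term $\bsh(\dvh,\ssh)$ equals $\bs(\dvh,\ssh)$ by~\eqref{eq:bsh=bs} and vanishes by the orthogonality relation~\eqref{eq:aux:20}. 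The remaining four terms are each estimated by Cauchy--Schwarz together with the continuity of $\ash$ (cf.~\eqref{eq:ash:global:continuity}) and of $\bs$. Dividing by $\snorm{\dvh}{1,\Omega}$ and invoking the triangle inequality yields the velocity contribution to~\eqref{eq:theo:abstract}.

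\textbf{Pressure estimate.} For $\ssh=\psh-\psI$ I would invoke the discrete inf-sup condition of Lemma~\ref{lemma:inf-sup:condition}, so that there exists $\vvh\in\Vvhk$ with $\snorm{\vvh}{1,\Omega}=1$ and $\bsh(\vvh,\ssh)\geq\beta\norm{\ssh}{0,\Omega}$. Writing $\bsh(\vvh,\ssh)=\bsh(\vvh,\psh)-\bsh(\vvh,\psI)$ and using the discrete momentum equation~\eqref{eq:stokes:vem:A} to replace $\bsh(\vvh,\psh)$, then adding and subtracting $\scal{\fv}{\vvh}=\as(\uv,\vvh)+\bs(\vvh,\ps)$ and introducing $\uv_\pi$ through polynomial consistency, I obtain
\begin{align*}
\bsh(\vvh,\ssh)
&= \bigl(\bil{\fvh}{\vvh}-\scal{\fv}{\vvh}\bigr)
+ \as(\uv-\uv_\pi,\vvh)
+ \ash(\uv_\pi-\uvI,\vvh)
- \ash(\dvh,\vvh)
+ \bs(\vvh,\ps-\psI).
\end{align*}
Bounding each term using continuity, Cauchy--Schwarz, and the velocity estimate already obtained to control $\snorm{\dvh}{1,\Omega}$, then dividing by $\beta$, produces $\norm{\ssh}{0,\Omega}$ bounded by the same right-hand side quantities, and a triangle inequality against $\norm{\ps-\psI}{0,\Omega}$ completes the abstract estimate~\eqref{eq:theo:abstract}.

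\textbf{Concrete rate.} To deduce~\eqref{eq:the:H1:estimates}, I would apply Lemma~\ref{lemma:interpolation:error} to bound $\snorm{\uv-\uvI}{1,\Omega}$ and $\norm{\ps-\psI}{0,\Omega}$, Lemma~\ref{lemma:projection:error} to bound $\snorm{\uv-\uv_\pi}{1,\hh}$, each yielding an $\hh^s$ factor, and use estimates~\eqref{eq:fv:bound:0}--\eqref{eq:fv:bound:1} to bound the forcing consistency term by $\hh^{\min(t,\kb)+1}\norm{\fv}{t,\Omega}$. Summing over elements via Cauchy--Schwarz as in~\eqref{eq:ash:global:continuity} yields the claimed global rate.

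\textbf{Anticipated obstacle.} The most delicate point is the bookkeeping in the velocity step: one must simultaneously combine the VEM equation, the exact equation, the polynomial consistency of $\ash$ (which transfers $\uv_\pi$ from $\ash$ to $\as$), and the identity $\bsh=\bs$ on $\PS{k-1}(\Th)$, so that exactly the orthogonality from~\eqref{eq:aux:20} is triggered and the pressure term $\bsh(\dvh,\ssh)$ disappears. A careful sign-tracking together with the insertion of $\pm\bsh(\dvh,\psI)$ and $\pm\scal{\fv}{\dvh}$ is the crux; once that cancellation is secured, the remaining estimates reduce to standard continuity bounds and direct application of the approximation lemmas.
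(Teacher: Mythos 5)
Your proposal is correct and follows essentially the same route as the paper's proof: the same interpolant splitting, the same coercivity-plus-consistency chain for the velocity with the orthogonality $\bs(\uvh-\uvI,\psh-\psI)=0$ killing the pressure coupling, the same inf-sup argument for $\psh-\psI$, and the same application of Lemmas~\ref{lemma:projection:error}--\ref{lemma:interpolation:error} and the bounds~\eqref{eq:fv:bound:0}--\eqref{eq:fv:bound:1} for the concrete rate. The decompositions you write down are algebraically identical to the paper's terms $\TERM{R}{1}$--$\TERM{R}{6}$ up to trivial regrouping.
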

\begin{proof}
  We add and subtract $\uvI$ and $\psI$ in the two terms of the
  left-hand side of~\eqref{eq:theo:abstract} and use the triangle
  inequality:
  \begin{align}
    \snorm{\uv-\uvh}{1,\Omega} \leq \snorm{\uv-\uvI}{1,\Omega} + \snorm{\uvI-\uvh}{1,\Omega}, \label{eq:H1:proof:00} \\[0.2em]
    \norm {\ps-\psh}{0,\Omega} \leq \norm {\ps-\psI}{0,\Omega} + \snorm{\psI-\psh}{1,\Omega}. \label{eq:H1:proof:10}
  \end{align}
  The two terms $\snorm{\uv-\uvh}{1,\Omega}$ and
  $\norm{\ps-\psh}{0,\Omega}$ are in the right-hand side
  of~\eqref{eq:theo:abstract}.
  We can estimate them by applying
  Lemma~\ref{lemma:interpolation:error} to
  obtain~\eqref{eq:the:H1:estimates}.
  Instead, to estimate the second term of the right-hand side
  of~\eqref{eq:H1:proof:00} and~\eqref{eq:H1:proof:10}, we proceed as
  follows.
  Let $\dvh=\uvh-\uvI\in\Vvhk$.
  Starting from the coercivity inequality~\eqref{eq:coercivity}, we
  find that:
  \begin{align}
    \begin{array}{lll}
      &\alpha_*\snorm{\dvh}{1,\Omega}^2
      \leq \ash(\dvh,\dvh)
      &\hspace{-3cm}\mbox{\big[split $\dvh=\uvh-\uvI$\big]}\nonumber\\[0.4em]
      &\qquad= \ash(\uvh,\dvh) - \ash(\uvI,\dvh)
      &\hspace{-3cm}\mbox{\big[use~\eqref{eq:stokes:vem:A} and add $\pm\uv_{\pi}$\big]}\nonumber\\[0.2em]
      &\qquad= \bil{\fvh}{\dvh} - \bsh(\dvh,\psh) - \sum_{\P\in\Th}\Big(\asPh(\uvI-\uv_{\pi},\dvh) + \asPh(\uv_{\pi},\dvh) \Big)
      &\hspace{-3cm}\mbox{\big[use~\eqref{eq:bsh=bs} and~\eqref{eq:consistency}\big]}\nonumber\\[0.2em]
      &\qquad= \bil{\fvh}{\dvh} - \bs(\dvh,\psh) - \sum_{\P\in\Th}\Big(\asPh(\uvI-\uv_{\pi},\dvh) + \asP (\uv_{\pi},\dvh) \Big)
      &\hspace{-3cm}\mbox{\big[use~\eqref{eq:aux:20} and add $\pm\uv$\big]}\nonumber\\[0.2em]
      &\qquad= \bil{\fvh}{\dvh} - \bs(\dvh,\psI) - \sum_{\P\in\Th}\asPh(\uvI-\uv_{\pi},\dvh) - \sum_{\P\in\Th}\Big( \asP (\uv_{\pi}-\uv,\dvh) + \asP(\uv,\dvh) \Big)
      &\hspace{-1.5cm}\mbox{\big[use~\eqref{eq:asP:def}\big]}\nonumber\\[0.2em]
      &\qquad= \bil{\fvh}{\dvh} - \bs(\dvh,\psI) - \sum_{\P\in\Th}\asPh(\uvI-\uv_{\pi},\dvh) - \sum_{\P\in\Th}\asP (\uv_{\pi}-\uv,\dvh) - \as(\uv,\dvh)
      &\hspace{-1.5cm}\mbox{\big[use~\eqref{eq:stokes:var:A}\big]}\nonumber\\[0.2em]
      &\qquad= \bil{\fvh}{\dvh} - \bs(\dvh,\psI) - \sum_{\P\in\Th}\asPh(\uvI-\uv_{\pi},\dvh) - \sum_{\P\in\Th}\asP (\uv_{\pi}-\uv,\dvh) - \Big( \scal{\fv}{\dvh} - \bs(\dvh,\ps) \Big)
      &
      \nonumber\\[0.5em]
      &\qquad
      = \Big[ \bil{\fvh}{\dvh} - \scal{\fv}{\dvh} \Big]
      + \Big[ \bs(\dvh,\ps)   - \bs(\dvh,\psI)   \Big]
      + \bigg[ - \sum_{\P\in\Th}\asPh(\uvI-\uv_{\pi},\dvh) - \sum_{\P\in\Th}\asP (\uv_{\pi}-\uv,\dvh) \bigg]
      \nonumber\\[0.2em]
      &\qquad
      = \big[\TERM{R}{1}\big] + \big[\TERM{R}{2}\big] + \big[\TERM{R}{3}\big].
    \end{array}
  \end{align}
  We derive an upper bound of term $\TERM{R}{1}$ as follows:
  \begin{align*}
    \ABS{\TERM{R}{1}}
    = \ABS{\bil{\fvh}{\dvh} - \scal{\fv}{\dvh}}
    \leq \left[\sup_{\vvh\in\Vvhk\setminus\{\mathbf{0}\}}\frac{ \ABS{\bil{\fvh}{\vvh} - \scal{\fv}{\vvh}} }{ \snorm{\vvh}{1,\Omega} }\right]\,\snorm{\dvh}{1,\Omega}.
  \end{align*}
  We derive an upper bound of term $\TERM{R}{2}$ by
  using
  the Cauchy-Schwarz inequality:
  \begin{align*}
    \ABS{\TERM{R}{2}}
    = \ABS{\bs(\dvh,\ps-\psI)}
    \leq \norm{\DIV\dvh}{0,\Omega}\,\norm{\ps-\psI}{0,\Omega}
    \leq \Cs\snorm{\dvh}{1,\Omega}\,\norm{\ps-\psI}{0,\Omega}.
  \end{align*}
  To derive an upper bound of term $\TERM{R}{3}$, we use the
  continuity of $\ash(\cdot,\cdot)$,
  cf.~\eqref{eq:continuity}, and $\as(\cdot,\cdot)$, we add
  and subtract $\uv$ in the first summation argument, and, in the last
  step, we use definition~\eqref{eq:broken:seminorm} of the broken
  seminorm $\snorm{\cdot}{1,\hh}$ to find that
  \begin{align*}
    &\ABS{\TERM{R}{3}}
    = \ABS{ \sum_{\P\in\Th}\Big( \asPh(\uvI-\uv_{\pi},\dvh) + \asP (\uv_{\pi}-\uv,\dvh) \Big) }
    \leq \sum_{\P\in\Th}\Big(
    \alpha^*\snorm{\uvI-\uv_{\pi}}{1,\P} +
    \snorm{\uv_{\pi}-\uv}{1,\P}
    \Big)\,\snorm{\dvh}{1,\P}
    \nonumber\\[0.5em]
    &\quad\leq \sum_{\P\in\Th}\Big(
    \alpha^*\snorm{\uvI-\uv}{1,\P} +
    (1+\alpha^*)\snorm{\uv-\uv_{\pi}}{1,\P}
    \Big)\,\snorm{\dvh}{1,\P}
    \leq\Big(\,
    \alpha^*\snorm{\uv-\uv_{I}}{1,\Omega} + (1+\alpha^*)\snorm{\uv-\uv_{\pi}}{1,\hh}
    \,\Big)
    \,\snorm{\dvh}{1,\Omega}.
  \end{align*}
  Let $\ssh=\psh-\psI\in\Qshkk$.
  In view of the discrete inf-sup condition,
  cf. Lemma~\ref{lemma:inf-sup:condition}, there exists a real,
  strictly positive constant $\tilde{\beta}$ and a virtual element
  vector-valued field $\vvh$ such that
  \begin{align}
    \begin{array}{lll}
      &\tilde{\beta}\norm{\ssh}{0,\Omega}\snorm{\vvh}{1,\Omega}
      \leq \bsh(\vvh,\ssh)
      &\hspace{-1.75cm}\mbox{\big[split $\ssh=\psh-\psI$\big]}\nonumber\\[0.5em]
      &\qquad= \bsh(\vvh,\psh) - \bsh(\vvh,\psI)
      &\hspace{-1.75cm}\mbox{\big[use~\eqref{eq:stokes:vem:A}\big]}\nonumber\\[0.5em]
      &\qquad= -\ash(\uvh,\vvh) + \bil{\fvh}{\vvh} - \bsh(\vvh,\psI)
      &\hspace{-1.75cm}\mbox{\big[add~\eqref{eq:stokes:var:A}\big]}\nonumber\\[0.5em]
      &\qquad= -\ash(\uvh,\vvh) + \big[ \as(\uv,\vvh) + \bs(\vvh,\ps) - \scal{\fv}{\vvh} \big] + \bil{\fvh}{\vvh} - \bsh(\vvh,\psI)
      &\hspace{-1.75cm}\mbox{\big[use~\eqref{eq:asP:def} and~\eqref{eq:ash:def}\big]}\nonumber\\[0.5em]
      &\qquad=
      \bil{\fvh}{\vvh} - \scal{\fv}{\vvh}
      + \bs(\vvh,\ps) - \bsh(\vvh,\psI)
      + \sum_{\P\in\Th}\Big( \asP(\uv,\vvh) - \asPh(\uvh,\vvh) \Big)
      &\hspace{-1.75cm}\mbox{\big[use~\eqref{eq:consistency} with $\qvh=\uv_{\pi}$\big]}\nonumber\\[1.em]
      &\qquad=
      \Big[ \bil{\fvh}{\vvh} - \scal{\fv}{\vvh} \Big]
      + \Big[ \bs(\vvh,\ps) - \bsh(\vvh,\psI)      \Big]
      + \sum_{\P\in\Th}\Big( \asP(\uv-\uv_{\pi},\vvh) - \asPh(\uvh-\uv_{\pi},\vvh) \Big)
      \nonumber\\
      &
      \qquad= \big[\TERM{R}{4}\big] + \big[\TERM{R}{5}\big] + \big[\TERM{R}{6}\big].
    \end{array}
  \end{align}
  We derive an upper bound of term $\TERM{R}{4}$ using the same steps
  as for the bound of term $\TERM{R}{1}$ with $\vvh$ instead of
  $\dvh$:
  \begin{align*}
    \ABS{\TERM{R}{4}}
    = \ABS{\bil{\fvh}{\vvh} - \scal{\fv}{\vvh}}
    \leq \left[\sup_{\vvh\in\Vvhk\setminus\{\mathbf{0}\}}\frac{ \ABS{\bil{\fvh}{\vvh} - \scal{\fv}{\vvh}} }{ \snorm{\vvh}{1,\Omega} }\right]\,\snorm{\vvh}{1,\Omega}.
  \end{align*}
  We derive an upper bound of term $\TERM{R}{5}$ using the same steps
  as for the bound of term $\TERM{R}{2}$ with $\vvh$ instead of
  $\dvh$:
  \begin{align*}
    \ABS{\TERM{R}{5}}
    \leq \snorm{\vvh}{1,\Omega}\,\norm{\psI-\ps}{0,\Omega}.
  \end{align*}
  We derive an upper bound of term $\TERM{R}{6}$ using the same steps
  as for the bound of term $\TERM{R}{3}$ with $\vvh$ instead of
  $\dvh$ and $\uvh$ instead of $\uvI$
  \begin{align*}
    \ABS{\TERM{R}{6}}
    &\leq
    \Big(
    \alpha^*\snorm{\uv-\uvh}{1,\Omega} +
    (1+\alpha^*)\snorm{\uv-\uv_{\pi}}{1,\hh}
    \Big)
    \,\snorm{\vvh}{1,\Omega}.
  \end{align*}

  \medskip
  Finally, we use the bound of terms $\TERM{R}{1}-\TERM{R}{3}$ to
  control $\snorm{\uvI-\uvh}{1,\Omega}$
  in~\eqref{eq:H1:proof:00}.
  Then, we use the bound of terms $\TERM{R}{4}-\TERM{R}{4}$ and
  $\snorm{\uv-\uvh}{1,\Omega}$ to control
  $\snorm{\psI-\psh}{1,\Omega}$
  in \eqref{eq:H1:proof:10}.
  The first assertion of the theorem follows on using the resulting
  inequalities to control the left-hand side
  of~\eqref{eq:theo:abstract}.
  The estimate~\eqref{eq:the:H1:estimates} follows from a
  straightforward application of Lemmas~\ref{lemma:projection:error}
  and~\ref{lemma:interpolation:error}, and
  estimates~\eqref{eq:fv:bound:0}-\eqref{eq:fv:bound:1} in the
  right-hand side of~\eqref{eq:theo:abstract}.
\end{proof}

\subsection{Error estimate in the $\LTWO$ norm for the velocity field}
\label{subsec:error:estimate:L2}

\begin{theorem}
  \label{theorem:L2:estimate}
  Let $\uv\in\big[\HS{s+1}(\Omega)\cap\HONEzr(\Omega)\big]^2$ and
  $\ps\in\HS{s}(\Omega)\cap\LTWOzr(\Omega)$, $1\leq\ss\leq\ks$,
  be the exact solution of the variational formulation of the Stokes
  problem given in~\eqref{eq:stokes:var:A}-\eqref{eq:stokes:var:B}
  with $\fv\in\big[\HS{t}(\Omega)\big]^2$, $0\leq\ts$.
  Let $(\uvh,\psh)\in\Vvhk\times\Qshkk$
  be the solution of the virtual element variational formulation
  \eqref{eq:stokes:vem:A}-\eqref{eq:stokes:vem:B} under the mesh
  regularity assumptions $\textbf{(M1)}-\textbf{(M2)}$.
  Then, it holds:
  \begin{align}
    \label{eq:theo:L2:estimates}
    \norm{\uv-\uvh}{0,\Omega}
    \leq \Cs\bigg( \hh^{s+1}\Big( \norm{\uv}{s+1,\Omega} + \norm{\ps}{s,\Omega} \Big)
    + \hh^{\min(t,\kb)+1}\norm{\fv}{t,\Omega} \bigg)
  \end{align}
  for some real, strictly positive constant $\Cs$ independent of $\hh$
  and where $\kb$ is defined as in~\eqref{eq:fvh:def}.
\end{theorem}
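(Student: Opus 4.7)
The plan is to prove the $\LTWO$ velocity estimate by a standard Aubin--Nitsche duality argument, tailored to the non-conforming nature of the virtual element method. I would first introduce the dual Stokes problem: find $(\wv, r)\in \big[\HONEzr(\Omega)\big]^2\times \LTWOzr(\Omega)$ such that
\begin{align*}
 \as(\vv,\wv) + \bs(\vv,r) = (\uv-\uvh, \vv)\quad\forall \vv \in \big[\HONEzr(\Omega)\big]^2,\qquad \bs(\wv, \qs) = 0\quad\forall \qs\in \LTWOzr(\Omega).
\end{align*}
Assuming the domain is convex (or that $\ps$-style elliptic regularity holds on $\Omega$), I would invoke the regularity bound $\norm{\wv}{2,\Omega}+\norm{r}{1,\Omega}\le \Cs\norm{\uv-\uvh}{0,\Omega}$. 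Testing with $\vv=\uv-\uvh$ gives the starting identity
\begin{align*}
 \norm{\uv-\uvh}{0,\Omega}^2 = \as(\uv-\uvh,\wv) + \bs(\uv-\uvh,r).
\end{align*}

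Next, I would introduce the interpolants $\wvI\in\Vvhk$ and $r_{\INTP}\in \Qshkk$ from Lemma~\ref{lemma:interpolation:error} and split each term. For the pressure side, $\bs(\uv-\uvh, r-r_{\INTP})$ is controlled by $\Cs\hh\,\snorm{\uv-\uvh}{1,\Omega}\snorm{r}{1,\Omega}$ using Cauchy--Schwarz and Lemma~\ref{lemma:interpolation:error}, while $\bs(\uv-\uvh, r_{\INTP})=0$ since $\bs(\uv,r_{\INTP})=0$ by the exact incompressibility and $\bs(\uvh, r_{\INTP}) = \bsh(\uvh,r_{\INTP})=0$ by~\eqref{eq:bsh=bs} and~\eqref{eq:stokes:vem:B}. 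For the velocity side, $\as(\uv-\uvh, \wv-\wvI)\le \Cs\hh\,\snorm{\uv-\uvh}{1,\Omega}\snorm{\wv}{2,\Omega}$. The remaining, and most delicate, piece is $\as(\uv-\uvh,\wvI)$.

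The heart of the argument is then to rewrite $\as(\uv-\uvh,\wvI)$ by inserting $\pm\ash(\uvh,\wvI)$ and using the continuous equation~\eqref{eq:stokes:var:A} tested with $\wvI$ and the discrete equation~\eqref{eq:stokes:vem:A}, yielding
\begin{align*}
 \as(\uv-\uvh,\wvI) = \big[(\fv,\wvI)-\bil{\fvh}{\wvI}\big] + \big[\bs(\wvI,\psh-\ps)\big] + \sum_{\P\in\Th}\big[\asPh(\uvh,\wvI)-\asP(\uvh,\wvI)\big],
\end{align*}
where I used~\eqref{eq:bsh=bs} to identify $\bsh(\wvI,\psh)$ with $\bs(\wvI,\psh)$. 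The right-hand side residual is bounded by~\eqref{eq:fv:bound:0}--\eqref{eq:fv:bound:1}, giving the $\hh^{\min(t,\kb)+1}$ factor. The pressure difference term is handled by inserting $\pm\psI$ and repeating the argument used for $\bs(\uv-\uvh,r)$ (with $\wvI$ in place of $\uv-\uvh$), so that Lemma~\ref{lemma:interpolation:error} provides an $\Cs\hh\,\snorm{\wvI}{1,\Omega}$-type gain. The non-conformity term $\asPh(\uvh,\wvI)-\asP(\uvh,\wvI)$ is treated by inserting $\pm\uv_{\pi}$ and $\pm\wv_{\pi}$, exploiting the polynomial consistency~\eqref{eq:consistency} of $\asPh(\cdot,\cdot)$ and its continuity~\eqref{eq:ash:local:continuity} to produce $\Cs\hh(\snorm{\uv-\uvh}{1,\hh}+\snorm{\uv-\uv_{\pi}}{1,\hh})\snorm{\wvI}{1,\Omega}$.

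Finally, collecting all contributions and using $\snorm{\wvI}{1,\Omega}\le \Cs\norm{\wv}{2,\Omega}$ together with the elliptic regularity bound, one factor of $\norm{\uv-\uvh}{0,\Omega}$ is absorbed into the left-hand side, leaving the estimate
\begin{align*}
 \norm{\uv-\uvh}{0,\Omega} \le \Cs\hh\big(\snorm{\uv-\uvh}{1,\Omega}+\norm{\ps-\psh}{0,\Omega}+\snorm{\uv-\uv_{\pi}}{1,\hh}\big) + \Cs\,\hh^{\min(t,\kb)+1}\norm{\fv}{t,\Omega}.
\end{align*}
Plugging in the $\HONE$ estimate of Theorem~\ref{theorem:H1:estimate} together with Lemma~\ref{lemma:projection:error} yields the claimed rate. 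The main obstacle is the bookkeeping of the non-conformity: controlling $\asPh(\uvh,\wvI)-\asP(\uvh,\wvI)$ requires that one extracts an $\ORDER{\hh}$ factor from the consistency defect without losing regularity of $\wv$, which is why the identity $\asPh(\uv_{\pi},\wv_{\pi})=\asP(\uv_{\pi},\wv_{\pi})$ must be used carefully on both arguments of the bilinear form.
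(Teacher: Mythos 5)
Your proposal follows the same Aubin--Nitsche duality argument as the paper: the same dual Stokes problem with the elliptic regularity bound, the same splitting after inserting the interpolants of the dual pair, the same treatment of the right-hand-side residual via \eqref{eq:fv:bound:0}--\eqref{eq:fv:bound:1}, the vanishing of $\bs(\uv-\uvh,r_{\INTP})$ via \eqref{eq:stokes:var:B}, \eqref{eq:bsh=bs} and \eqref{eq:stokes:vem:B}, and the same consistency trick (inserting $\uv_{\pi}$ and a polynomial approximation of the dual velocity on both arguments) for the non-conformity defect; the intermediate inequality you assemble at the end is exactly the one the paper feeds into Theorem~\ref{theorem:H1:estimate}.

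There is, however, one step where the mechanism you describe does not deliver the rate you claim: the pressure-consistency term $\bs(\wvI,\psh-\ps)$. You propose to ``insert $\pm\psI$ and repeat the argument used for $\bs(\uv-\uvh,r)$,'' obtaining ``an $\Cs\hh\,\snorm{\wvI}{1,\Omega}$-type gain.'' After inserting $\psI$, the piece $\bs(\wvI,\psh-\psI)$ does vanish (it equals $\bs(\wv,\psh-\psI)=0$ by the interpolant property and $\DIV\wv=0$), but the surviving piece $\bs(\wvI,\psI-\ps)$, bounded directly by $\Cs\snorm{\wvI}{1,\Omega}\,\norm{\psI-\ps}{0,\Omega}\leq \Cs\norm{\uv-\uvh}{0,\Omega}\,\hh^{s}\norm{\ps}{s,\Omega}$, is one power of $\hh$ short: the interpolation error of the \emph{primal} pressure only supplies $\hh^{s}$, whereas in the term $\bs(\uv-\uvh,r-r_{\INTP})$ the single power of $\hh$ came from the $H^1$-regularity of the \emph{dual} pressure. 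The missing ingredient is to exploit $\DIV\wv=0$ once more: since $\bs(\wv,\psh-\ps)=0$, one has $\bs(\wvI,\psh-\ps)=\bs(\wvI-\wv,\psh-\ps)\leq \Cs\snorm{\wvI-\wv}{1,\Omega}\,\norm{\psh-\ps}{0,\Omega}\leq \Cs\hh\,\norm{\wv}{2,\Omega}\,\norm{\psh-\ps}{0,\Omega}$, which is precisely the $\Cs\hh\,\norm{\ps-\psh}{0,\Omega}$ contribution appearing in your final display and is the paper's bound for its term $\TERM{R}{22}$. With this one-line correction the rest of your argument goes through.
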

\begin{proof}
  In the derivation of the $\LTWO$ error for the virtual element
  approximation of the velocity vector $\uv$, we make use of the
  solution
  $(\psiv,\phis)\in\big[\HTWO(\Omega)\cap\HONEzr(\Omega)\big]^2\times\big[\HONE(\Omega)\cap\LTWOzr(\Omega)\big]$
  of the dual problem:
  \begin{align}
    -\Delta\psiv - \nabla\phis = \uv -\uvh & \qquad\textrm{in~}\Omega,\label{eq:dual:A}\\[0.2em]
    \DIV\psiv = 0                          & \qquad\textrm{in~}\Omega.\label{eq:dual:B}
  \end{align}
  Since $\psiv\in\big[\HTWO(\Omega)\big]^2$ and
  $\phis\in\HONE(\Omega)$, the application of
  Lemmas~\ref{lemma:projection:error}
  and~\ref{lemma:interpolation:error} yields
  \begin{align}
    \snorm{\psiv-\psivI}{1,\Omega} + \snorm{\psiv-\psiv_{\pi}}{1,\hh}
    &\leq \Cs\hh\snorm{\psiv}{2,\Omega},
    \label{eq:bound:psiv}\\[0.5em]
    \norm{\phis-\phisI}{0,\Omega}
    &\leq \Cs\hh\snorm{\phis}{1,\Omega},
    \label{eq:bound:phis}
  \end{align}
  where $\psivI$ and $\phisI$ are the virtual element interpolant of
  $\psiv$ and $\phis$ in $\Vvhk$ and $\Qshkk$, respectively,
  $\psiv_{\pi}$ is the polynomial approximation of $\psiv$ according
  to Lemma~\ref{lemma:projection:error}, and $\norm{\,\cdot\,}{1,\hh}$
  in~\eqref{eq:bound:psiv} is the ``broken'' norm defined in
  Eq.~\eqref{eq:broken:seminorm}.
  Under the assumption that the domain $\Omega$ is convex, the
  solution pair $(\psiv,\phis)$ has the following regularity property:
  \begin{align}
    \norm{\psiv}{2,\Omega} + \norm{\phis}{1,\Omega} \leq \Cs\norm{\uv-\uvh}{0,\Omega}.
    \label{eq:regularity:bound:psiv}
  \end{align}

  Then, we use the definition of the $\LTWO$ norm, and note that the
  boundary integral on $\partial\Omega$ of $\nv\cdot(\uv-\uvh)$, which
  is originated by an integration by parts, is zero since $\uv=\uvh=0$
  on $\partial\Omega$, and we find that
  \begin{align}
    \begin{array}{lll}
      &\norm{\uv-\uvh}{0,\Omega}^2 = \int_{\Omega}(\uv-\uvh)\cdot(\uv-\uvh)\dV
      &\hspace{-5cm}\mbox{\big[use~\eqref{eq:dual:A}\big]}\nonumber\\[0.5em]
      &\qquad= \int_{\Omega}\big(-\Delta\psiv-\nabla\phis\big)\cdot(\uv-\uvh)\dV
      &\hspace{-5cm}\mbox{\big[integrate by parts both terms\big]}\nonumber\\[1.em]
      &\qquad= \int_{\Omega}\nabla\psiv\cdot\nabla(\uv-\uvh)\dV + \int_{\Omega}\phis\,\DIV(\uv-\uvh)\dV
      &\hspace{-5cm}\mbox{\big[use~\eqref{eq:as:def}-\eqref{eq:bs:def}\big]}\nonumber\\[1.25em]
      &\qquad= \as(\psiv,\uv-\uvh) - \bs(\uv-\uvh,\phis)
      &\hspace{-5cm}\mbox{\big[add $\pm\psivI$ and $\pm\phisI$ \big]}\nonumber\\[1.em]
      &\qquad= \big[ \as(\psiv-\psivI,\uv-\uvh)  \big]
      +  \big[ \as(\psivI,\uv-\uvh)        \big]
      +  \big[ -\bs(\uv-\uvh,\phis-\phisI) \big]
      +  \big[ -\bs(\uv-\uvh,\phisI)       \big]
      &\nonumber\\[1.em]
      &\qquad= \big[\TERM{R}{1}\big] + \big[\TERM{R}{2}\big] + \big[\TERM{R}{3}\big] + \big[\TERM{R}{4}\big].
      \label{eq:L2:error-estimate}  
    \end{array}
  \end{align}
  We estimate separately each term $\TERM{R}{i}$, $i=1,\ldots,4$.
  
  \medskip
  We derive an upper bound for term $\TERM{R}{1}$ by using the
  continuity of the bilinear form $\as(\cdot,\cdot)$ and
  inequalities~\eqref{eq:bound:psiv}
  and~\eqref{eq:regularity:bound:psiv}:
  \begin{align}
    \ABS{\TERM{R}{1}}
    &
    = \ABS{\as(\psiv-\psivI,\uv-\uvh)}
    \leq \snorm{\psiv-\psivI}{1,\Omega}\,\snorm{\uv-\uvh}{1,\Omega}
    \STACKON{\leq}{\eqref{eq:bound:psiv}} \Cs\hs\snorm{\psiv}{2,\Omega}\,\snorm{\uv-\uvh}{1,\Omega}
    \nonumber\\[0.5em]
    &
    \STACKON{\leq}{\eqref{eq:regularity:bound:psiv}} \Cs\hs\norm{\uv-\uvh}{0,\Omega}\,\snorm{\uv-\uvh}{1,\Omega}.
  \end{align}
  
  \medskip
  We split term $\TERM{R}{2}$ into three subterms by
  using~\eqref{eq:stokes:var:A}, adding~\eqref{eq:stokes:vem:A} and
  rearranging the terms:
  \begin{align}
    \TERM{R}{2}
    &
    = \as(\psivI,\uv-\uvh)
    = \as(\uv,\psivI) - \as(\uvh,\psivI)
    \nonumber\\[0.5em]
    &= \scal{\fv}{\psivI} - \bs(\psivI,\ps) - \as(\uvh,\psivI)
    + \Big( \ash(\uvh,\psivI) + \bsh(\psivI,\psh) - \bil{\fvh}{\psivI} \Big)
    \nonumber\\[0.5em]
    &
    = \big[ \scal{\fv}{\psivI} - \bil{\fvh}{\psivI} \big]
    + \big[ \bsh(\psivI,\psh)  - \bs(\psivI,\ps)    \big]
    + \big[ \ash(\uvh,\psivI)  - \as(\uvh,\psivI)   \big]
    \nonumber\\[0.5em]
    &= \TERM{R}{21} + \TERM{R}{22} + \TERM{R}{23}.
  \end{align}
  To bound term $\TERM{R}{21}$, we use
  inequalities~\eqref{eq:fv:bound:0} and~\eqref{eq:fv:bound:1},
  the boundedness of the interpolation operator, and
  inequality~\eqref{eq:regularity:bound:psiv}, and we find that
  \begin{align}
    \ABS{\TERM{R}{21}}
    \leq \Cs\hh^{\min(s,\kb)+1}\norm{\fv}{s,\Omega} \snorm{\psivI}{1,\Omega}
    \leq \Cs\hh^{\min(s,\kb)+1}\norm{\fv}{s,\Omega} \norm {\uv-\uvh}{0,\Omega}.
  \end{align}
  To derive an upper bound for term $\TERM{R}{22}$, we first note that
  $\bsh(\psivI,\psh)=\bs(\psivI,\psh)$ from~\eqref{eq:bsh=bs} and
  that we can subtract $\bs(\psiv,\psh-\ps)=0$, which is zero since
  $\DIV\psiv=0$, cf.~\eqref{eq:dual:B}.
  Then, we use the Cauchy-Schwarz inequality,
  inequalities~\eqref{eq:bound:psiv}
  and~\eqref{eq:regularity:bound:psiv}, and we find that

  \noindent
  \begin{align}
    \ABS{\TERM{R}{22}}
    &
    = \ABS{\bs(\psivI,\psh-\ps)}
    = \ABS{\bs(\psivI-\psiv,\psh-\ps)}
    \leq \norm{\DIV(\psivI-\psiv)}{0,\Omega}\,\norm{\psh-\ps}{0,\Omega}
    \nonumber\\[0.5em]
    &\leq \Cs\snorm{\psivI-\psiv}{1,\Omega}  \,\norm{\psh-\ps}{0,\Omega}
    \STACKON{\leq}{\eqref{eq:bound:psiv}}            \Cs\hh\snorm{\psiv}{2,\Omega}       \,\norm{\psh-\ps}{0,\Omega}
    \STACKON{\leq}{\eqref{eq:regularity:bound:psiv}} \Cs\hh\norm{\uv-\uvh}{0,\Omega}    \,\norm{\psh-\ps}{0,\Omega}.
  \end{align}

  \noindent
  To estimate $\TERM{R}{23}$, we first note that the local consistency
  property of the bilinear form $\ash(\cdot,\cdot)$ implies that
  \begin{align}
    \asPh(\uvh,\psivI) - \asP(\uvh,\psivI)
    &= \asPh(\uvh-\uv_{\pi},\psivI-\psiv_{\pi}) - \asP(\uvh-\uv_{\pi},\psivI-\psiv_{\pi}),
    \label{eq:R23:aux}
  \end{align}
  where $\uv_{\pi}$ and $\psiv_{\pi}$ are suitable polynomial
  approximations of $\uv$ and $\psiv$ satisfying the assumptions of
  Lemma~\ref{lemma:projection:error}.
  Then, we use this identity, Lemmas~\ref{lemma:projection:error}
  and~\ref{lemma:interpolation:error} and
  inequality~\eqref{eq:regularity:bound:psiv} to obtain the bound on
  $\TERM{R}{23}$ as follows:
  \begin{align}
    \ABS{\TERM{R}{23}}
    &= \ABS{\ash(\uvh,\psivI) - \as(\uvh,\psivI)}
    =  \ABS{\sum_{\P\in\Th}\Big( \asPh(\uvh-\uv_{\pi},\psivI-\psiv_{\pi}) - \asP(\uvh-\uv_{\pi},\psivI-\psiv_{\pi}) \Big)}
    \nonumber\\[0.125em]
    &
    \leq (1+\alpha^*)\sum_{\P\in\Th} \snorm{\uvh-\uv_{\pi}}{1,\P}\,\snorm{\psivI-\psiv_{\pi}}{1,\P}
    \leq (1+\alpha^*)
    \left(\sum_{\P\in\Th} \snorm{\uvh-\uv_{\pi}}{1,\P}^2     \right)^{\frac12}
    \left(\sum_{\P\in\Th} \snorm{\psivI-\psiv_{\pi}}{1,\P}^2 \right)^{\frac12}.
    \label{eq:L2:proof:R23:00}
  \end{align}
  We add and subtract $\uv$ and $\psiv$, and use the triangular inequality to find that
  \begin{align}
    \snorm{\uvh-\uv_{\pi}}{1,\P}^2
    &= \left( \snorm{\uvh-\uv}{1,\P} + \snorm{\uv-\uv_{\pi}}{1,\P} \right)^2
    \leq 2\snorm{\uvh-\uv}{1,\P}^2 + 2\snorm{\uv-\uv_{\pi}}{1,\P}^2,
    \label{eq:R23:aux:1}\\[0.5em]
    \snorm{\psivI-\psiv_{\pi}}{1,\P}^2
    &= \left( \snorm{\psivI-\psiv}{1,\P} + \snorm{\psiv-\psiv_{\pi}}{1,\P} \right)^2
    \leq 2\snorm{\psivI-\psiv}{1,\P}^2 + 2\snorm{\psiv-\psiv_{\pi}}{1,\P}^2.
    \label{eq:R23:aux:2}
  \end{align}
  Using
  inequalities~\eqref{eq:R23:aux:1},~\eqref{eq:R23:aux:2},~\eqref{eq:bound:psiv},
  and~\eqref{eq:regularity:bound:psiv}, we find that
  \begin{align}
    \ABS{\TERM{R}{23}}
    &
    \STACKON{\leq}{\eqref{eq:R23:aux:1},\eqref{eq:R23:aux:2}}
    \Cs
    \Big(\snorm{\uvh-\uv}{1,\Omega}+\snorm{\uv-\uv_{\pi}}{1,\hh}\Big)
    \Big(\snorm{\psivI-\psiv}{1,\Omega}+\snorm{\psiv-\psiv_{\pi}}{1,\hh}\Big)
    \nonumber\\[0.25em]
    &\STACKON{\leq}{\eqref{eq:bound:psiv}}
    \Cs\Big(\snorm{\uvh-\uv}{1,\Omega}+\snorm{\uv-\uv_{\pi}}{1,\hh}\Big)\,\,
    \hh\snorm{\psiv}{2,\Omega}
    \STACKON{\leq}{\eqref{eq:regularity:bound:psiv}}
    \Cs\hh\Big(\snorm{\uvh-\uv}{1,\Omega} + \snorm{\uv-\uv_{\pi}}{1,\hh}\Big)\,\norm{\uv-\uvh}{0,\Omega}.
  \end{align}
  
  \medskip
  We derive an upper bound for term $\TERM{R}{3}$ by using the
  Cauchy-Schwarz inequality, and the
  inequalities~\eqref{eq:bound:phis}
  and~\eqref{eq:regularity:bound:psiv}:
  \begin{align}
    \ABS{\TERM{R}{3}}
    &= \ABS{\bs(\uvh-\uv,\phis-\phisI)}
    \leq \Cs\norm{\DIV(\uvh-\uv)}{0,\Omega}\,\norm{\phis-\phisI}{0,\Omega}
    \leq \Cs\snorm{\uvh-\uv}{1,\Omega}\,\norm{\phis-\phisI}{0,\Omega}
    \nonumber\\[0.5em]
    &
    \STACKON{\leq}{\eqref{eq:bound:phis}}            \Cs\snorm{\uvh-\uv}{1,\Omega}\,\hh\snorm{\phis}{1,\Omega}
    \STACKON{\leq}{\eqref{eq:regularity:bound:psiv}} \Cs\hh\snorm{\uvh-\uv}{1,\Omega}\,\norm{\uvh-\uv}{0,\Omega}.
  \end{align}
  
  \medskip
  Finally, we note that term $\TERM{R}{4}$ is zero by
  using~\eqref{eq:stokes:var:B} and~\eqref{eq:stokes:vem:B} (set
  $\qs=\qsh=\phisI$):
  \begin{align}
    \TERM{R}{4} =
    \bs(\uv-\uvh,\phisI) =
    \bs(\uv,\phisI) - \bsh(\uvh,\phisI) = 0.
  \end{align}
  
  The assertion of the theorem follows by using the bounds of terms
  $\TERM{R}{i}$, for $i=1,2,3$ and $\TERM{R}{4}=0$ to estimate the
  left-hand side of~\eqref{eq:theo:L2:estimates},
  Theorem~\ref{theorem:H1:estimate} to bound the resulting term
  $\snorm{\uvh-\uv}{1,\Omega}+\norm{\ps-\psh}{0,\Omega}$ and
  Lemma~\ref{lemma:projection:error} to bound
  $\snorm{\uv-\uv_{\pi}}{1,\hh}$.
\end{proof}


\section{Numerical experiments}
\label{sec:numerical}

\begin{figure}
  \centering
  \begin{tabular}{ccc}
    \hspace{-0.42cm}\includegraphics[scale=0.35]{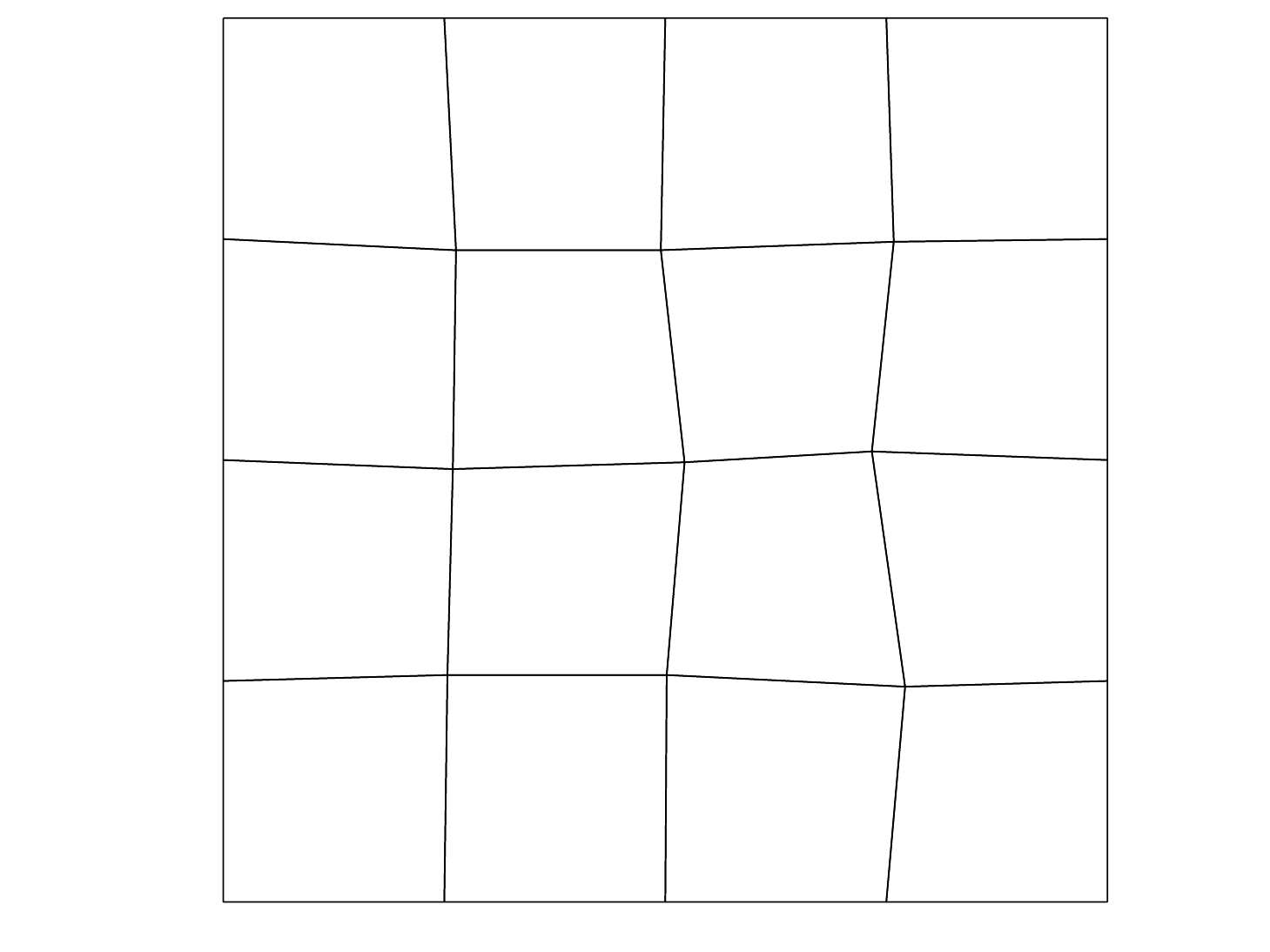} & 
    \hspace{-0.42cm}\includegraphics[scale=0.35]{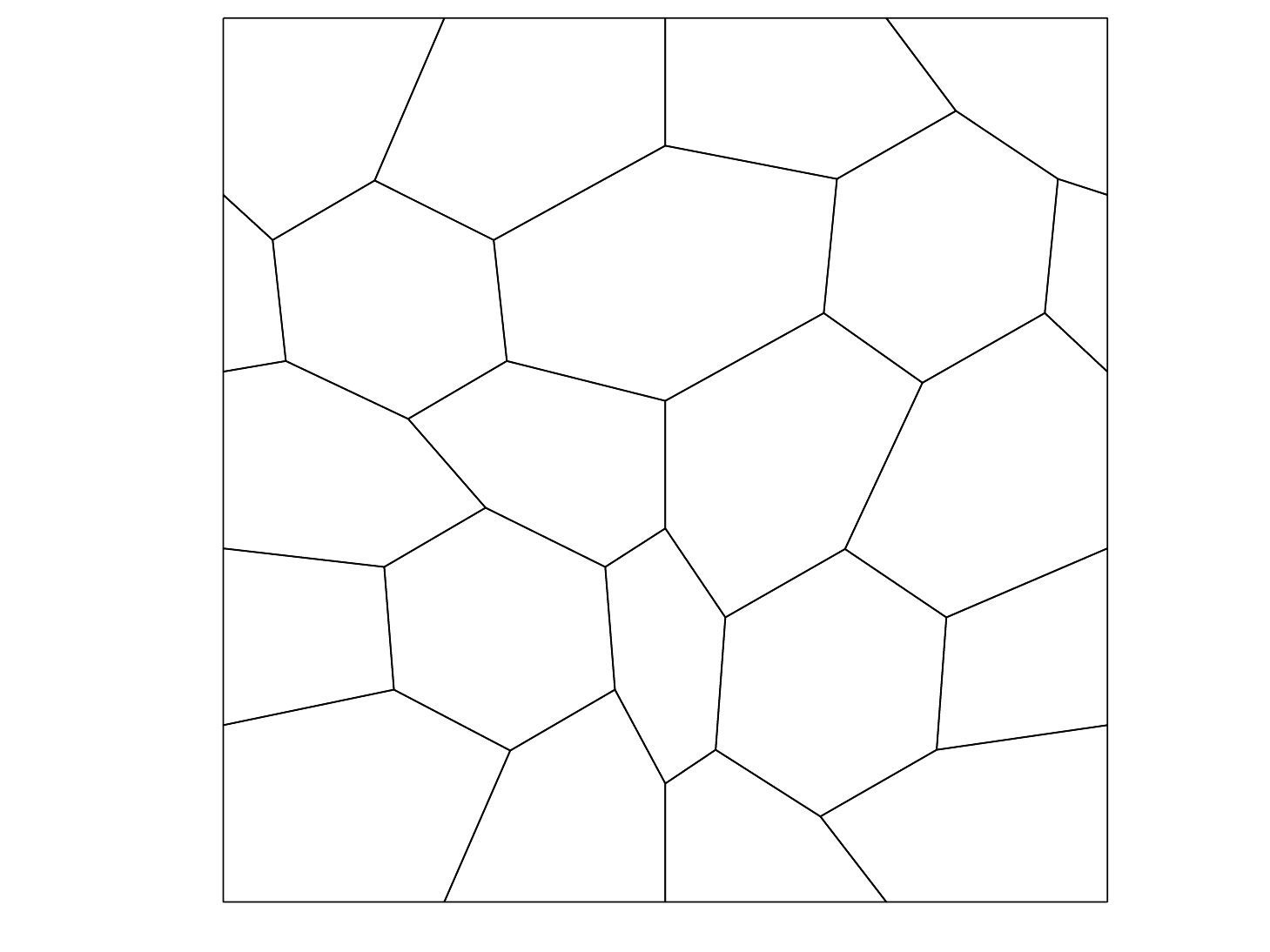} & 
    \hspace{-0.42cm}\includegraphics[scale=0.35]{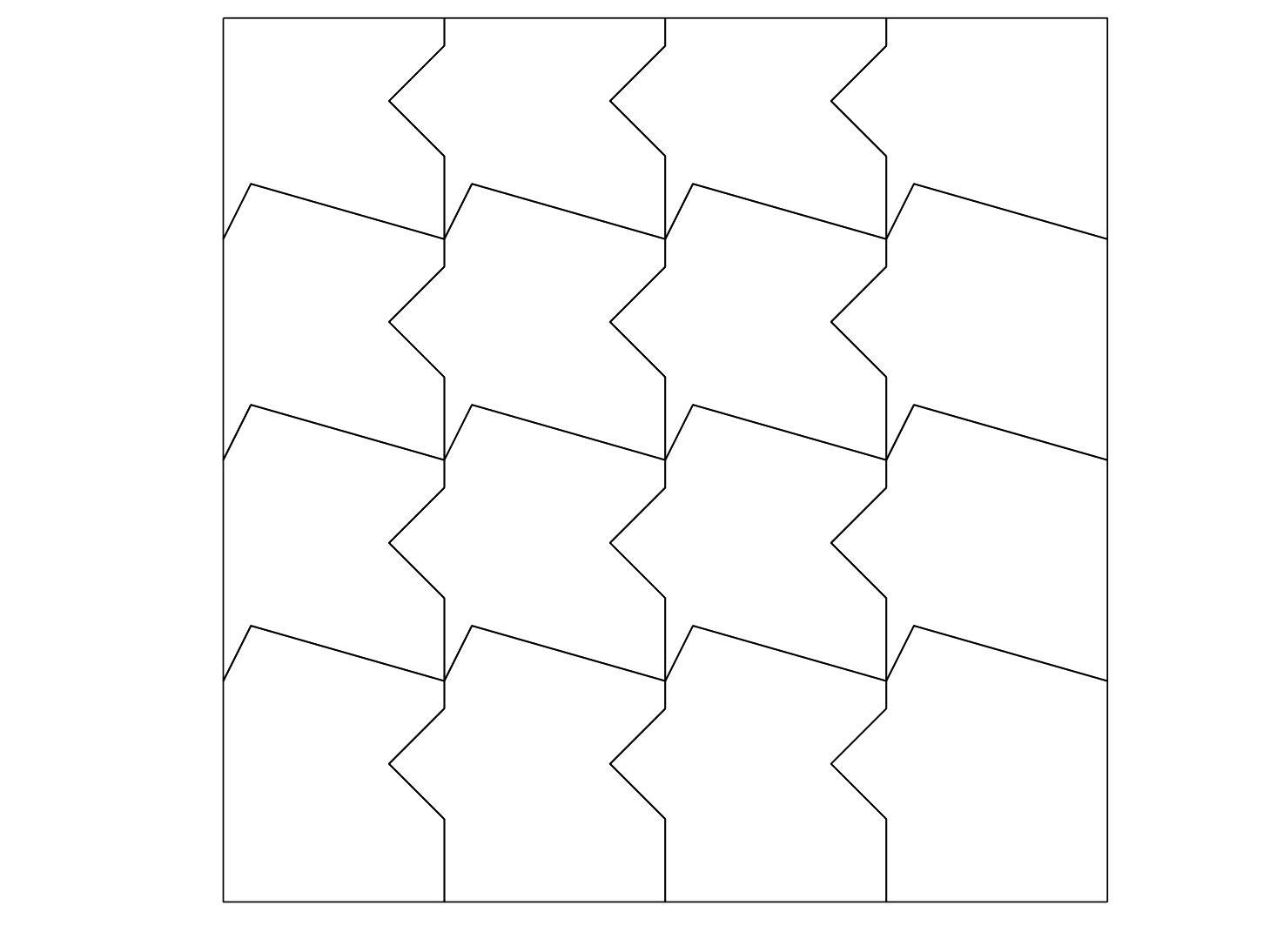}   
    \\[1em]
    \hspace{-0.42cm}\includegraphics[scale=0.35]{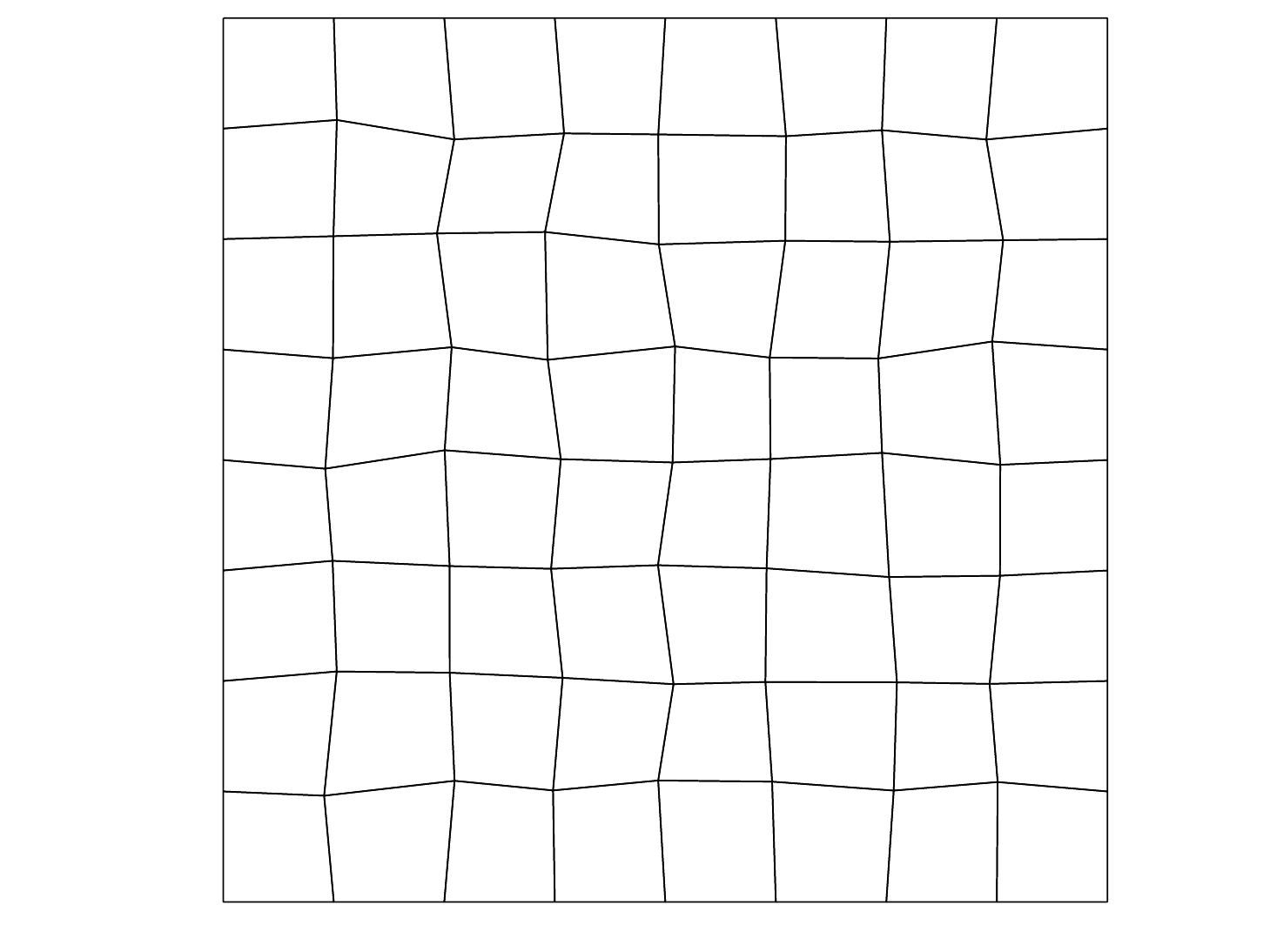} & 
    \hspace{-0.42cm}\includegraphics[scale=0.35]{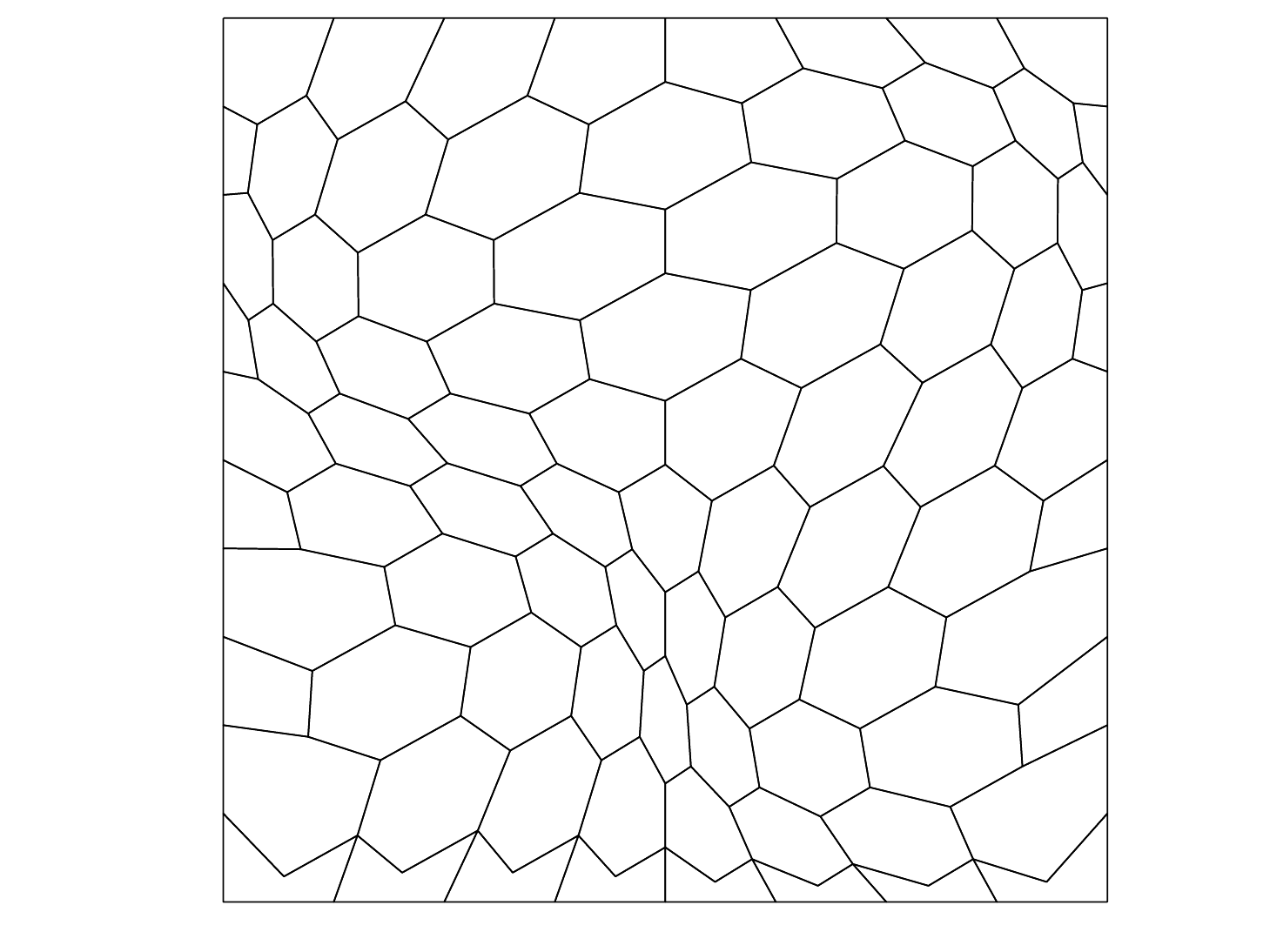} & 
    \hspace{-0.42cm}\includegraphics[scale=0.35]{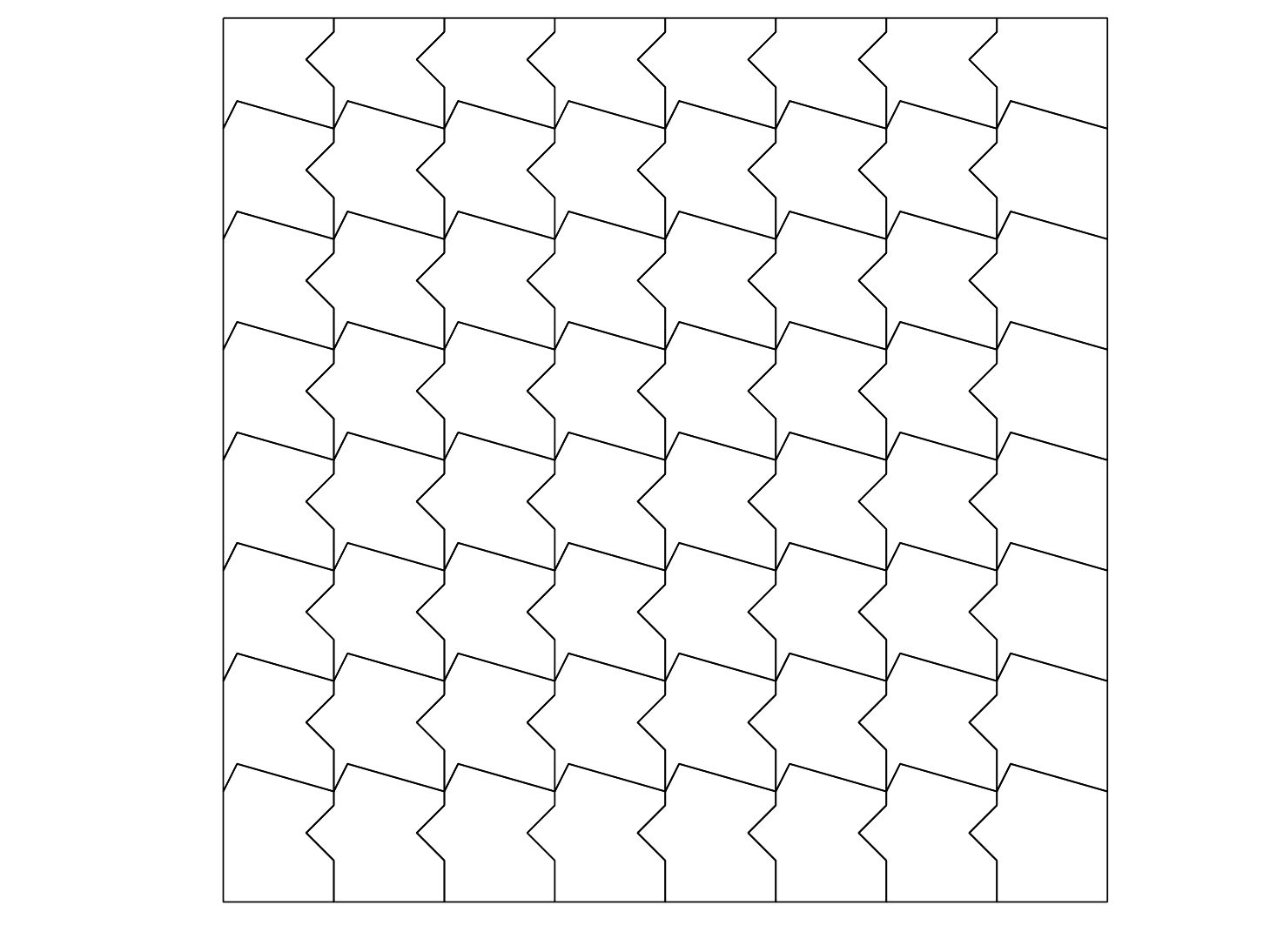}   
    \\[0.5em]        
    \hspace{-2mm} $\MESH{1}$ & \hspace{-2mm}$\MESH{2}$ & \hspace{-2mm}$\MESH{3}$
  \end{tabular}
  \caption{Base meshes (top row) and first refinement meshes (bottom
    row) of the three mesh families used in this section:
    $(\MESH{1})$ random quadrilateral meshes;
    $(\MESH{2})$ general polygonal meshes; 
    $(\MESH{3})$ concave element meshes;
    }
  \label{fig:Meshes}
\end{figure}

We assess the convergence property of the two virtual element
formulations considered in this paper by numerically solving problem
\eqref{eq:stokes:var:A}-\eqref{eq:stokes:var:B} on the computational
domain $\Omega=[0,1]\times[0,1]$.
The Dirichlet boundary conditions and the source term are set
accordingly to the manufactured solution $\uv=(\us_x,\us_y)^T$ and
$\ps$ given by
\begin{align*}
  \us_x(x,y) &=  \cos{(2\pi\xs)}\sin{(2\pi\ys)},\\
  \us_y(x,y) &= -\sin{(2\pi\xs)}\cos{(2\pi\ys)},\\
  \ps  (x,y) &= e^{\xs+\ys}-(e-1)^2.
\end{align*}
Our implementation of the virtual element method uses the basis of
orthogonal polynomials in all mesh elements, which is well-known to
control the ill-conditioning of the final linear system very
efficiently.

We run our virtual element solver on three mesh families respectively
composed by random quadrilateral meshes ($\MESH{1}$), general
polygonal meshes ($\MESH{2}$), and concave element meshes
($\MESH{3}$).
The construction of these mesh families is rather standard in the
literature of the VEM and its description can easily be found, for
example, in~\cite{Berrone-Borio-Manzini:2018:CMAME:journal}.
For every mesh family, we consider five refinements.
The base mesh and the first refined mesh of each family are shown in
Figure~\ref{fig:Meshes}; mesh data are reported in
Tables~\ref{tab:mesh-diameter} and~\ref{tab:mesh-elem-vrtx}.

\newcommand{\TABROW}[4]{ #1 & #2 & #3 & #4 \\}
\begin{table}[t!]
  \begin{center}
    \begin{tabular}{c|c|c|c}
      \TABROW{Level }{          $\MESH{1}$  }{          $\MESH{2}$ }{         $\MESH{3}$  }
      \hline
      \TABROW{    1 }{  $3.72 \cdot 10^{-1}$ }{ $4.26 \cdot 10^{-1}$ }{ $3.81 \cdot 10^{-1}$ }
      \TABROW{    2 }{  $1.99 \cdot 10^{-1}$ }{ $2.50 \cdot 10^{-1}$ }{ $1.91 \cdot 10^{-1}$ }
      \TABROW{    3 }{  $1.01 \cdot 10^{-1}$ }{ $1.25 \cdot 10^{-1}$ }{ $9.54 \cdot 10^{-2}$ }
      \TABROW{    4 }{  $5.17 \cdot 10^{-2}$ }{ $6.21 \cdot 10^{-2}$ }{ $4.77 \cdot 10^{-2}$ }
      \TABROW{    5 }{  $2.61 \cdot 10^{-2}$ }{ $3.41 \cdot 10^{-2}$ }{ $2.38 \cdot 10^{-2}$ } 
    \end{tabular}
    \caption{Diameter $h$ of meshes $\MESH{1}$, $\MESH{2}$, and
      $\MESH{3}$. }
      \label{tab:mesh-diameter}
  \end{center}
\end{table}

\renewcommand{\TABROW}[7]{ #1 & #2 & #3 & #4 & #5 & #6 & #7 \\}
\begin{table}[t!]
  \begin{center}
    \begin{tabular}{c|cc|cc|cc}
      Level & \multicolumn{2}{c|}{ $\MESH{1}$ } & \multicolumn{2}{c|}{ $\MESH{2}$ } & \multicolumn{2}{c}{ $\MESH{3}$ }\\ \hline
      \TABROW{     }{   $N_{el}$ }{  $N$ }{ $N_{el}$ }{   $N$ }{  $N_{el}$ }{   $N$ }
      \TABROW{  1  }{        16 }{   25 }{      22 }{    46 }{       16 }{    73 }
      \TABROW{  2  }{        64 }{   81 }{      84 }{   171 }{       64 }{   305 } 
      \TABROW{  3  }{       256 }{  289 }{     312 }{   628 }{      256 }{  1249 }
      \TABROW{  4  }{      1024 }{ 1089 }{    1202 }{  2406 }{     1024 }{  5057 }
      \TABROW{  5  }{      4096 }{ 4225 }{    4772 }{  9547 }{     4096 }{ 20353 }
    \end{tabular}
    \caption{Number of elements $N_{el}$ and vertices $N$ of meshes
      $\MESH{1}$, $\MESH{2}$, and $\MESH{3}$. }
      \label{tab:mesh-elem-vrtx}         
  \end{center}
\end{table}

On any set of refined meshes, we measure the $\HONE$ relative error
for the velocity vector field by applying the formula
\begin{align}
  \text{error}_{\HONE(\Omega)}(\uv) = \frac{\snorm{\uv-\Piz{k}\uvh}{1,\hh}}{\snorm{\uv}{1,\Omega}}
  \approx \dfrac{\snorm{\uv-\uvh}{1,\Omega}}{\snorm{\uv}{1,\Omega}},
  \label{eq:error:H1:velocity}
\end{align}
and the $\LTWO$ relative error by applying the formula
\begin{align}
  \text{error}_{\LTWO(\Omega)}(\uv)
  = \dfrac{\norm{ \uv-\Piz{k}\uvh}{0,\Omega}}{\norm{\uv}{0,\Omega}}
  \approx \dfrac{ \norm{\uv-\uvh}{0,\Omega} }{\norm{\uv}{0,\Omega}}.
  \label{eq:error:L2:velocity}
\end{align}
For the pressure scalar field we measure the $\LTWO(\Omega)$ relative
error by applying the formula
\begin{align}
  \text{error}_{\LTWO(\Omega)}(\ps)
  = \dfrac{\norm{\ps-\psh}{0,\Omega}}{\norm{\ps}{0,\Omega}}.
  \label{eq:error:L2:pressure}
\end{align}
In our implementations, the use of the enhancement spaces only changes
the calculation of the right-hand side of Eq.~\eqref{eq:stokes:vem:A}.
In fact, in the implementations of $\FO$ and $\FT$ using the
non-enhanced space definitions, we approximate the right-hand side
through the projection operator $\Piz{\kb}$ with $\kb=max(0,k-2)$,
while in the ones using the enhanced space definitions, we approximate
the right-hand side through the projection operator $\Piz{\ks}$.
However, since the nonenhanced and the enhanced versions have the same
degrees of freedom, we can always compute the projection operator
$\Piz{\ks}$, and use it to evaluate the approximation error as
in~\eqref{eq:error:H1:velocity} and~\eqref{eq:error:L2:velocity}
above.
In the non-enhanced case, this is equivalent to a sort of
post-processing of $\uvh$, which is known only through its degrees of
freedom, to derive a polynomial approximation of $\uv$ that is defined
on the whole computational domain.

\begin{figure}
  \includegraphics[width=\textwidth,clip=]{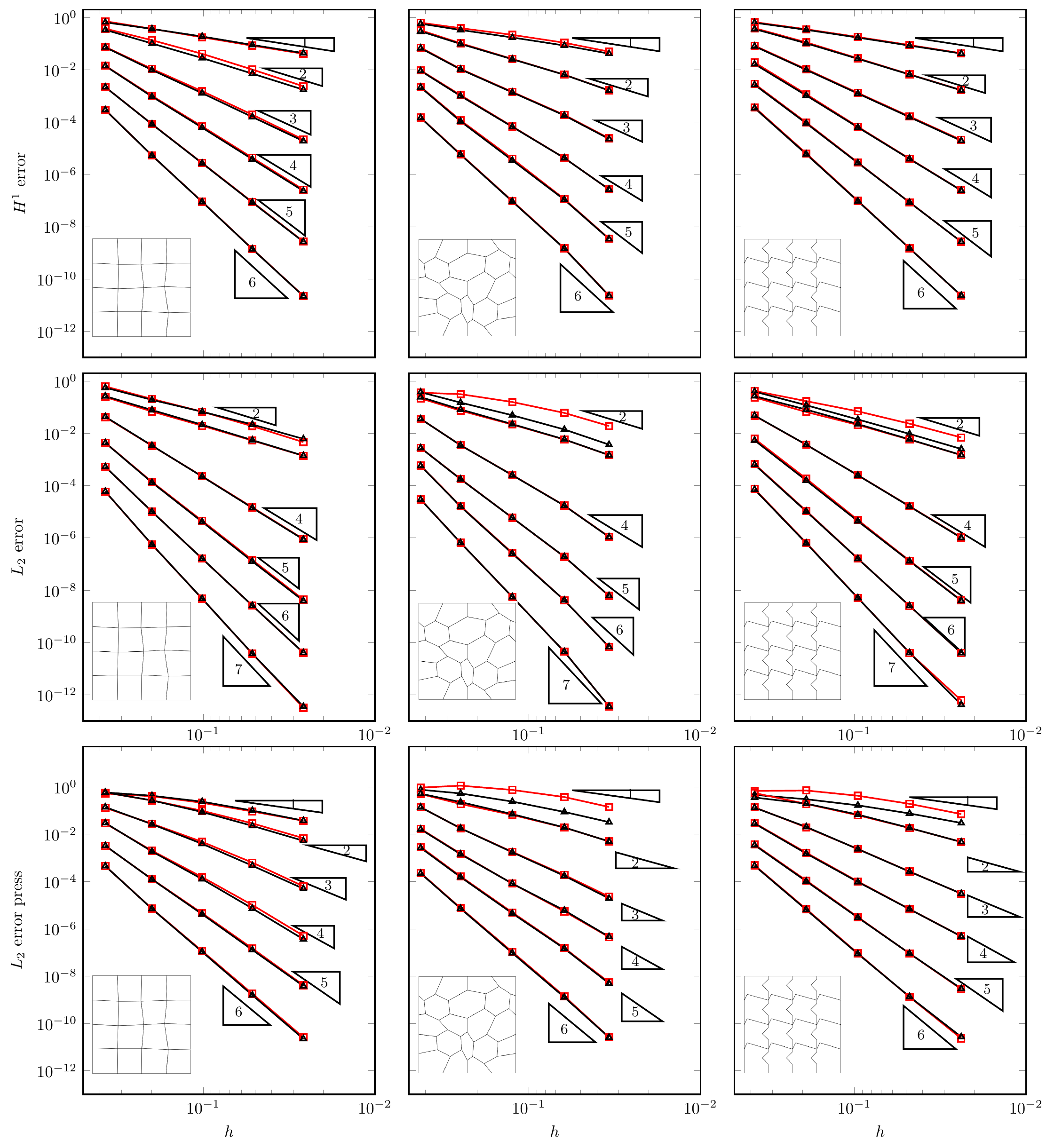} 
  \caption{ Error curves versus $\hh$ for the velocity approximation
    using the energy norm~\eqref{eq:error:H1:velocity} (top panels)
    and the $\LTWO$-norm~\eqref{eq:error:L2:velocity} (mid panels),
    and for the pressure approximation using the
    $\LTWO$-norm~\eqref{eq:error:L2:pressure} (bottom panels).
    Solid (red) lines with square markers show the errors for the
    first formulation using space~\eqref{eq:FO:regular-space:def};
    solid (black) lines with triangular markers show the errors for
    the second formulation using
    space~\eqref{eq:FT:regular-space:def}.
    The right-hand side is approximated by using the projection
    operator $\Piz{\kb}$ with $\kb=max(0,k-2)$. 
    The mesh families used in each calculations are shown in the left
    corner of each panel and the expected convergence rates are
    reflected by the slopes of the triangles and corresponding numeric
    labels.}
  \label{fig:h_errorPi0km2}
\end{figure}

\begin{figure}
  \includegraphics[width=\textwidth,clip=]{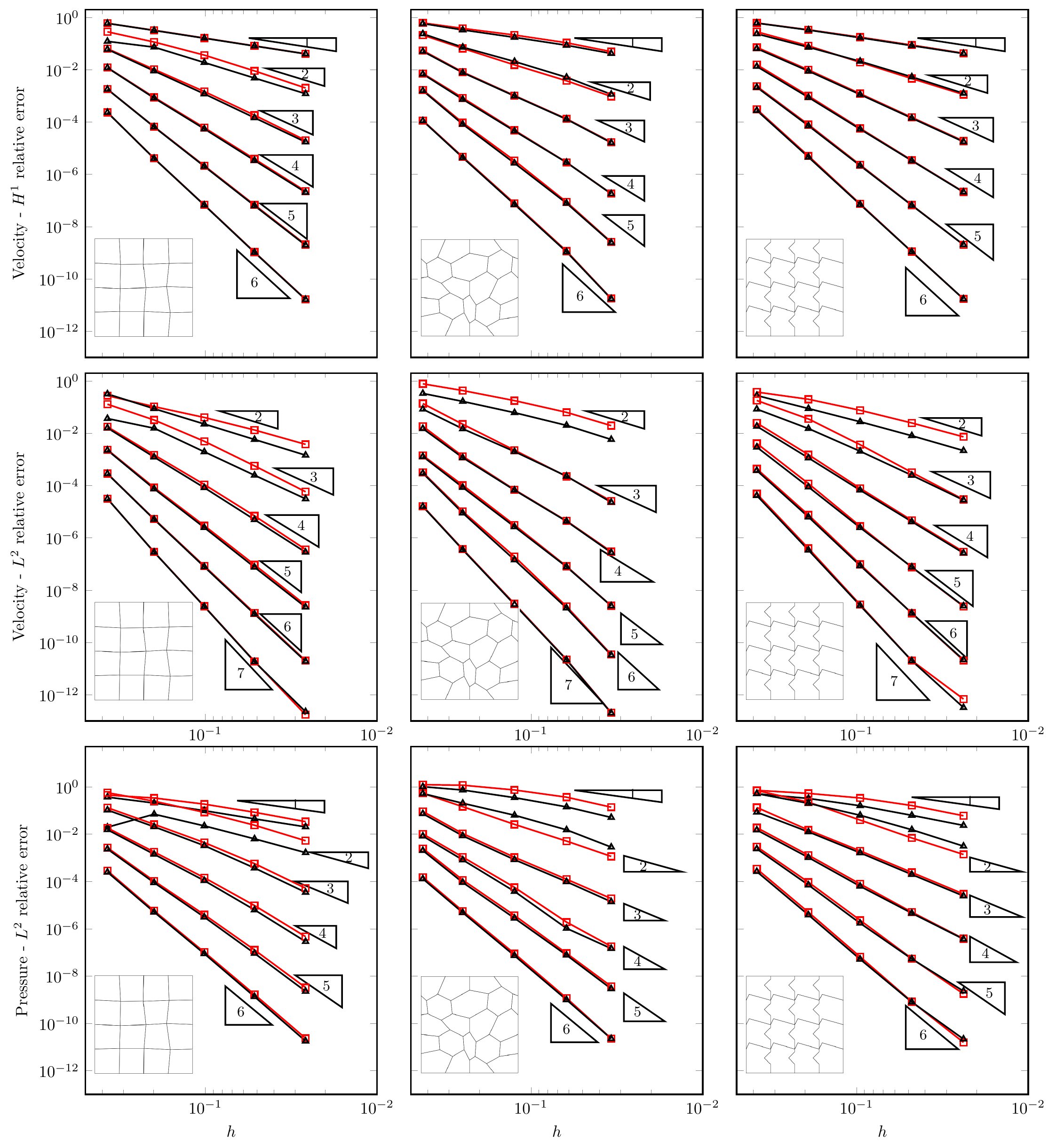} 
  \caption{ Error curves versus $\hh$ for the velocity
    approximation using the energy norm~\eqref{eq:error:H1:velocity}
    (top panels) and the $\LTWO$-norm~\eqref{eq:error:L2:velocity}
    (mid panels), and for the pressure approximation using the
    $\LTWO$-norm~\eqref{eq:error:L2:pressure} (bottom panels).
    Solid (red) lines with square markers show the errors for the first
    formulation using space~\eqref{eq:FO:regular-space:def};
    solid (black) lines with triangular markers show the errors for
    the second formulation using
    space~\eqref{eq:FT:regular-space:def}.
    The right-hand side is approximated by using the projection
    operator $\Piz{k}$.
    The mesh families used in each calculations are shown in the left
    corner of each panel and the expected convergence rates are
    reflected by the slopes of the triangles and corresponding numeric
    labels.}
  \label{fig:h_errorPi0k}
\end{figure}

\ifARXIV

\begin{figure}
  \includegraphics[width=\textwidth,clip=]{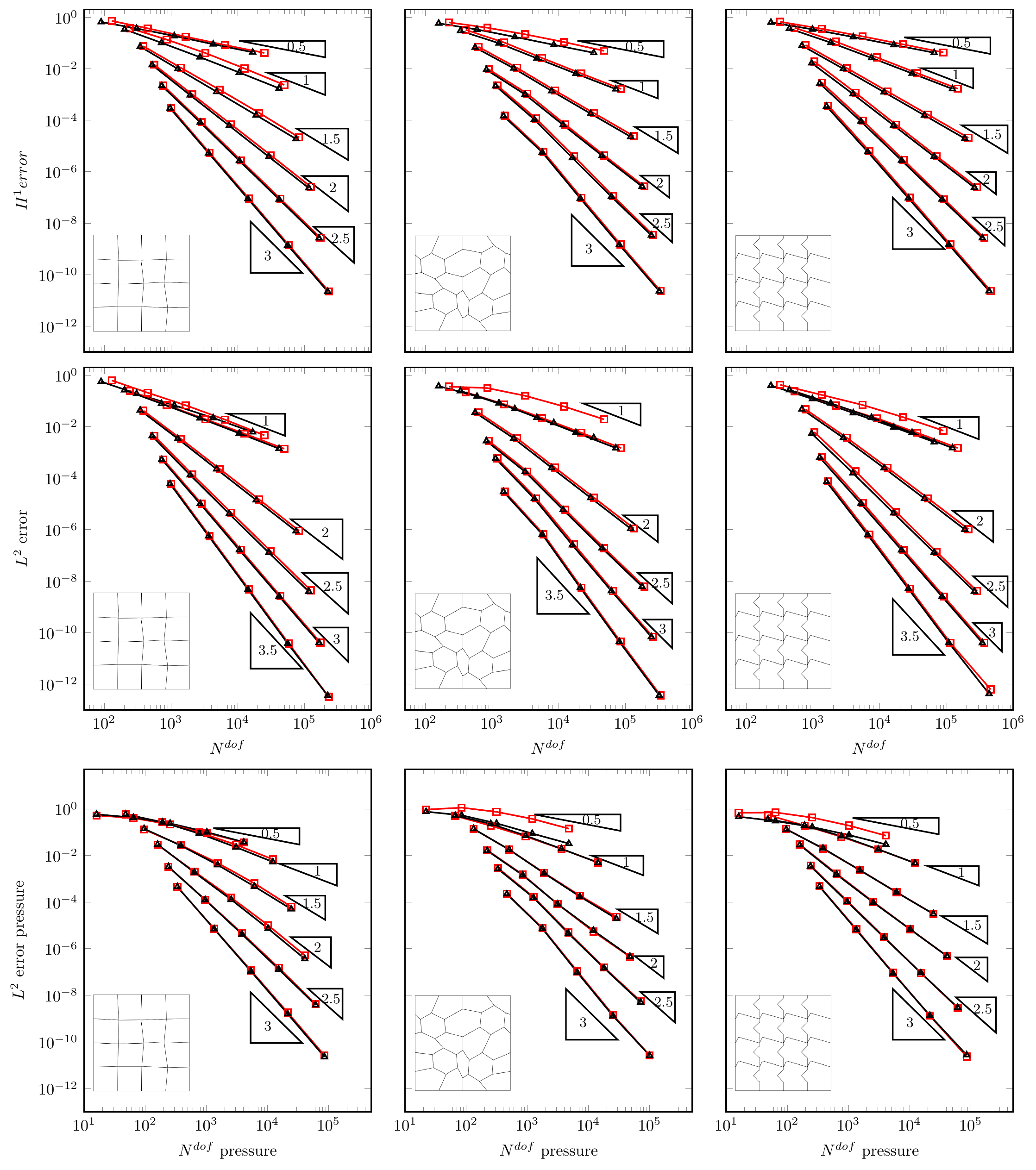} 
  \caption{ Error curves versus $N^{dof}$ for the velocity
    approximation using the energy norm~\eqref{eq:error:H1:velocity}
    (top panels) and the $\LTWO$-norm~\eqref{eq:error:L2:velocity}
    (mid panels), and for the pressure approximation using the
    $\LTWO$-norm~\eqref{eq:error:L2:pressure} (bottom panels).
    Solid (red) lines with square markers shows the errors for the
    first formulation using space~\eqref{eq:FO:regular-space:def};
    solid (black) lines with triangular markers shows the errors for
    the second formulation using
    space~\eqref{eq:FT:regular-space:def}.
    The right-hand side is approximated by using the projection
    operator $\Piz{\kb}$ with $\kb=max(0,k-2)$. 
    \RED{\textbf{????}}
    The mesh families used in each calculations are shown in the left
    corner of each panel and the expected convergence rates are
    reflected by the slopes of the triangles and corresponding numeric
    labels.}
  \label{fig:node_errorPi0km2}
\end{figure}

\begin{figure}
  \includegraphics[width=\textwidth,clip=]{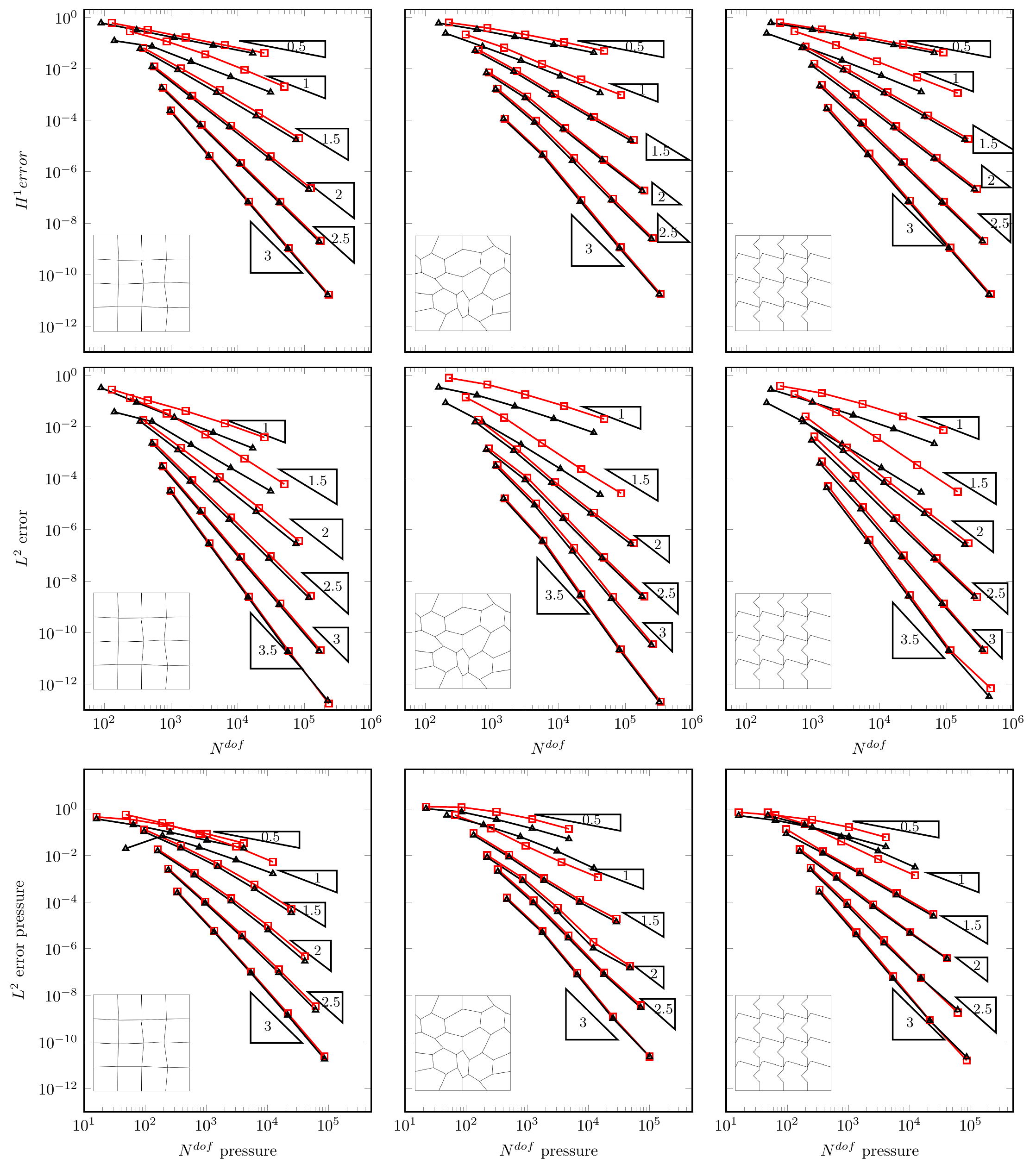} 
  \caption{ Error curves versus $N^{dof}$ for the velocity
    approximation using the energy norm~\eqref{eq:error:H1:velocity}
    (top panels) and the $\LTWO$-norm~\eqref{eq:error:L2:velocity}
    (mid panels), and for the pressure approximation using the
    $\LTWO$-norm~\eqref{eq:error:L2:pressure} (bottom panels).
    Solid (red) lines with square markers shows the errors for the
    first formulation using space~\eqref{eq:FO:regular-space:def};
    solid (black) lines with triangular markers shows the errors for
    the second formulation using
    space~\eqref{eq:FT:regular-space:def}.
    The right-hand side is approximated by using the projection
    operator $\Piz{k}$.
    The mesh families used in each calculations are shown in the left
    corner of each panel and the expected convergence rates are
    reflected by the slopes of the triangles and corresponding numeric
    labels.}
  \label{fig:node_errorPi0k}
\end{figure}

\fi

\begin{figure}
  \includegraphics[width=\textwidth,clip=]{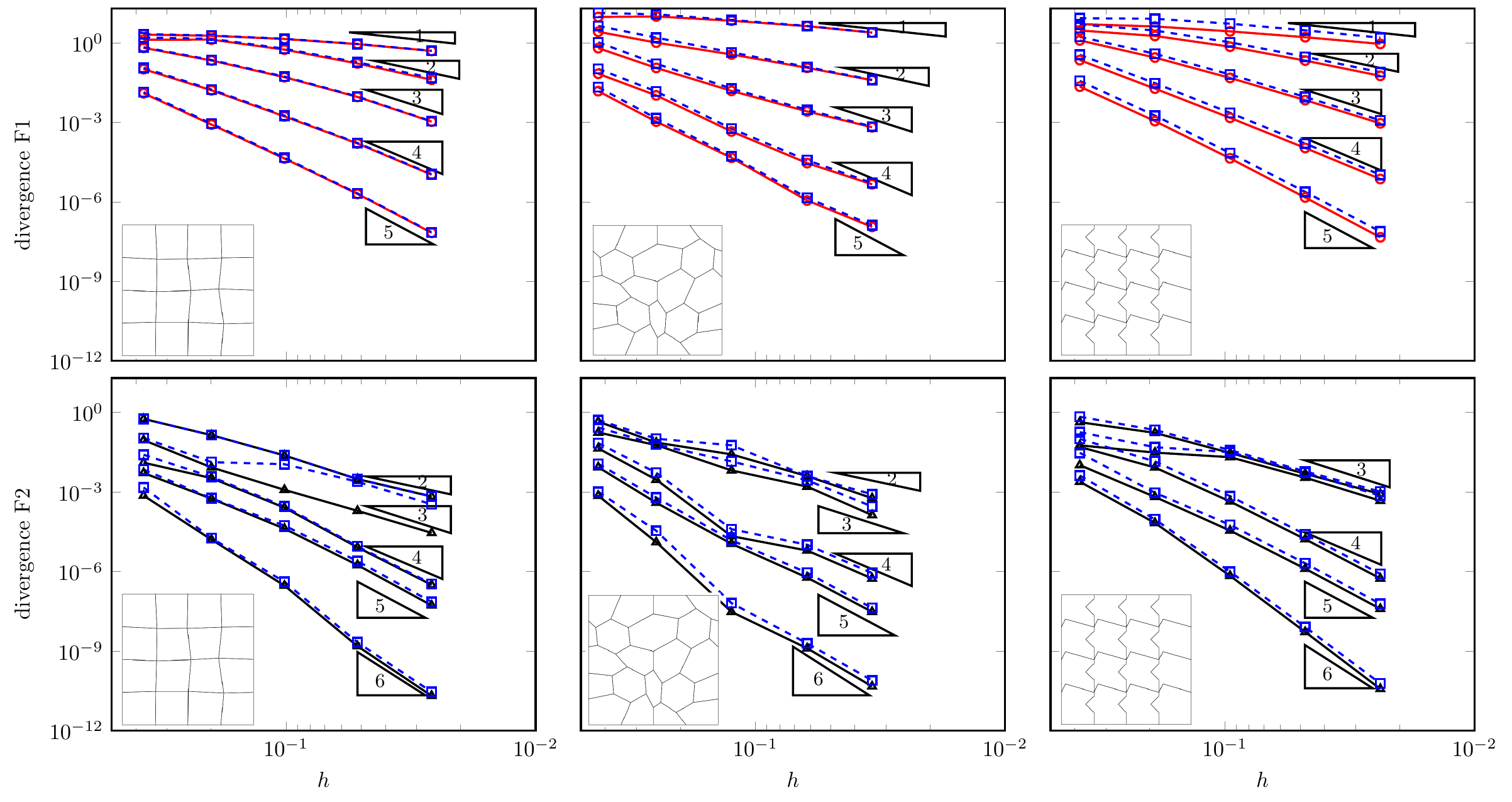} 
 \caption{ $\LTWO$-norm of the divergence of the velocity field using
   the non-enhanced virtual element
   space~\eqref{eq:FO:regular-space:def} (top panels) and the enhanced
   virtual element space space~\eqref{eq:FT:regular-space:def} (bottom
   panels).
   The right-hand side~\eqref{eq:fvh:def} is approximated by using the
   projection operator $\Piz{k}$.
   Solid (red and black) lines with square markers refer to
   $\PizP{k}(\DIV\uvh)$; dotted (blue) lines with circle markers refer
   to $\PizP{k+1}(\DIV\uvh)$.
   The mesh families used in each calculations are shown in the left
   corner of each panel and the expected convergence rates are
   reflected by the slopes of the triangles and corresponding numeric
   labels.}
  \label{fig:divergence}
\end{figure}
\begin{figure}
  \includegraphics[width=\textwidth,clip=]{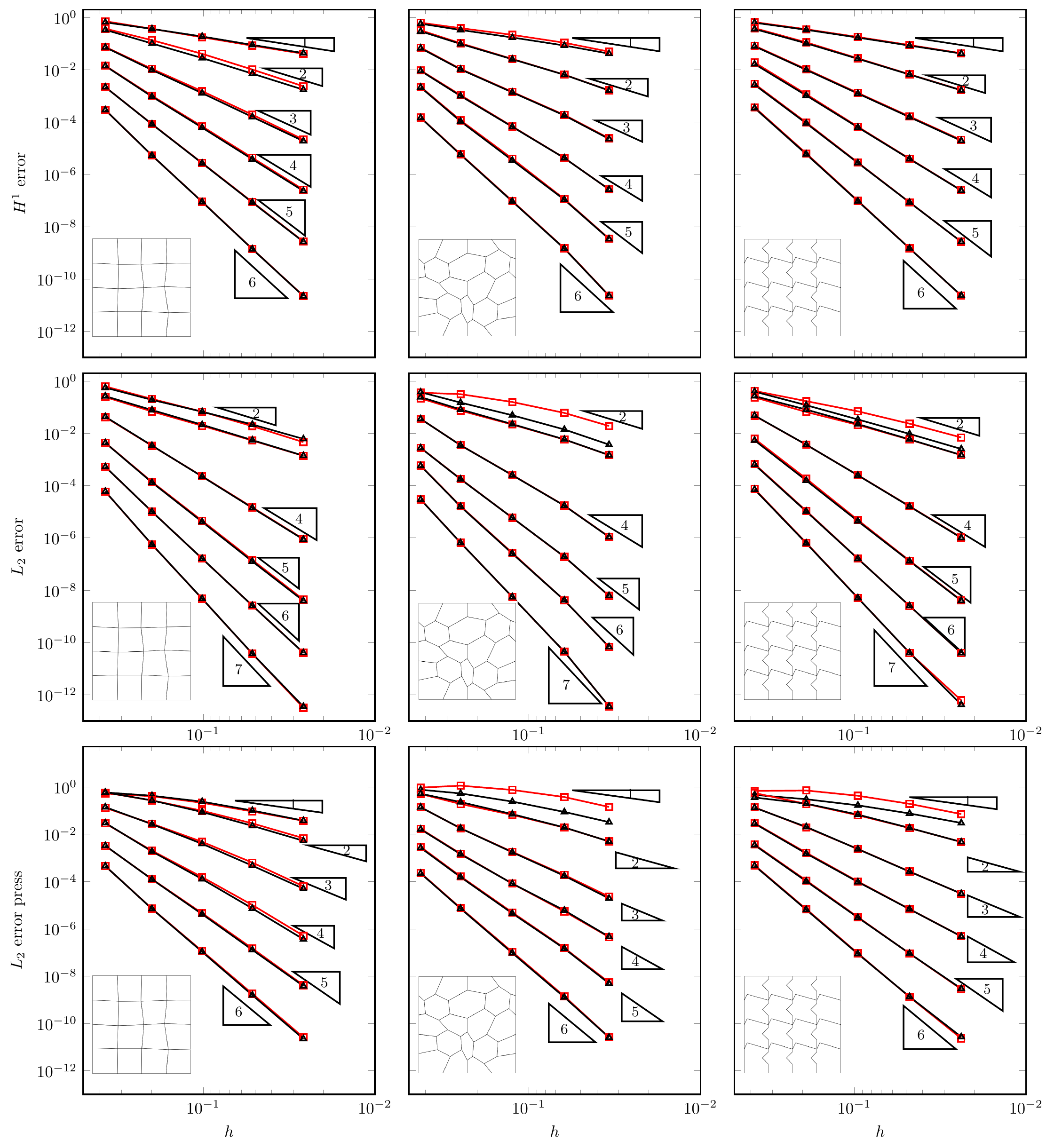} 
  \caption{ Error curves versus $\hh$ for the velocity approximation
    using the energy norm~\eqref{eq:error:H1:velocity} (top panels)
    and the $\LTWO$-norm~\eqref{eq:error:L2:velocity} (mid panels),
    and for the pressure approximation using the
    $\LTWO$-norm~\eqref{eq:error:L2:pressure} (bottom panels).
    Solid (red) lines with square markers show the errors for the
    first formulation using space~\eqref{eq:FO:regular-space:def};
    solid (black) lines with triangular markers show the errors for
    the second formulation using
    space~\eqref{eq:FT:regular-space:def}.
    The right-hand side is approximated by using the projection
    operator $\Piz{\kb}$ with $\kb=max(0,k-2)$. 
    The mesh families used in each calculations are shown in the left
    corner of each panel and the expected convergence rates are
    reflected by the slopes of the triangles and corresponding numeric
    labels.}
  \label{fig:h_errorPi0km2}
\end{figure}

\begin{figure}
  \includegraphics[width=\textwidth,clip=]{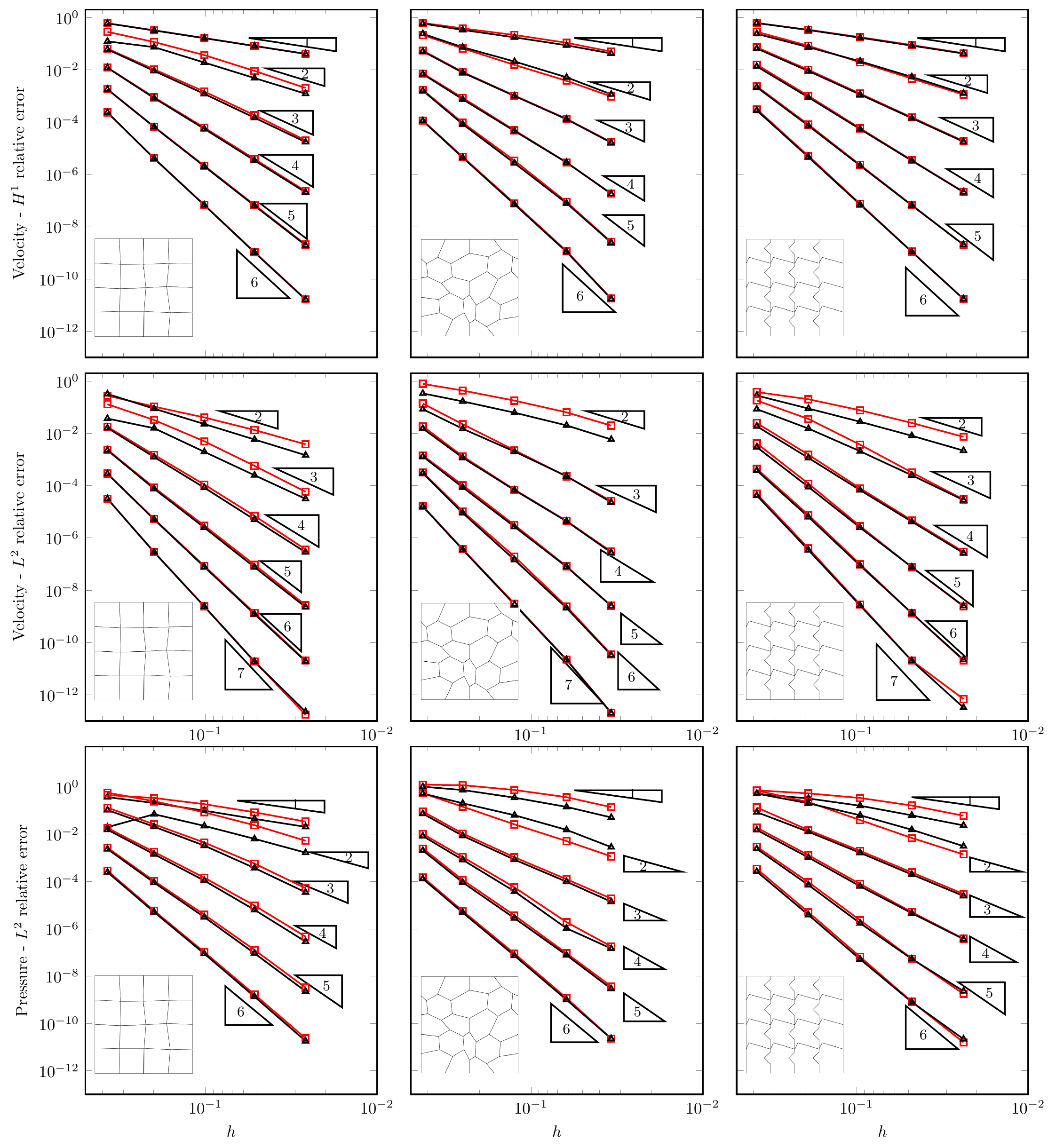} 
  \caption{ Error curves versus $\hh$ for the velocity
    approximation using the energy norm~\eqref{eq:error:H1:velocity}
    (top panels) and the $\LTWO$-norm~\eqref{eq:error:L2:velocity}
    (mid panels), and for the pressure approximation using the
    $\LTWO$-norm~\eqref{eq:error:L2:pressure} (bottom panels).
    Solid (red) lines with square markers show the errors for the first
    formulation using space~\eqref{eq:FO:regular-space:def};
    solid (black) lines with triangular markers show the errors for
    the second formulation using
    space~\eqref{eq:FT:regular-space:def}.
    The right-hand side is approximated by using the projection
    operator $\Piz{k}$.
    The mesh families used in each calculations are shown in the left
    corner of each panel and the expected convergence rates are
    reflected by the slopes of the triangles and corresponding numeric
    labels.}
  \label{fig:h_errorPi0k}
\end{figure}

\ifARXIV

\begin{figure}
  \includegraphics[width=\textwidth,clip=]{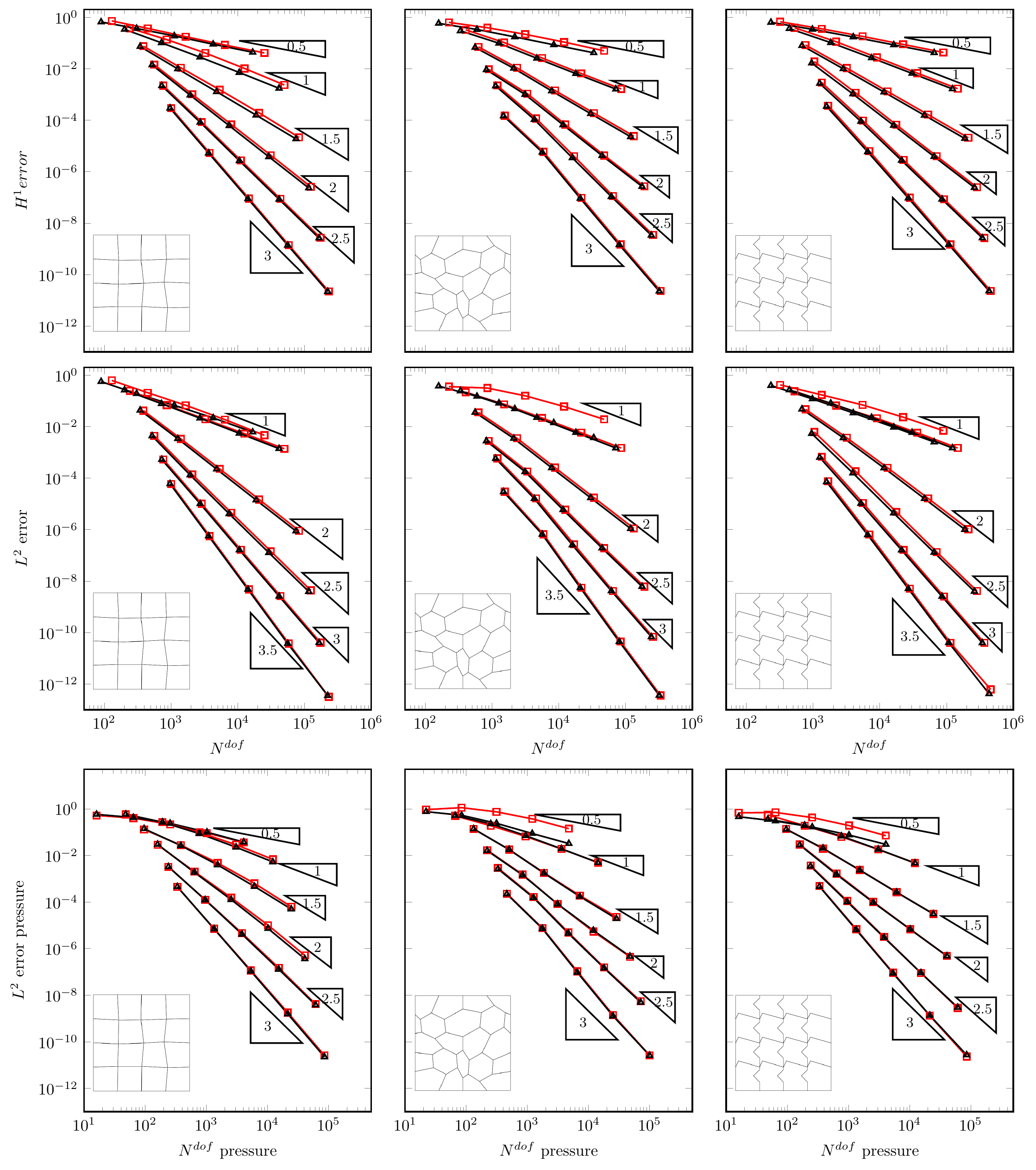} 
  \caption{ Error curves versus $N^{dof}$ for the velocity
    approximation using the energy norm~\eqref{eq:error:H1:velocity}
    (top panels) and the $\LTWO$-norm~\eqref{eq:error:L2:velocity}
    (mid panels), and for the pressure approximation using the
    $\LTWO$-norm~\eqref{eq:error:L2:pressure} (bottom panels).
    Solid (red) lines with square markers shows the errors for the
    first formulation using space~\eqref{eq:FO:regular-space:def};
    solid (black) lines with triangular markers shows the errors for
    the second formulation using
    space~\eqref{eq:FT:regular-space:def}.
    The right-hand side is approximated by using the projection
    operator $\Piz{\kb}$ with $\kb=max(0,k-2)$. 
    \RED{\textbf{????}}
    The mesh families used in each calculations are shown in the left
    corner of each panel and the expected convergence rates are
    reflected by the slopes of the triangles and corresponding numeric
    labels.}
  \label{fig:node_errorPi0km2}
\end{figure}

\begin{figure}
  \includegraphics[width=\textwidth,clip=]{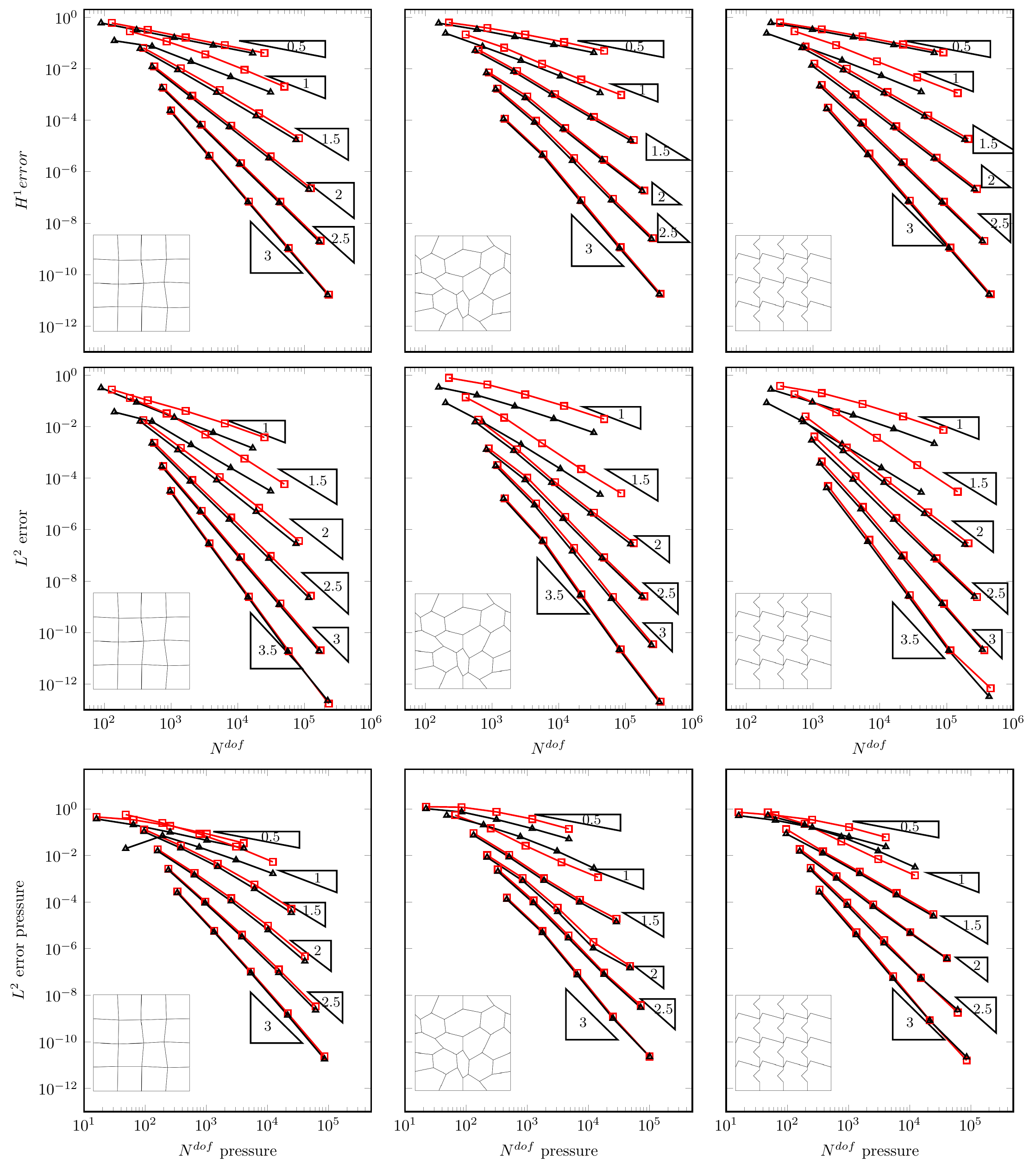} 
  \caption{ Error curves versus $N^{dof}$ for the velocity
    approximation using the energy norm~\eqref{eq:error:H1:velocity}
    (top panels) and the $\LTWO$-norm~\eqref{eq:error:L2:velocity}
    (mid panels), and for the pressure approximation using the
    $\LTWO$-norm~\eqref{eq:error:L2:pressure} (bottom panels).
    Solid (red) lines with square markers shows the errors for the
    first formulation using space~\eqref{eq:FO:regular-space:def};
    solid (black) lines with triangular markers shows the errors for
    the second formulation using
    space~\eqref{eq:FT:regular-space:def}.
    The right-hand side is approximated by using the projection
    operator $\Piz{k}$.
    The mesh families used in each calculations are shown in the left
    corner of each panel and the expected convergence rates are
    reflected by the slopes of the triangles and corresponding numeric
    labels.}
  \label{fig:node_errorPi0k}
\end{figure}

\fi

\begin{figure}
  \includegraphics[width=\textwidth,clip=]{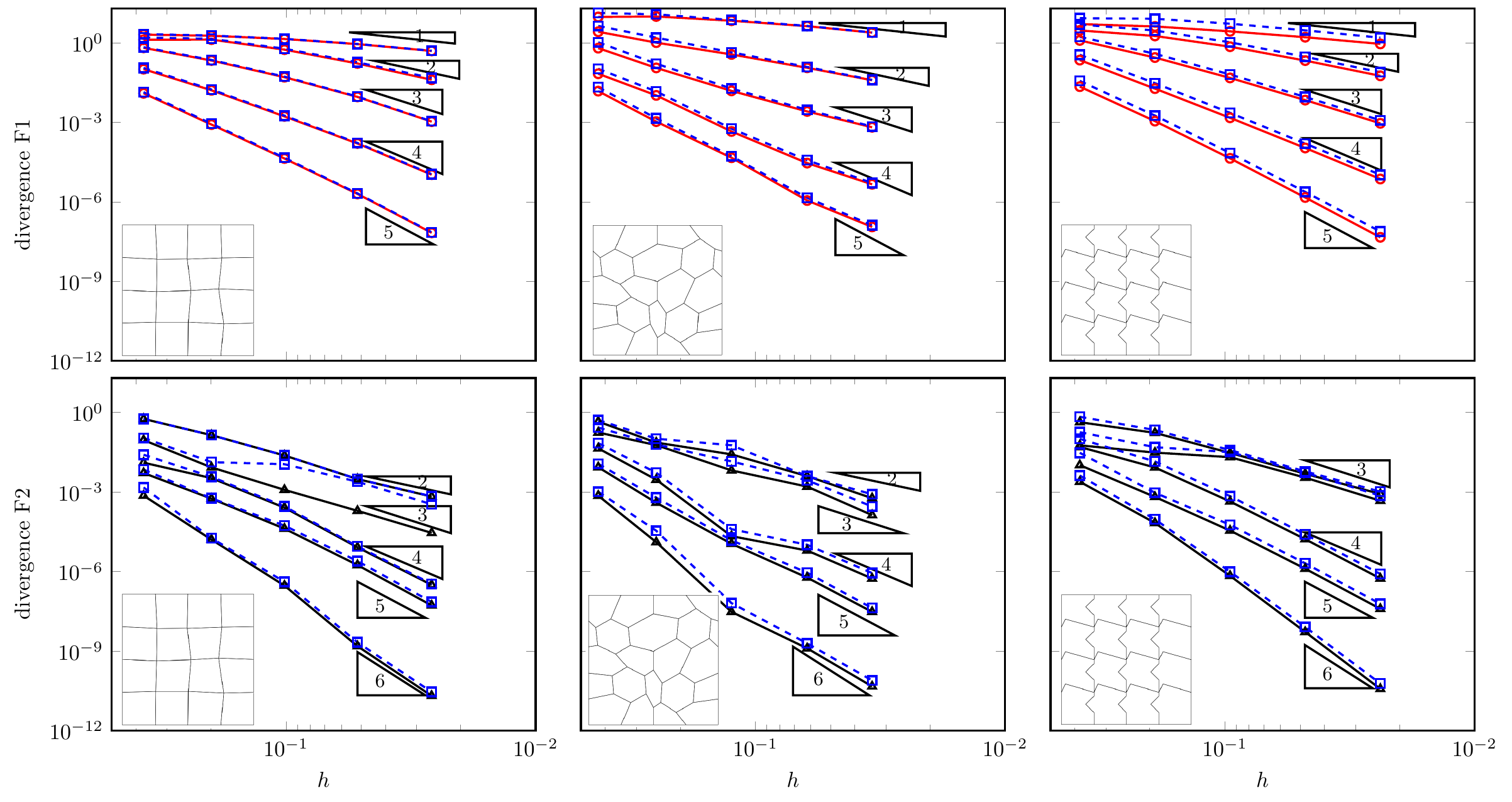} 
 \caption{ $\LTWO$-norm of the divergence of the velocity field using
   the non-enhanced virtual element
   space~\eqref{eq:FO:regular-space:def} (top panels) and the enhanced
   virtual element space space~\eqref{eq:FT:regular-space:def} (bottom
   panels).
   The right-hand side~\eqref{eq:fvh:def} is approximated by using the
   projection operator $\Piz{k}$.
   Solid (red and black) lines with square markers refer to
   $\PizP{k}(\DIV\uvh)$; dotted (blue) lines with circle markers refer
   to $\PizP{k+1}(\DIV\uvh)$.
   The mesh families used in each calculations are shown in the left
   corner of each panel and the expected convergence rates are
   reflected by the slopes of the triangles and corresponding numeric
   labels.}
  \label{fig:divergence}
\end{figure}

\PGRAPH{Convergence results}
\ifARXIV
In Figures~\ref{fig:h_errorPi0km2}, \ref{fig:h_errorPi0k},
\ref{fig:node_errorPi0km2}, and~\ref{fig:node_errorPi0k},
\else
In Figures~\ref{fig:h_errorPi0km2} and~\ref{fig:h_errorPi0k},
\fi
we compare the approximation errors \eqref{eq:error:H1:velocity},
\eqref{eq:error:L2:velocity}, and \eqref{eq:error:L2:pressure} that
are obtained when using the \emph{non-enhanced} and the
\emph{enhanced} definitions of the virtual element space for the
velocity approximation.
In particular, we recall that formulation~$\FO$ uses the space
definitions~\eqref{eq:FO:regular-space:def} (non-enhanced)
and~\eqref{eq:FO:enhanced-space:def} (enhanced); formulation~$\FT$
uses the space definitions~\eqref{eq:FT:regular-space:def}
(non-enhanced) and~\eqref{eq:FT:enhanced-space:def} (enhanced).
All error curves in Figures~\ref{fig:h_errorPi0km2}
and~\ref{fig:h_errorPi0k}, for $k=1,\ldots,6$ are shown in a log-log
plot versus the mesh size parameter $\hh$.
\ifARXIV
All error curves in Figures~\ref{fig:node_errorPi0km2}
and~\ref{fig:node_errorPi0k}, for $k=1,\ldots,6$ are shown in a
log-log plot versus the total number of degrees of freedom $N^{\footnotesize{\mbox{dof}}}$.
\fi
Solid (red) lines with square markers show the errors for the
formulation $\FO$; solid (black) lines with triangular markers show
the errors for the formulation $\FT$.
The mesh family is shown in the bottom-left corner and the slopes of
the error curves reflect the numerical order of convergence of each
scheme.

When the error on the velocity approximation is measured using the
energy norm, both formulations $\FO$ and $\FT$ provide the optimal
convergence rate, which scales as $\calO(\hh^{k})$ as expected from
Theorem~\ref{theorem:H1:estimate}, regardless of using the
non-enhanced or the enhanced versions of the method.
An optimal convergence rate, this time scaling like
$\calO(\hh^{k+1})$, is also visible for all the error curves of both
formulations in the $\LTWO$-norm as expected from
Theorem~\ref{theorem:L2:estimate} when using the enhanced definition
of the virtual element spaces and the projection operator $\Piz{\ks}$
in the right-hand side of the VEM.
Optimal convergence rates are also visible for both formulations $\FO$
and $\FT$ if $k\neq2$ when using the non-enhanced versions of the
virtual element spaces and the projection operator $\Piz{\kb}$ with
$\kb=max(0,k-2)$.
Instead, when $k=2$ the non-enhanced formulations $\FO$ and $\FT$
loose one order of convergence.
This fact is in agreement with the behavior previously noted
in~\cite{BeiraodaVeiga-Brezzi-Marini:2013}, where the optimal
convergence rate for $k=2$ was obtained by changing (in some sense,
``enhancing'') the construction of the right-hand side.
\ifARXIV
We also note that there is not a significant difference when we
compare the accuracy of the two formulation with respect to the number
of degrees of freedom, although we expect that formulation $\FT$ can
be more convenient than formulation $\FO$ as it has a smaller number of
degrees of freedom.
\fi

\PGRAPH{Free-divergence condition}
Regarding the approximation of the zero-divergence constraint,
the polynomial projection
$\PizP{k-1}(\DIV\uvh)$
is close to the machine precision in all elements $\P\in\Th$ for all
the formulations and meshes here considered.
Although we do not have a direct control on the divergence of the
virtual element approximation, a straightforward calculation using the
free-divergence condition for the ground truth, i.e., $\DIV\uv=0$, and
an application of Theorem~\ref{theorem:H1:estimate} yield
\begin{align*}
  \norm{\DIV\uvh}{0,\Omega}
  = \norm{\DIV(\uvh-\uv)}{0,\Omega}
  \leq \Cs\snorm{\uvh-\uv}{1,\Omega}
  \approx \mathcal{O}(\hh^{k}).
\end{align*}
So, we expect that the ``true'' divergence of the numerical
approximation $\uvh$ scales like $\mathcal{O}(\hh^{k})$ for
$\hh\to\mathbf{0}$.

Furthermore, we note that for both formulations $\FO$ and $\FT$ the
projections $\Piz{\ell}(\DIV\uvh)$, $\ell=k,k+1$, are computable from
the degrees of freedom of $\uvh$ when using the enhanced version of
the two spaces.
This fact allows us to post-process $\DIV\uvh$ and obtain the
polynomial projections $\PizP{k}(\DIV\uvh)$ and $\PizP{k+1}(\DIV\uvh)$
in every element $\P\in\Th$, which, in principle, could be better
approximations than $\PizP{k-1}(\DIV\uvh)$.
However, it is worth noting that $\PizP{k-1}(\DIV\uvh)$ is expected to
be zero (not considering rounding effects and the ill-conditioning of
the discretization) and a straightforward calculation using the
boundedness of $\Piz{\ell}$ and again the result of
Theorem~\ref{theorem:H1:estimate} shows that
\begin{align}
  \norm{\Piz{\ell}\DIV\uvh}{0,\Omega}
  =    \norm{\Piz{\ell}\DIV(\uvh-\uv)}{0,\Omega}
  \leq \Cs\norm{\DIV(\uvh-\uv)}{0,\Omega}
  \leq \Cs\snorm{\uvh-\uv}{1,\Omega}
  \approx \mathcal{O}(\hh^{k}),
  \label{eq:divg:k:rate}
\end{align}
where $\Cs\approx\norm{\Piz{\ell}}{}$.
So, we cannot expect a real gain by pursuing this route although this
estimate concerns with the worst case scenario and a convergence rate
to zero faster than $\mathcal{O}(\hh^{k})$ is still possible.
This effect is illustrated by the different error curves that are
obtained using the three mesh families $\MESH{1}$, $\MESH{2}$, and
$\MESH{3}$ and are shown in the log-log plots of
Figure~\ref{fig:divergence}.
In this figure, the three top panels are related to formulation $\FO$;
the solid (red) curves show the behavior of the $\LTWO$-norm of
$\Piz{k}(\DIV\uvh)$; the dotted (blue) curves show the behavior of the
$\LTWO$-norm of $\Piz{k+1}(\DIV\uvh)$.
Here, the deviation from zero looks decreasing like
$\mathcal{O}(\hh^k)$ in agreement with~\eqref{eq:divg:k:rate}.
The three bottom panels are related to formulation $\FT$;
the solid (black) curves show the behavior of the $\LTWO$-norm of
$\Piz{k}(\DIV\uvh)$; the dotted (blue) curves show the behavior of the
$\LTWO$-norm of $\Piz{k+1}(\DIV\uvh)$.
Here, the deviation from zero looks decreasing at a rate that is
closer to $\mathcal{O}(\hh^{k+1})$ for $k\neq2$ especially on mesh
families $\MESH{1}$ and $\MESH{3}$, and intermediate between $\hh^2$
and $\hh^3$ for $k=2$ when using mesh family $\MESH{2}$.


\section{Conclusions}
\label{sec:conclusions}

We studied two conforming virtual element formulations
for the numerical approximation of the Stokes
problem to unstructured meshes that work at any order of accuracy.
The components of the vector-valued unknown are approximated by using
variants of the conforming regular or enhanced virtual element spaces
that were originally introduced for the discretization of the Poisson
equation.
The scalar unknown is approximated by using discontinuous polynomials.
The stiffness bilinear form is approximated by using the orthogonal
polynomial projection of the gradients onto vector polynomials of
degree $k-1$ and adding a suitable stabilization term.
The zero divergence constraint is taken into account by projecting the
divergence equation onto the space of polynomials of degree $k-1$.
Our convergence analysis proves that the method is well-posed and
convergent and optimal convergence rates are obtained through error
estimates in the energy norm and in the $\LTWO$-norm.
Such optimal convergence rates are confirmed by numerical results on a
set of three different representative families of meshes.
These methods work well also in the lowest-order case (e.g., for the
polynomial order $k=1$) on triangular and square meshes,
which are well-known to be potentially unstable.
Moreover, our numerical experiments show that the divergence
constraint is satisfied at the machine precision level by the
orthogonal polynomial projection of the divergence of the approximate
velocity vector.


\section*{Acknowledgments}
GM was partially supported by the ERC Project CHANGE, which has
received funding from the European Research Council (ERC) under the
European Unions Horizon 2020 research and innovation programme (grant
agreement No 694515).






\end{document}